\documentclass[11pt]{article}
\usepackage[nottoc]{tocbibind}
\usepackage[T1]{fontenc}
\usepackage[utf8]{inputenc}
\usepackage{enumerate}
\usepackage{changes}
\usepackage{hyperref}
\usepackage{amsmath}
\usepackage{amsfonts}
\usepackage{amssymb}
\usepackage{amsthm}
\usepackage{newlfont}
\usepackage{mathtools}
\usepackage{esint}
\usepackage[top=1.2in, bottom=1.3in, left=1in, right=1in]{geometry}

\usepackage{authblk}
\usepackage{xcolor}
\usepackage{thmtools}
\usepackage{tikz}

\newcommand{\vertiii}[1]{{\left\vert\kern-0.25ex\left\vert\kern-0.25ex\left\vert #1 
    \right\vert\kern-0.25ex\right\vert\kern-0.25ex\right\vert}}

\theoremstyle{plain}
\newtheorem{theorem}{Theorem}[section]
\newtheorem{proposition}[theorem]{Proposition}
\newtheorem{corollary}[theorem]{Corollary}
\newtheorem{lemma}[theorem]{Lemma}

\theoremstyle{definition}
\newtheorem{definition}[theorem]{Definition}

\newtheorem{remark}[theorem]{Remark}

\newcommand{\RR}{\mathbb{R}}
\newcommand{\NN}{\mathbb{N}}
\newcommand{\ZZ}{\mathbb{Z}}

\newcommand{\cC}{\mathcal{C}}
\newcommand{\drm}{\mathrm{d}}
\newcommand{\euler}{\mathrm{e}}

\newcommand{\cE}{\mathcal{E}}

\DeclareMathOperator{\supp}{supp}

\DeclareMathOperator{\Deg}{Deg}

\DeclareMathOperator{\spec}{spec}

\DeclareMathOperator{\arsinh}{arsinh}

\DeclareMathOperator{\Eins}{\mathbf{1}}
\DeclareMathOperator{\diam}{diam}
\newcommand{\tvert}[1]{{\left\vert\kern-0.25ex\left\vert\kern-0.25ex\left\vert #1 
    \right\vert\kern-0.25ex\right\vert\kern-0.25ex\right\vert}}

\newcommand{\eat}[1]{}

\let\oldint\int
\renewcommand{\int}{\oldint\limits}

\newcommand{\Hmm}[1]{\leavevmode{\marginpar{\tiny%
			$\hbox to 0mm{\hspace*{-0.5mm}$\leftarrow$\hss}%
			\vcenter{\vrule depth 0.1mm height 0.1mm width \the\marginparwidth}%
			\hbox to
			0mm{\hss$\rightarrow$\hspace*{-0.5mm}}$\\\relax\raggedright #1}}}

\begin{document}

\title{Gaussian upper bounds for averaged heat semigroups on graphs
}
\author{Christian Rose\thanks{christian.rose\@ uni-potsdam.de. \\ Support by the DFG (grant no.:540199605) is gratefully acknowledged.}
}
\affil[]{Institut f\"ur Mathematik, Universit\"at Potsdam, 		14476  Potsdam, Germany}
\date{\today}
\maketitle

\begin{abstract}
Characterizations of pointwise Gaussian upper bounds on graphs with possibly unbounded geometry in terms of localized functional inequalities contain errors depending on the vertex degree. We introduce a new space-time averaged form of the heat semigroup in terms of time-averaged $\ell^{q/(q-1)}-\ell^q$-estimates and obtain Gaussian upper bounds from large-scale Faber-Krahn inequalities which avoid such errors. A main analytic ingredient is an integrated version of Davies' method which yields off-diagonal estimates for this averaged quantity, and which tends to pointwise Gaussian bounds as time tends to infinity. Conversely, on large scales volume doubling and Gaussian bounds on this averaged heat semigroup norms imply relative Faber-Krahn inequalities for subsets of prescribed relative measure. Our proof is based on a lower bound for Dirichlet eigenvalues in terms of these averages. The Faber-Krahn dimension is scale-dependent, but converges to the doubling dimension for increasing radii.
This gives a characterization of asymptotic Gaussian heat kernel behavior in terms of Faber-Krahn inequalities. 
\end{abstract}
{\small
\tableofcontents}
\section{Introduction and main results}

Gaussian upper bounds for heat kernels are fundamentally linked to the geometry of a metric measure space through functional inequalities involving the associated Laplace operator. 
A basic difficulty arises for the corresponding characterizations when the underlying space is a discrete graph with unbounded vertex degree: pointwise Gaussian estimates contain error terms involving the degree which are absent on large-scales.
In particular, they obstruct a degree-independent large-scale characterization of Gaussian behavior of the heat kernel.

This suggests to find characterizations of heat kernel upper bounds which are averaged both in space and in time. The resulting quantities should retain the off-diagonal Gaussian structure while allowing the local degree obstruction appearing on graphs to disappear.

Our purpose is to solve this problem on graphs by developing a theory on Gaussian bounds for averaged heat semigroups and to establish its connection with relative Faber-Krahn inequalities.

Conceptually, our main result is a large-scale characterization of Gaussian upper bounds on space-time averaged heat semigroups in terms of relative Faber-Krahn inequalities without degree-dependent errors. More precisely, Faber-Krahn yields such averaged bounds which asymptotically correspond to pointwise Gaussian upper bounds and generalized volume doubling. Conversely, averaged bounds and volume doubling yield a Faber-Krahn inequality for subsets with prescribed relative measure. The corresponding dimension depends on the scale, but converges to the doubling dimension at infinity. Thus, although the theory is formulated only at large scales, it recovers the classical Gaussian and Faber-Krahn pictures asymptotically.
\\

Classical theories focus on pointwise Gaussian bounds on heat kernels \cite{Varopoulos-85,Davies-87, CarlenKS-87,Davies-93, Grigoryan-94,Sturm-95,Carron-96,Delmotte-99,SaloffC-01,Grigoryan-09,BCS,CKKW-21,GrigoryanHH24,EB25}. In particular, on spaces with suitable cut-off functions, localized upper bounds are characterized by relative Faber-Krahn inequalities on all scales. Discrete graphs however lack good cut-off functions at small scales and require different arguments there. Recently, large-scale versions of the corresponding characterizations have recently been obtained for bounded degree graphs \cite{KellerR26,Rose24}.

For graphs with unbounded degree the situation changes substantially. Pointwise estimates generally retain degree-dependent correction terms \cite{KellerR26,Rose24}. In contrast, the Davies-Gaffney-Grigor'yan estimate gives universal bounds for spatial averages of the heat kernel which do not contain these local degree terms \cite{BauerHuaYau-17}.

The existing results therefore leave a gap between two types of estimates. Pointwise Gaussian bounds on heat kernels admit Faber-Krahn characterizations, but with degree-dependent correction terms; spatially averaged Davies-Gaffney-Grigor'yan estimates avoid these terms, but do not by themselves provide a corresponding Faber-Krahn characterization. 

Our main contribution is to bridge these two types of estimates. We introduce space-time averaged heat semigroup quantities which correspond asymptotically to the pointwise heat kernel. The averaging is not merely a technical relaxation but rather removes the degree-dependent errors in the Gaussian upper bounds while retaining the off-diagonal structure.

The proof of the correspondence with Faber-Krahn inequalities includes two new mechanisms which require genuinely different extensions of the existing heat-kernel machinery. In one direction, the classical Davies' trick to obtain off-diagonal heat kernel bounds based on $\ell^1-\ell^\infty$-semigroup estimates does not yield bounds on the averaged semigroup norms. We replace the trick by a version involving integrated  $\ell^{q/(q-1)}-\ell^q$-norm estimates. In the converse direction, the usual passage from heat-kernel bounds to Faber-Krahn inequalities relies on pointwise control of the heat kernel which are not available here. Instead, we find a new lower bound for Dirichlet eigenvalues in terms of averaged semigroup norms.
\\

In order to compare our results with the existing literature and to illustrate and quantify the problem, we briefly summarize three relevant bounds for the heat kernel $p$  for graphs $b$ with simple weights over the discrete measure space $(X,\Eins)$, i.e., $b(x,y)\in\{0,1\}$, $x,y\in X$. Equip the resulting graph with the intrinsic path metric $\rho$ given by the weights $ w(x,y)=(\deg(x)\vee\deg(y))^{-1/2} $, see Section~\ref{sec:general} for definitions.

As a special case, consider $X=\ZZ$ and the graph defined by the function $b(x,y)=1$ iff $|x-y|=1$, $x,y\in\ZZ$, and zero otherwise. An essentially exact estimate was obtained in \cite[Theorem~3.5]{Pang-93}:
\[
 p_t(x,y)\simeq \frac{1}{\sqrt{\# B_x(\sqrt {t})\# B_y(\sqrt {t})}}\euler^{-t\zeta\left({\rho}/{t}\right)}
\]
for $x,y\in\ZZ$, $t\geq \rho=\rho(x,y)>0$,
where 
\[
\zeta\big(x\big)
=
x\arsinh\left(x\right) 
+1-\sqrt{x^2+1}\]
for $x\geq 0$. 
Note that 
$t\zeta(\rho/t)\sim {\rho^2}/{2t}$
as $t\to \infty$ for all $\rho>0$. Here, $\simeq$ means that the left-hand side can be bounded above and below by the right-hand side up to constants, and $\sim$ that the left-hand side divided by the right-hand side converges to one. 

In the general case, if the vertex degree $\deg$ is uniformly bounded on $X$, an upper bound on the heat kernel similar to the right-hand side can be obtained for large times assuming large-scale Faber-Krahn or Sobolev inequalities \cite{KellerR26,Rose24}. However, if the vertex degree is not uniformly bounded,  the right-hand side has to be modified by a multiplicative term of the form
\[
(\deg(x)\vee\deg(y))^{\frac{1\vee \rho(x,y)}{\sqrt t}}
\]
Conversely, if a heat kernel upper bound of this form and a volume doubling property  hold on large scales, then the dimension of the resulting Faber-Krahn and Sobolev inequalities depend on the maximum of the diagonal of such terms in the ambient ball.

On the other hand, by the Davies-Gaffney-Grigor'yan estimate obtained in \cite{BauerHuaYau-17}, 
for all $A,B\subset X$ and $t\geq 0$, we have
\[
\frac{1}{\# A}\sum_{x\in A}\frac{1}{\# B}\sum_{y\in B}p_t(x,y)\leq \frac1{\sqrt{\# A\cdot \#B}}\euler^{-\Lambda t-t\zeta(\rho(A,B)/t)},
\]
where $\Lambda=\inf\spec(\Delta)$ denotes the spectral bottom of $\Delta$. These bounds do not involve errors depending on $\deg$.

\subsection{Averaged heat kernel and main results}\label{sec:main}

In this section we present global versions of the main results of this article. Let $b$ be a graph over the discrete measure space $(X,m)$ with Dirichlet form $\cE$, associated Laplace operator $\Delta\geq0$, and the heat kernel $p$. Assume that the graph with the intrinsic path metric $\rho$, cf.~ Section~\ref{sec:general}.

For the sake of readability, we identify any set $B\subset X$ with its characteristic function~$\Eins_B$, i.e.,
\[
B:=\Eins_B.
\] Further, for a measurable function $f\colon I\to\RR$ and a set $I\subset \RR$ of finite Lebesgue measure $|I|$, we let
\[
\fint_If:=\frac{1}{|I|}\int_If(x)\drm x.
\]
If $A\colon \ell^q(X,m)\to\ell^r(X,m)$ is a linear operator, its operator norm will be denoted by $\|A\|_{p,q}$, for $q,r\in[1,\infty]$. 
\\

The precise averaged quantity which replaces the pointwise heat kernel is chosen to balance three requirements: it must retain the Gaussian off-diagonal structure, it must average over a spatial scales, and it must converge to the pointwise regime as time increases.

Therefore, we define the (space-time) averaged heat kernel to be the function
\[
\mathcal{P}_r(t,x,y)
:=
\left(
\fint_{I(t,r,x,y)} \| {B_x(r)}P_{\tau} {B_y(r)}\|_{\frac{2q(t)}{2q(t)-1},2q(t)}^{q(t)}\drm \tau
\right)^\frac{1}{q(t)}
\]
where 
\[
I(t,r,x,y)=\left[{t-\frac{r^2}{1\vee t\sigma_{xy}}},{t+\frac{r^2}{1\vee t\sigma_{xy}}}\right]
\]
and 
\[t\sigma_{xy}=\sqrt{\rho(B_x(r),B_y(r))^2S^2+t^2}-t,\qquad q(t)=\frac{1}{21n}\sqrt[2n]{\frac{t}{S^2}}.\]

Our first main theorem shows that relative Faber-Krahn inequalities on large scales are sufficient for averaged Gaussian upper bounds. The estimate is formulated using the distance between balls rather than their centers and contains correction terms depending on the smallest scale at which the Faber-Krahn inequality is assumed. Crucially, these corrections involve ball measures rather than the vertex degree. As a side effect, the theorem yields a corresponding large-scale volume comparison estimate.

\begin{theorem}\label{thm:FK_hk_global}
Let  $C,r_0>0$ and $n>2$ and assume that, for all $B(r)\subset  X$, $r\geq r_0$, and all $U\subset B(r)$, we have the Faber-Krahn inequality
\begin{equation*}
\lambda(U):=
\inf_{0\neq f\in\mathcal{C}_c(U)}
\frac{\cE(f)}{\|f\|_2^2}\geq \frac{C}{r^2}\left(\frac{m(B(r))}{m(U)}\right)^\frac{2}{n}.
\end{equation*}
Then there are constants $C'=C'(C,n)>0$ and $r'=r'(r,S)>0$ such that,
for all $x,y\in X$ and $t\geq r^2\geq r'^2$, we have
\begin{multline*}
\mathcal{P}_r(t,x,y)
 \leq 
\frac{C'}{\sqrt[2q(t)]{m(B_{x}(r))m(B_{y}(r))}}\\
\cdot 
\frac{\left(1\vee S^{-2}\sqrt{\rho(B_x(r),B_y(r))^2S^2+t^2}-t\right)^{\frac{n}{2}}}{\sqrt[\frac{2q(t)}{q(t)-1}]{m(B_x(\sqrt t))m(B_y(\sqrt t))}}
 \euler^{-S^{-2}t\zeta(\rho(B_x(r),B_y(r))S/t)}.
\end{multline*}
Moreover, for all $x\in X$ and $r_0\leq r_1\leq r_2$, we have
\begin{equation*}
m(B_x(r_2))\leq C'\left(\frac{m(B_x(r_2))}{m(B_x(r_0))}\right)^{\sqrt[4n]{{S}/{r_1}}}
\left(\frac{r_2}{r_1}\right)^{n} m(B_x(r_1)).
\end{equation*}
\end{theorem}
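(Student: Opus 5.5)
We will prove Theorem~\ref{thm:FK_hk_global} in three steps: first an on-diagonal bound for truncated semigroups, then an integrated Davies perturbation to get the Gaussian factor, and finally an optimization in the auxiliary parameters that produces $q(t)$ and the volume comparison.

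\textbf{Step 1: on-diagonal bounds.} From the Faber-Krahn inequality on $B(r)$, $r\geq r_0$, we derive a Nash-type inequality for functions supported in $B_x(R)$, $R\ge r$. The standard route is the Grigor'yan level-set argument: split $f$ into $(f-s)_+$ and the remainder, with $s$ chosen via $\|f\|_1/m(B_x(R))$. This gives, for the Dirichlet semigroup $P^{B_x(R)}_\tau$,
\[
\|P^{B_x(R)}_\tau\|_{1,\infty}\lesssim m(B_x(R))^{-1}(R^2/\tau)^{n/2}
\]
in the range $r_0^2\lesssim\tau\le R^2$. Since Faber-Krahn only holds above scale $r_0$, the Moser/Nash iteration cannot be run to $\ell^\infty$ without degree terms. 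We therefore stop at a finite exponent. Riesz-Thorin between the $\ell^2$ contraction and $\ell^{2}\to\ell^{2q}$ bounds obtained by iterating the Nash inequality about $\log q$ times gives
\[
\|P_\tau\|_{\frac{2q}{2q-1},2q}\lesssim m(B)^{-(1-1/q)}(R^2/\tau)^{n(1-1/q)/2}.
\]
The constraint that the iteration only uses scales $\ge r_0$, with losses of order $q^{2n}S^2/\tau$, forces $q\lesssim (t/S^2)^{1/2n}/(21n)$. This is where the definition of $q(t)$ comes from. The residual factor $m(B_x(r))^{-1/2q}$ comes from the localization onto $B_x(r)$ at the smallest scale.

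\textbf{Step 2: integrated Davies' method.} Take the weight $\psi=\alpha\rho(\cdot,B_y(r))\wedge L$ and the twisted semigroup $\euler^{\psi}P_\tau\euler^{-\psi}$. On graphs the discrete form estimate reads
\[
\cE(\euler^{\psi}f,\euler^{-\psi}f)\ge\cE(f)-(\cosh(\alpha S)-1)S^{-2}\|f\|_2^2,
\]
up to normalization by the intrinsic metric. We do not control the $\ell^q$ derivatives pointwise in time. Instead we differentiate $\|\euler^{\psi}P_\tau f\|_{2q}^{2q}$ and integrate over $\tau\in I(t,r,x,y)$. The error terms then become time-integrated Dirichlet energies, and these are absorbed by the Step~1 bounds applied with exponent $q$. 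This yields a bound of the form
\[
\mathcal P_r\lesssim(\text{Step 1 bound})\cdot\exp\bigl(-\alpha\rho(B_x(r),B_y(r))+t S^{-2}(\cosh(\alpha S)-1)\bigr).
\]
Optimizing in $\alpha$ gives $\alpha S=\arsinh(\rho S/t)$, and hence the exponent $-S^{-2}t\zeta(\rho S/t)$. The length of $I$, namely $r^2/(1\vee t\sigma_{xy})$, is tuned so that the averaging window matches the scale at which $\cosh(\alpha S)$ grows. The polynomial factor $(1\vee S^{-2}\sqrt{\cdots}-t)^{n/2}$ comes from the loss incurred when passing from $\tau$ to the endpoints of the window. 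Splitting $(2q)^{-1}+(2q/(q-1))^{-1}$ between the two balls, via Cauchy-Schwarz in the symmetric operator $B_xP_\tau B_y$, produces the two displayed volume powers.

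\textbf{Step 3: volume comparison.} Apply Faber-Krahn with $U=B_x(r_1)\subset B_x(r_2)$ and test against a Lipschitz cutoff $\phi=(r_1-\rho(x,\cdot))_+$. This gives $\lambda(U)\lesssim r_1^{-2}$, which would yield $m(B(r_2))/m(B(r_1))\lesssim(r_2/r_1)^n$ if the cutoff had good energy. On graphs the energy of $\phi$ is only controlled up to jump size $S$ at scale $r_0$. We therefore instead combine the Step~1 on-diagonal bound with the conservativeness-type lower bound $\|B_xP_\tau B_x\|\gtrsim m(B_x(r_1))^{-1}$ for $\tau\approx r_1^2$, which follows from Davies-Gaffney. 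The mismatch of exponents $1-1/q$ with $q\approx(r_1/S)^{1/2n}$ produces the correction factor $\bigl(m(B_x(r_2))/m(B_x(r_0))\bigr)^{\sqrt[4n]{S/r_1}}$.

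The main obstacle is Step~2. Making the twisted $\ell^{2q}$ energy estimates close requires the time averaging and a Stein-type control of the non-symmetric error terms. This is where the constant $21n$ and the threshold $r'$ must be tuned.
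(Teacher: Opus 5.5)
Your truncated-distance weight and the optimization $\alpha S=\arsinh(\rho S/t)$ agree with the paper. However, Step~2 is missing the mechanism that carries the $\ell^2$ rate $S^{-2}(\cosh(\alpha S)-1)$ over to $\ell^{2q}$, and the natural way to implement it fails. If you differentiate $\|\euler^{\psi}P_\tau f\|_{2q}^{2q}$ and run the energy method, the rate deteriorates with $q$. Already in $\RR^n$ one only gets $\|\euler^{\psi}u_\tau\|_{2q}\le \euler^{\frac{q^2}{2q-1}\alpha^2\tau}\|\euler^{\psi}u_0\|_{2q}$, a rate of order $q\alpha^2$ instead of $\alpha^2$. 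After optimizing in $\alpha$, the Gaussian exponent is then divided by a factor of order $q(t)\to\infty$. Time averaging does not change a rate, and ``absorbing the errors into Step~1'' is not an argument. The paper never twists in $\ell^{2q}$. Theorem~\ref{lem:integratedDavies} factors $P_{2\tau}=P_\tau P_\tau$ through $\ell^2$,
\[
\|VP_{2\tau}W\|_{q',q}\le \sup_{\supp V}\euler^{-\omega}\sup_{\supp W}\euler^{\omega}\,\|VP^\omega_\tau\|_{2,q}\,\|WP^{-\omega}_\tau\|_{2,q},
\]
followed by Cauchy--Schwarz in $\tau$. Each $\ell^2\to\ell^{2q}$ factor is bounded by the parabolic Moser estimate for the twisted solution $P^\omega_\tau f$ (Theorem~\ref{thm:subsolution}). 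This costs only $\delta^{-n/2}(1+\delta r^2h(\omega))^{n/2+1}$ times the time average of $\|P^\omega_\tau\|_{2,2}^2$. That average is in turn controlled by the sharp $\ell^2$ integrated maximum principle of Bauer--Hua--Yau (Proposition~\ref{prop:BHY}), with $h(\omega)\le\gamma(\kappa S)$. So the exponential factor comes only from $\ell^2$, and the twist enters the $\ell^{2q}$ step only through a polynomial factor in $h(\omega)$. The choice $\delta=\tfrac12\wedge(t\sigma)^{-1}$ in Lemma~\ref{lem:abstractbound2} then fixes the window length $r^2/(1\vee t\sigma)$. It also produces the factor $(1\vee t\sigma)^{n/2}$ as $\delta^{-n/2}$, not as a loss from passing to the endpoints of the window.

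Steps~1 and~3 have further gaps. In Step~1, Dirichlet bounds for $P^{B_x(R)}_\tau$ plus Riesz--Thorin control the Dirichlet semigroup, not $B_x(r)P_\tau B_y(r)$ or its twisted version, and you give no localization. What is needed is a mean value inequality in space and time for non-negative $\Delta_\omega$-subsolutions (Proposition~\ref{prop:moser}). On a graph each Moser step needs a shell of width $\gtrsim S$, because cutoff gradients spread by the jump size. So only about $\log(r/S)$ steps are available, and one reaches $w(r)=(r/8S)^{1/n}$. The cap on $q$ therefore comes from $S$, not from $r_0$. The constant $21n$ is chosen so that the doubling correction $\Phi_x^{2n}$ (exponent $12n(S/r)^{1/n}$) and the factor with exponent $1/(3q)$ fit into $1/q=21n(S/r)^{1/n}$.

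In Step~3 the logical order is reversed. The Moser estimate uses volume doubling as input, so the paper derives doubling first and directly from Faber--Krahn (Propositions~\ref{prop:FKS}, \ref{lem:asnoncoll}, \ref{prop:doubling}). It passes to Sobolev, hence Nash, and tests this on cutoffs $f_r$ at dyadic scales $r,r/2,\dots$ down to about $4S$. For $r\ge 4S$ these cutoffs have bounded gradient supported in $B_x(r)$, so your objection to cutoffs is unfounded. The correction exponent of order $(S/r)^{1/n}$ records the finite number $\approx\log_2(r/S)$ of dyadic steps, not a mismatch between $1-1/q$ and $1$.

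Obtaining doubling from your heat kernel upper bound would be circular. Moreover, the lower bound $\|B_xP_\tau B_x\|\gtrsim m(B_x(r_1))^{-1}$ requires mass retention $\langle B_x(r_1),P_\tau B_x(r_1)\rangle\gtrsim m(B_x(r_1))$, which Davies--Gaffney alone does not give. You would also need stochastic completeness and control of $m(B_x(2^kr_1))$ against $m(B_x(r_1))$, which is essentially the doubling you are trying to prove.
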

These estimates converge, as time increases, to pointwise Gaussian upper bounds while retaining the spatial averaging that removes the degree-dependent errors.

We refer to the upper upper bound on $\mathcal{P}$ obtained in Theorem~\ref{thm:FK_hk_global} as averaged Gaussian upper bounds on the heat kernel. 

The estimate has the expected Gaussian interpretation: we have $2q(t)\to\infty$ and $2q(t)/(q(t)-1)\to 2$ as $t\to\infty$, while the averaging interval converges to the natural time window of length $r^2$. Thus the averaged bound approaches the pointwise Gaussian scale at large times, but without the degree-dependent errors present in the pointwise theory.

\begin{remark}[Behavior of $\mathcal{P}$]Since the function $q$ is strictly monotone increasing and unbounded, for $x,y\in X$ and $r,\tau>0$, the Dunford-Pettis theorem implies
\[
\|{B_x(r)}P_\tau{B_y(r)}\|_{\frac{q(t)}{q(t)-1},q(t)}\to\|{B_x(r)}P_\tau{B_y(r)}\|_{1,\infty}=\sup_{B_x(r)\times B_y(r)}p_\tau 
\]
as $t\to\infty$. Further, if $I$ is an interval of finite length $|I|<\infty$, then we have
\[
\left(\fint_I|f|^{q(t)}\right)^\frac{1}{q(t)}\to \|f\|_{\infty}.
\]
Additionally, $t\sigma_{xy}\to 0$ if $t\to \infty$, such that $I(t,r,x,y)=[t-r^2,t+r^2] $ for $t$ large enough. Hence
\[
\mathcal{P}_r(t,x,y)\sim \sup_{[t-r^2,t+r^2]\times B_x(r)\times 
B_y(r)}p\]
as $t\to \infty$.
\end{remark}

The converse is more subtle. Averaged Gaussian bounds do not directly give a pointwise estimate at arbitrarily small times, such that the classical argument relating the maximum of the heat kernel to the first Dirichlet eigenvalue cannot be applied without modification. We prove instead that averaged semigroup bounds control the first Dirichlet eigenvalue and combine this estimate with large-scale volume doubling.

\begin{theorem}\label{thm:hk_FK_global}
Let $n,r_0,C>0$. Assume that,
for all $x,y\in X$ and $t\geq r^2\geq r_0^2$, we have
\[
\mathcal{P}_r(t,x,y)
 \leq 
C 
\frac{\left(1\vee S^{-2}\sqrt{\rho(B_x(r),B_y(r))^2S^2+t^2}-t\right)^{\frac{n}{2}}}{\sqrt[\frac{2q(t)}{q(t)-1}]{m(B_x(\sqrt t))m(B_y(\sqrt t))}}
 \euler^{-S^{-2}t\zeta(\rho(B_x(r),B_y(r))S/t)}
\]
and  that,
for all $x\in X$ and $r_0\leq r_1\leq r_2$, we have
\begin{equation*}
m(B_x(r_2))\leq C
\left(\frac{r_2}{r_1}\right)^{n} m(B_x(r_1)).
\end{equation*}
Then there is a constant $C'=C'(C,n)>0$ such that, for all $\varepsilon\in[0,1)$, $r\geq 1\vee r_0$, and $U\subset B(r)$ satisfying $m(U)\geq \varepsilon m(B(r))$, we have 
\[
\lambda(U)
\geq 
\frac{C'}{r^{2+\varepsilon}}
\left(
\frac{m(B(r))}{m(U)}\right)^{\frac{2}{N}} \left[1\wedge \frac{1}{m(B(r))}\right] ^{\frac{1}{C'q(r^2)}}
\]
 where 
\[
N(r)= n\cdot \frac{q(r^2)}{q(r^2)-1}+ \frac{1}{\varepsilon}\ln \frac{1}{\varepsilon}\cdot \frac{1}{\ln r}+\frac{1}{q(r^2)-1}\cdot \ln(1\vee m(B(r))).
\]
\end{theorem}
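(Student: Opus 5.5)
The approach is to bound $\lambda(U)$ below through a Dirichlet eigenfunction tested against the averaged semigroup. This replaces the classical ``maximum of the heat kernel bounds the eigenvalue'' argument.

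Fix $U\subset B=B_x(r)$ with $m(U)\ge\varepsilon m(B)$. Let $\phi\ge0$ be the first Dirichlet eigenfunction of $\Delta_U$, normalized by $\|\phi\|_2=1$. By domain monotonicity, $P_\tau\phi\ge P^U_\tau\phi=\euler^{-\lambda(U)\tau}\phi$ on $U$. Hence, for every $\tau$,
\[
\euler^{-\lambda(U)\tau}=\langle \phi,P^U_\tau\phi\rangle\le \langle B\phi,P_\tau B\phi\rangle\le \|BP_\tau B\|_{\frac{2q}{2q-1},2q}\,\|\phi\|_{\frac{2q}{2q-1}}^2 ,
\]
where $q=q(t)$ and $2q$ is conjugate to $2q/(2q-1)$. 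Hölder's inequality on $U$ gives $\|\phi\|_{2q/(2q-1)}^2\le m(U)^{\frac{2q-1}{q}-1}=m(U)^{1-1/q}$.

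Next I average in time. Take the $q$-th power and average over $\tau\in I(t,r,x,x)=[t-r^2/(1\vee 0),\,t+r^2]$; the diagonal case has $\sigma_{xx}=0$. This yields
\[
\euler^{-\lambda(U)(t+r^2)}\le \mathcal P_r(t,x,x)\,m(U)^{1-1/q}.
\]
The hypothesis with $x=y$ (distance $0$, $\zeta(0)=0$) bounds $\mathcal P_r(t,x,x)\le C\,m(B_x(\sqrt t))^{-(q-1)/q}$. Combining these,
\[
\lambda(U)\ge \frac{1}{t+r^2}\Bigl[\tfrac{q-1}{q}\ln\frac{m(B_x(\sqrt t))}{m(U)}-\ln C\Bigr].
\]

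Now choose $t$. Volume doubling gives $m(B_x(\sqrt t))\ge C^{-1}(\sqrt t/r)^n m(B)$, so the bracket is at least
\[
\tfrac{q-1}{q}\Bigl[\tfrac n2\ln\tfrac{t}{r^2}+\ln\tfrac{m(B)}{m(U)}\Bigr]-C''.
\]
Pick $t=r^2 K$ with
\[
K=\Bigl(C_1\,\tfrac{m(B)}{m(U)}\Bigr)^{2/N'},
\]
where $N'$ is close to $nq/(q-1)$. Then the bracket is $\gtrsim 1$ and $\lambda(U)\gtrsim r^{-2}K^{-1}$, which has the Faber-Krahn shape $r^{-2}(m(B)/m(U))^{-2/N}$.

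That sign is wrong, so I must redo the bookkeeping. Following the classical scheme, one instead takes $t\simeq r^2(m(U)/m(B))^{2/n}$, i.e.\ $t\le r^2$, where the hypothesis is not available. This is the main obstacle: the hypothesis only holds for $t\ge r^2$, so small $U$ cannot be handled by shrinking time. This is exactly why the statement restricts to $m(U)\ge\varepsilon m(B)$ and loses $r^{-\varepsilon}$. I would take $t=r^2$ (or $t=r^{2+\varepsilon}$) and estimate $m(B)/m(U)\le 1/\varepsilon$. The factor $(m(B)/m(U))^{2/N}$ is then at most $\varepsilon^{-2/N}$, and this is absorbed by the term $\frac1\varepsilon\ln\frac1\varepsilon\cdot\frac1{\ln r}$ in $N$: it is designed so that $(1/\varepsilon)^{2/N}\le r^{\varepsilon}$-type factors balance. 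The remaining mismatch between the exponents $1-1/q$ on $m(U)$ and $(q-1)/q$ on $m(B_x(\sqrt t))$, with the constants, produces the factor $m(B)^{-1/(C'q)}$. It also produces the contribution $\frac{1}{q-1}\ln(1\vee m(B))$ to $N$, via $m(B)^{1/q}=\exp(\ln m(B)/q)$ rewritten as a power of $m(B)/m(U)$.

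Concretely, the final step solves $\lambda(U)(t+r^2)\ge \frac{q-1}{q}\ln(\cdots)-\ln C$ at $t=r^{2+\varepsilon}$. If the logarithm is below $2\ln C$, I use instead the trivial estimate $\euler^{-x}\le$ something and the exponent bookkeeping. I would then check that in both cases
\[
\lambda(U)\ge C' r^{-2-\varepsilon}\,(m(B)/m(U))^{2/N}\,[1\wedge m(B)^{-1}]^{1/(C'q)},
\]
by comparing logarithms of both sides and using the explicit form of $N(r)$.
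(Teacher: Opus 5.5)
Your eigenvalue step is correct. It is the paper's Theorem~\ref{lem:ev_bound_abstract1}, in a cleaner form: with $\|\phi\|_2=1$ you get $\euler^{-\lambda(U)(t+r^2)}\le m(U)^{1-1/q}\mathcal{P}_r(t,x,x)\le C\,(m(U)/m(B_x(\sqrt t)))^{(q-1)/q}$. Since $1-\frac1q=\frac{q-1}{q}$, there is no exponent mismatch in your derivation. The factor $[1\wedge m(B)^{-1}]^{1/(C'q)}$ and the term $\frac{1}{q-1}\ln(1\vee m(B))$ in $N$ come from the extra $m(U)^{1/(2q)}$ in the paper's version, and they only weaken the claim.

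The gap lies in the volume input and the choice of $t$, and it has three parts.

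\emph{Wrong direction of doubling.} For $\sqrt t\ge r$, doubling gives $m(B_x(\sqrt t))\le C(\sqrt t/r)^n m(B)$. The lower bound you use is false in general.

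\emph{The bracket is not positive for large $U$.} This is the decisive point. Take $U=B_x(r)$, which is admissible for every $\varepsilon$. Your bracket becomes $\frac{q-1}{q}\ln\frac{m(B_x(\sqrt t))}{m(B_x(r))}-\ln C$. At $t=r^2$ it equals $-\ln C\le 0$. At $t=r^{2+\varepsilon}$ it is positive only if the volume grows by a factor $C^{q/(q-1)}$ between the radii $r$ and $r^{1+\varepsilon/2}$. Nothing in your argument provides this, and it cannot be guaranteed when $r^{\varepsilon/2}$ is close to $1$. When the right-hand side of $\euler^{-\lambda(U)(t+r^2)}\le\dots$ is at least $1$, the inequality says nothing, so there is no ``trivial estimate'' to fall back on. The missing ingredient is reverse volume doubling, Proposition~\ref{prop:reversedoubling}: $m(B_x(Ar))\ge\frac12A^{\eta}m(B_x(r))$ with $\eta=(2C)^{-21n}$. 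You need it with $t=A^2r^2$ and $A=A(C,n)$ large enough that $\frac12A^\eta$ exceeds $(\euler C)^{q/(q-1)}$.

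\emph{The absorption step is off.} The condition $N\ge\frac1\varepsilon\ln\frac1\varepsilon\cdot\frac{1}{\ln r}$ only yields $\varepsilon^{-2/N}\le r^{2\varepsilon}$, and the factor $r^{-\varepsilon}$ does not absorb this.

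The wrong sign in your first attempt is a symptom of the same issue. With $B(r)$ as reference ball and $t\ge r^2$, doubling only goes in the useless direction. The paper gets the right sign by running the classical scheme at the inflated scale $Ar$.

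\emph{First case.} If $(m(U)/m(B(r)))^{(q-1)/q}\le\delta$, the paper uses $m(B(\sqrt t))\ge C^{-1}(\sqrt t/Ar)^n m(B(Ar))\ge C^{-1}(\sqrt t/Ar)^n m(B(r))$ for $r^2\le t\le A^2r^2$. It then takes $t_0=\bigl(\delta^{-1}(m(U)/m(B(r)))^{(q-1)/q}\bigr)^{2/n'}A^2r^2$. The requirement $t_0\ge r^2$ is exactly where $m(U)\ge\varepsilon m(B(r))$ and the term $\ln(1/\varepsilon)/\ln A$ in $N$ enter.

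\emph{Second case.} Otherwise, reverse doubling makes $U$ small inside $B(Ar)$, and the first case is repeated at that scale.

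\emph{Choice of $A$.} Finally $A=r^{\varepsilon/4}\vee K(C,n,m(B(r)))$ gives $A^{-4}r^{-2}\ge K^{-4}r^{-2-\varepsilon}$. So what the proof actually guarantees is $N$ with the term $\frac{4}{\varepsilon}\ln\frac1\varepsilon\cdot\frac1{\ln r}$, not the printed $\frac1\varepsilon$.

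Once you add reverse doubling, your fixed-time route does close, and it is simpler than the paper's. It gives $\lambda(U)\ge\lambda(B_x(r))\ge c(C,n)\,r^{-2}$ for all $U\subset B_x(r)$ (say when $q\ge 2$). With $N\ge\frac4\varepsilon\ln\frac1\varepsilon\cdot\frac1{\ln r}$ one has $(m(B)/m(U))^{2/N}\le r^{\varepsilon/2}$, so the claimed lower bound is at most $C'r^{-2}$. As written, however, the proposal supplies neither the reverse doubling step nor the correct absorption.
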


As a consequence, averaged Gaussian bounds imply a relative Faber-Krahn inequality for subsets whose measure is at least a prescribed fraction of the ambient ball. The dimension in this inequality depends on the radius, reflecting the fact that the hypotheses are imposed only at large scales. Nevertheless, for every fixed relative measure $\varepsilon$, this dimension converges to the original dimension as the radius tends to infinity.

The scale-dependent dimension reflects the large-scale nature of the hypotheses rather than a defect of the method. Since the averaged Gaussian estimates are available only beyond a fixed time scale, the eigenvalue argument cannot probe arbitrarily small subsets. The resulting loss in dimension disappears asymptotically.

\begin{remark}Theorem~\ref{thm:hk_FK_global} can be generalized to involve the error in the averaged Gaussian upper bound and doubling property by adjusting the constants in Theorem~\ref{thm:main_hk_FK}. This would lead to more complicated expressions for $C'$ and $N$ involving the measure of the considered ball raised to some decreasing power. Note that for fixed $\varepsilon$, we have $N(r)\to n$ as $r\to\infty$, and that the set of $U\subset B(r)$ satisfying the lower bound on $\lambda(U)$ presented in Theorem~\ref{thm:hk_FK_global} changes as $r$ increases.
\end{remark}

\subsection{About the proof strategy}\label{sec:technique}

Theorems~\ref{thm:FK_hk_global} and~\ref{thm:hk_FK_global} follow from the more general Theorems~\ref{thm:main_FK_hk} and~\ref{thm:main_hk_FK}, respectively. Since there are several tricks which might be of own interest, we summarize the proof ideas shortly.
\\

Theorem~\ref{thm:main_FK_hk} is proven by three ingredients. Relative Faber-Krahn inequalities convert into Sobolev inequalities. Classically, Sobolev inequalities yield $\ell^2-\ell^\infty$-bounds for positive solutions of sandwiched heat equations. In the graph case, these estimates explicitly involve the vertex degree. Our first step is to obtain $\ell^2-\ell^{q(r)}$-estimates for non-negative heat-equation solutions in $r$-balls such that $q(r)\to \infty$ as $r\to\infty$ without such errors. Second, we develop an integrated form of Davies' method which converts these estimates into off-diagonal bounds for averaged semigroup norms. Third, optimizing the corresponding correctors yields the Gaussian decay appearing in Theorem~\ref{thm:main_FK_hk}.

The essential step is the integrated Davies trick. Classically, this trick converts suitable $\ell^2-\ell^\infty$-estimates for sandwiched semigroups into $\ell^1-\ell^\infty$ norm of the heat semigroup, i.e., pointwise estimates. This argument is not applicable in the present setting: we need bounds on a time-averaged $\ell^{q/(q-1)}-\ell^q$-norm of the semigroup, with $q$ depending on the scale. We show that these averaged norms can nevertheless be controlled by $\ell^2-\ell^{q}$-norms of the sandwiched semigroups. Gaussian off-diagonal decay is then obtained by optimizing the correctors with respect to the spatial variable, what requires finding appropriate test functions. We obtain a family of estimates which converges to the pointwise regime as $q\to\infty$, while remaining compatible with the large-scale averaged setting.
\\

The proof of Theorem~\ref{thm:main_hk_FK} starts from a different ingredient. Originally, the Dirichlet eigenvalue is bounded below by the pointwise maximum of the heat kernel, but we assume only an averaged bound on the heat semigroup. Our approach is to  establish a lower bound for the first Dirichlet eigenvalue of an arbitrary subset in terms of averaged norms of the Dirichlet heat semigroup. Resulting bounds are then controlled by averaged semigroup norms of ambient balls. Although this is a  different perspective, the obtained bounds extend the classical estimates asymptotically. 
\\

Combining this estimate with the averaged Gaussian bound and reverse volume doubling gives the required Faber-Krahn inequality. The large-scale restriction prevents the use of arbitrarily small times; this is precisely what leads to the lower measure threshold and the scale-dependent dimension in Theorem~\ref{thm:main_hk_FK}.

\subsection{The set-up}\label{sec:general}

The terminology follows mostly \cite{KellerLW-21}.
Let $X$ be countable and
extend a function
 $m\colon X\to(0,\infty)$ of full support to a measure on $X$ via countable additivity.
A symmetric function $b\colon X\times X\to [0,\infty)$  with 
\[
b(x,x)=0\quad\text{and}\quad \deg(x):=\sum_{y\in X}b(x,y)<\infty
\]
for all $x\in X$ 
is called {graph} over the measure space $(X,m)$. We write $ x\sim y $ whenever $ b(x,y)>0 $ for $ x,y\in X $. A graph is called {locally finite} if the set $\{y\in X\colon b(x,y)>0\}$ is finite for all $x\in X$. It is called connected if for any $x,y\in X$ there exists a finite sequence $x_1,\ldots,x_n\in X$, $n\in\NN$, such that $x_i\sim x_{i+1}$ for all $i\in\{1,\ldots, n-1\}$ and $x_1=x$ and $x_n=y$. 
\eat{\begin{center}
\emph{We assume in the following that all graphs are locally finite and connected.}
\end{center}
}

As usual, denote by $\cC(Y)$ the set of all real-valued functions with support in $Y\subset X$, and by $\cC_c(X)$ the subset of $\cC(X)$ containing all functions of finite support.
For any $p\in[1,\infty]$, let 
\[
\ell^p(X,m):=\{f\in\cC(X)\colon \|f\|_p<\infty\},
\]
 the Banach space of $p$-summable functions,
where for $f\in \cC(X)$
\[
\|f\|_p^p:=\sum_{x\in X}m(x)|f(x)|^p, \qquad p\in[1,\infty),\qquad \|f\|_\infty:=\sup_X|f|.
\] 
The Laplace operator $\Delta\colon \cC(X)\to\cC(X)$ on a graph acts as
\[
\Delta f(x)=\frac1{m(x)}\sum_{y\in X} b(x,y)(f(x)-f(y)), 
\] 
for  $  x\in X  $ and  $ f\in \cC(X) $.
The {associated quadratic form} {$\cE\colon \cC_c(X)\to \RR$} is given by
\[ \cE(f):=\frac{1}{2} \sum_{x,y\in X}b(x,y)|f(x)-f(y)|^2, \quad f\in \cC_c(X).\]
Using Green's formula, we have 
\[\cE(f)=\sum_{x\in X}m(x)f(x)\Delta f(x), \qquad \phi\in \cC_c(X).\]
By local finiteness, $ \Delta $ maps the set of compactly supported functions $ \cC_{c}(X) $ into itself. Hence, the restriction of $ \Delta $ to $ \cC_c(X) $ is symmetric in $ \ell^{2}(X,m)$, the Hilbert space of square integrable functions on $(X,m)$ with respect to the inner product $\langle \cdot,\cdot\rangle_{\ell^2(X,m)}$.  By slight abuse of notation we also denote the closure of $\Delta\restriction_{\cC_c(X)}$ in $\ell^2(X,m)$ by   $\Delta\geq 0$.

The (continuous-time) heat semigroup $(\euler^{-t\Delta})_{t\geq 0}$ acts in $ \ell^{2}(X,m) $ and, for an initial condition $ f\in \ell^{2}(X,m) $, the function $ u=\euler^{-t\Delta}f $ solves the heat equation
\[
\frac{d}{dt}u=-\Delta u, \quad u(0,\cdot)=f.
\] 
Moreover, the heat semigroup has an integral kernel $p\colon [0,\infty)\times X\times X\to[0,\infty)$, called the \emph{heat kernel},
satisfying
\[
\euler^{-t\Delta}f(x)=\sum_{y\in X} m(x) p_t(x,y)f(y)
\]
for all $ x\in X, t\geq 0, f\in \ell^2(X,m) $. 

In general it is not possible to provide an exact description of the heat kernel.
Of fundamental interest is the behavior of the heat kernel $p$ in terms of geometric properties of the graph. 
\\

A vital tool for dealing with unbounded Laplacians on graphs are intrinsic metrics, cf.~\cite{Davies-93a,Folz-11,GrigoryanHuangMasamune-12,  BauerKW-15,Keller-15,KellerLW-21,KellerRose-22b,Rose-26}.
An {intrinsic metric}  with respect to $b$ over $(X,m)$  is a non-trivial pseudo-metric $\rho\colon X\times X\to[0,\infty)$  such that 
\[
\sum_{y\in X} b(x,y)\rho^2(x,y)\leq m(x),
\]
for all  $ x\in X $. 
A pseudo metric $ \rho  $ is called a {path metric} with {respect to the graph}  if there is $ w: X\times X\to [0,\infty] $ such that for all $ x,y\in X $
\begin{align*}
	\rho (x,y)=\inf_{x=x_{0}\sim \ldots\sim x_{n}=y}\sum_{j=1}^{n}w(x_{j-1},x_{j})
\end{align*}
{and $w(x,y)<\infty $ iff $ x\sim y $.}
Observe that the choice $ w(x,y)=(\Deg(x)\vee\Deg(y))^{-1/2} $ for $ x\sim y $ and $ w(x,y) =\infty$ otherwise yields an intrinsic path metric.

As usual, we let $$ {B(r)=} B_x(r)=\{y\in X\mid \rho(x,y)\leq r\},$$ $r\geq 0 $, $x\in X$. 

\begin{center}{\em We assume that the graph is equipped with an intrinsic metric which is a path metric for which the  distance balls are compact and we have the \emph{finite jump size} condition}  $${ S:=\sup\{\rho(x,y)\colon x,y\in X, b(x,y)>0\}<\infty }.$$
\end{center}

As a consequence our graphs are {locally finite} and {connected}. Indeed, local finiteness follows from finite balls and finite jump size, while connectedness follows from the fact that $ \rho $ is a path metric taking values in $ (0,\infty) $, cf. \cite{KellerLW-21}.
Moreover, the assumptions  above yield that the metric space $ (X,\rho) $ is complete and geodesic, i.e., for any two vertices $ x,y\in X $ there is a path $x= x_{0}\sim\ldots \sim x_{n}=y $ such that $ \rho(x,y)=\rho(x,x_{j})+\rho(x_{j},y) $ for all $ j=0,\ldots,n $, see \cite[Chapter~11.2]{KellerLW-21}.
\eat{
Given this data, a universal heat kernel bound is nowadays referred to as the Davies-Gaffney-Grigor'yan estimate which holds on any graph equipped with an intrinsic metric. 

\begin{theorem}[{\cite{BauerHuaYau-17}}]\label{thm:bhy}For all $A,B\subset X$ and $t\geq 0$, we have
\[
\frac{1}{m(A)}\sum_{x\in A}m(x)\frac{1}{m(B)}\sum_{y\in B}m(y)p_t(x,y)\leq \frac1{\sqrt{m(A) m(B)}}\euler^{-\Lambda t-t\zeta(\rho(A,B)/t)},
\]
where $\Lambda=\inf\spec(\Delta)$ denotes the spectral bottom of $\Delta$.
\end{theorem}
\eat{\begin{definition}
The {\em Sobolev inequality} $S_\phi(n,r_1,r_2)$ holds in $B$, if for all $x\in B$, $r\in[r_1,r_2]$ {and $u\in\cC_{c}(B_x(r))$}, we have
\begin{equation*}
\frac{m(B_x(r))^{\frac{2}{n}}}{\phi r^2}
\Vert u\Vert_{\frac{2n}{n-2}}^2
\leq \cE(u)+\frac{1}{r^2}\Vert u\Vert_2^2.
\end{equation*}
\end{definition}
}

}
\eat{
\begin{definition}
The {\em relative Faber-Krahn inequality} $FK_a(n,r_1,r_2)$ holds in $B$, if for all $x\in B$, $r\in[r_1,r_2]$ and $U\subset B_x(r) $, we have
\begin{equation*}
\lambda(U):=\inf_{u\in\cC_c(U),\|u\|_2= 1}\cE(u)\geq \frac{a}{r^2}\left(\frac{m(B_x(r))}{m(U)}\right)^\frac{2}{n}.
\end{equation*}
\end{definition}
}

\eat{
\subsection{Gaussian upper heat kernel bounds for the counting measure}\label{sec:counting}
\eat{In order to illustrate our results we consider in this section the special case of the counting measure and compare it with the existing literature. 
}
Let $m=\Eins$ be the counting measure on $X$ assigning to each subset of $X$ the number of contained vertices, and $b$ be a graph over $(X,\Eins)$ with simple weights, i.e., $b(x,y)\in\{0,1\}$, $x,y\in X$.  Equip the resulting graph with the intrinsic path metric $\rho$ given by the weights $ w(x,y)=(\deg(x)\vee\deg(y))^{-1/2} $.

For the graph constructed from the set of integers, i.e., $X=\ZZ$, and the graph defined by the function $b(x,y)=1$ iff $|x-y|=1$, $x,y\in\ZZ$, and zero otherwise, an essentially exact estimate for the heat kernel was obtained by Pang \cite[Theorem~3.5]{Pang-93} :
\[
 p_t(x,y)\simeq \frac{1}{\sqrt {t\vee \rho}}\euler^{-t\zeta\left({\rho}/{t}\right)}
\]
for $x,y\in\ZZ$, $\rho=\rho(x,y),t>0$,
where 
\[
\zeta\big(x\big)
=
x\arsinh\left(x\right) 
+1-\sqrt{x^2+1}\]
for $x\geq 0$. 
Here, $\simeq$ means that the left-hand side can be bounded by the right-hand side up to constants. Note that 
\[
t\zeta(\rho/t)\sim \frac{\rho^2}{2t}
\]
as $t\to \infty$ for all $\rho>0$. Here, $\sim$ means that the left-hand side divided by the right-hand side converges to one. Note that, in this setting, $\sqrt2\cdot \rho$ is the combinatorial distance. 

For large times, the pointwise behavior of the heat kernel on the integers is comparable to the Gauss-Weierstrass function on the real line. 
In the following, upper bounds on the heat kernel of a graph which are comparable to the heat kernel estimate on the integers are referred to as Gaussian bounds.
\eat{ The heat kernel $p$ associated with $\Delta$ is defined as the minimal positive solution of the $\ell^2(X,m)$-Cauchy problem
\[
\partial_t u=-\Delta u, \qquad u(0)=\Eins_x, \qquad x\in X.
\]

For precise definitions, we refer the reader to  Section~\ref{sec:graphs}.
}



Next, we recall a characterization of pointwise Gaussian upper bounds for the heat kernel in terms of relative Faber-Krahn inequalities obtained in \cite{Rose24}. It constitutes a variant of a classical result on Riemannian manifolds by Grigor'yan \cite{Grigoryan-94} and is inspired by ideas from \cite{KellerRose-22a, KellerR26} dealing with local Sobolev inequalities.

Let $r_0,t_0\geq 0$ and $n>0$.  

We say that the heat kernel $p$ satisfies Gaussian upper bounds  if there is a constant $C>0$ and a function $\epsilon \colon [r_0,\infty)\times[0,\infty) \to[0,\infty)$, with $\epsilon(t,\rho) \to0$ if $t\to\infty$ and monotone decreasing for all $\rho\in[0,\infty)$, such that, for all $t\geq t_0$ and all $x,y\in X$, the heat kernel has the upper bound
\begin{equation}\tag{$G_{c}(\epsilon, t_0)$}
p(t, x,y)
\leq C(\deg(x)\vee \deg(y))
^{\epsilon(t,\rho(x,y))}
\frac{\left(1+\sqrt{t^2+\rho(x,y)^2}-t\right)^{\frac{n}{2}}}
 {\sqrt{|B_{x}(\sqrt {t})||B_{y}(\sqrt {t})|}}
\euler^{-t\zeta\left(\rho(x,y)/t\right)}.
\end{equation}
These bounds are characterized in terms of relative Faber-Krahn inequalities for a dimension function $n'\colon \{B_x(r)\subset X\colon x\in X, r\ge0\}\to(0,\infty)$: there exists a constant $r_0\geq 0$ and a function $C\colon(0,\infty)\to(0,\infty)$ such that,  for all $B(r)\subset  X$, $r\geq r_0$, we have
\begin{equation}\tag{$FK_{c}(r_0)$}
\lambda(U):=
\inf_{0\neq f\in\mathcal{C}_c(U)}
\frac{\cE(f)}{\|f\|_2^2}\geq \frac{C^{n'(B(r))}}{r^2}\left(\frac{|B(r)|}{|U|}\right)^\frac{2}{n'(B(r))}
\end{equation}
\eat{These bounds are characterized in terms of local Sobolev inequalities for a dimension function $n'\colon \{B\subset X\}\to(2,\infty)$: there exists a  {constant} $C>0$ such that  for all $B(r)\subset  X$, $r\geq r_0$ and $u\in\cC_c(B(r))$, we have
\begin{equation}\tag{$S_{count}(r_0)$}
C\frac{|B(r)|^{\frac{2}{n'(B(r))}}}{r^2}
\Vert u\Vert_{\frac{2n'(B(r))}{n'(B(r))-2}}^2
\leq \cE(u)+\frac{1}{r^2}\Vert u\Vert_2^2.
\end{equation}
}
The characterization involves  the following two regularity properties of the measure.

The {volume doubling} property is satisfied in $X$ if there is a constant $C>0$ and a function $\epsilon \colon [r_0,\infty) \to[0,\infty)$, with $\epsilon(t) \to0$ if $t\to\infty$ and monotone decreasing, such that, for all $x\in X$ and $r_0\leq r_1\leq r_2$, we have
\begin{equation}\tag{$V_{c}(\epsilon, r_0)$}
|B_x(r_2)|\leq C\deg(x)
^{\epsilon(r_1)}\left(\frac{r_2}{r_1}\right)^{n} |B_x(r_1)|.
\end{equation}

 The local regularity property is satisfied in $ X $ if there is a constant $C>0$ such that for all $x\in X$ and $r\geq r_0$, we have 
\begin{equation}\tag{$L(r_0)$}
\frac{|B_x(r)|}{r^n}
\leq C\deg(x)^{\frac{n}2}.
\end{equation}

\begin{theorem}[{\cite{Rose24}}]\label{thm:r24}
\begin{enumerate}[(i)]
\item Let $r_0,n > 0$ be constants such that ($FK_{c}(r_0))$) with dimension $n$ holds in $X$. Then there exists a function $\epsilon=\epsilon(n) \colon [r_0,\infty)\times [0,\infty) \to[0,\infty)$, with $\epsilon(t,x) \to0$ if $t\to\infty$ monotone decreasing for all $x\in [0,\infty)$, and a constant
$t_0 = t_0(r_0) >0$ such that ($G_{c}(\epsilon,t_0)$), ($V_{c}(\epsilon(\cdot, 0),r_0)$) and ($L(r_0)$) hold in $X$.
\item Conversely, let $r_0,t_0, n>0$ be constants and $\epsilon \colon [r_0,\infty)\times [0,\infty) \to[0,\infty)$ a function with $\epsilon(t,x) \to0$ if $t\to\infty$ and monotone decreasing  for all $x\in [0,\infty)$, such that properties
($G_{c}(\epsilon,t_0)$), ($V_{c}(\epsilon(\cdot,0),r_0)$) and ($L(r_0)$) hold in $X$. Then there is $r_1 = r_1(n,r_0,t_0) > 0$
such that ($FK_{c}(r_1)$) holds with  
dimension function 
\[
n'(B_x(r))=n(1+\epsilon(r))\left(1+\epsilon(r)+\frac1{2\ln r}{\ln \left(1\vee \max\limits_{B_x(cr\max\limits_{B_x(r)}\deg^{c/r})}\deg\right) }\right).
\]
\end{enumerate}
\end{theorem}

\eat{\begin{theorem}[{\cite{KellerR26}}]\label{thm:kr26}
If $r_0>0$ and $n>2$ are constants and $S_{count}(r_0)$ with dimension $n$ holds in $X$, then there exists $t_0=t_0(r_0)>0$ such that $G_{count}(t_0)$, $V_{count}(r_0)$ and $L(r_0)$ hold in $X$. Conversely, if $n{>2}$, $r_0,t_0>0$ are constants such that $G_{count}(t_0)$, $V_{count}(r_0)$ and $L(r_0)$ hold in $X$, then there exists $r_1=r_1(r_0,t_0)>0$ such that  $S_{count}(r_1)$ holds with dimension function $n'$ given by 
\[
n_x'(r)
=
n \left(1+284n\frac{ \ln(\ln r)  }{\ln(r)  } \right)\left(1+\frac{1}{2}\cdot\frac{\ln d_{x}(r)}{  \ln r}\right),\]
where $d_x(r)=\max_{B_x(r)}(1\vee\deg)$.
\end{theorem}

}

\begin{remark}[Uniformly bounded degree]
Note that Theorem~\ref{thm:r24} simplifies if the vertex degree is uniformly upper bounded. In this case, the involved constants depend on the uniform bound and the dimension is just the doubling dimension. Moreover, the condition $L(r_0)$ can be dropped, cf.~\cite{Rose24}. In particular, this simplified version with $r_0=0$ qualitatively compares directly to the settings of Riemannian manifolds .
\end{remark}
\eat{
To summarize, Theorem~\ref{thm:bhy} is a universal upper bound depending only on the metric structure induced by $\rho$ on the graph. In contrast, 
Theorem~\ref{thm:r24} is a characterization of pointwise Gaussian upper heat kernel bounds where involved estimates or constants depend explicitly on the vertex degree. The errors involve the vertex degree since there are no cut-off functions on arbitrarily small scales. 
\\
}

\eat{
Motivated by above considerations, we present Gaussian upper bounds on space-time averages of the heat semigroup rather than the classical pointwise Gaussian upper bounds for the heat kernel.
This generalization has two advantages. First, the obtained estimates do not involve errors depending on the vertex degree. Second, the obtained weighted heat kernel bounds interpolate between the Davies-Gaffney-Grigor'yan lemma and pointwise Gaussian upper bounds. Our second aim is to present relative Faber-Krahn inequalities if only such space-time averages of the heat semigroup and a generalized volume comparison hold on a graph. In this case, the Faber-Krahn inequality only holds for sets whose measure is a given fraction  of the measure of a ball.
\\
}

}

\section{From Faber-Krahn to averaged heat kernel bounds}\label{sec:FK_hk}
This section is devoted to the derivation of averaged Gaussian bounds on the heat kernel. Section~\ref{sec:FKVM} provides consequences of the Faber-Krahn inequality regarding volume doubling and $\ell^2$-$\ell^p$-bounds for positive solutions of sandwiched heat equations and integrated operator norms of sandwiched semigroups. In Section~\ref{sec:davies}, we obtain an integrated version of Davies' trick to obtain off-diagonal averaged heat kernel bounds. Section~\ref{sec:proofmain} puts the $\ell^2-\ell^p$-bounds and the integrated Davies' trick together to prove off-diagonal averaged Gaussian upper heat kernel bounds. 

\subsection{Faber-Krahn, volume doubling, and $\ell^2$-$\ell^p$-bounds}\label{sec:FKVM}

This section is devoted to the study of consequences of the Faber-Krahn inequality regarding volume doubling and $\ell^2$-$\ell^p$-bounds for positive solutions of sandwiched heat equations. It is classical that Faber-Krahn inequalities yield Sobolev inequalities, cf.~Proposition~\ref{prop:FKS}. This allows to  modify recent results obtained in \cite{KellerR26} in order to fit our purposes.
\\

We start with the following relation between Faber-Krahn and Sobolev inequalities.

\begin{proposition}\label{prop:FKS}Assume $c>0$, $n>2$, and $B\subset X$ is such that, for all $U\subset B$, we have 
\[
\lambda(U)\geq c\ m(U)^{-\frac{2}{n}}.
\]
Then, for all $f\in\cC_c(B)$, we have 
\[
2^{-\frac{2n+4}{2n-2}}c\|f\|_{\frac{2n}{n-2}}^2\leq \cE(f).
\]
\end{proposition}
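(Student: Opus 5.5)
We follow the classical level-set truncation argument of Grigor'yan (and Bakry--Coulhon--Ledoux--Saloff-Coste), adapted to graphs. Since $\cE(|f|)\leq\cE(f)$, we may assume $f\geq 0$, $f\in\cC_c(B)$, $f\neq0$. For $k\in\ZZ$ set
\[
f_k:=(f-2^k)_+\wedge 2^k,
\]
so that $f=\sum_k f_k$. Each $f_k$ is a normal contraction of $f$ and is supported in $U_k:=\{f>2^k\}\subset B$. On graphs the pieces are not energy-orthogonal as they are in the continuum. However, $t\mapsto (t-2^k)_+\wedge 2^k$ are $1$-Lipschitz maps whose increments add up to the increment of $t\mapsto t$. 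Hence, for each edge, $\sum_k|f_k(x)-f_k(y)|=|f(x)-f(y)|$, and therefore
\[
\sum_k|f_k(x)-f_k(y)|^2\leq|f(x)-f(y)|^2 .
\]
Summing over edges gives $\sum_k\cE(f_k)\leq\cE(f)$. This replacement of orthogonality is the step that requires care on graphs; it is elementary, but it is where the discrete setting enters.

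Next we apply the hypothesis to each piece. Since $f_k\in\cC_c(U_k)$, we have $\cE(f_k)\geq \lambda(U_k)\|f_k\|_2^2\geq c\,m(U_k)^{-2/n}\|f_k\|_2^2$. On $U_{k+1}$ we have $f_k=2^k$, so $\|f_k\|_2^2\geq 4^k m(U_{k+1})$. Writing $a_k:=m(U_k)$, this yields
\[
c\sum_k 4^k a_{k+1}a_k^{-2/n}\leq \cE(f).
\]

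It remains to bound $\|f\|_{p}^2$ from above by this sum, with $p=\frac{2n}{n-2}$. Since $f\leq 2^{k+1}$ on $U_k\setminus U_{k+1}$,
\[
\|f\|_p^p\leq\sum_k 2^{(k+1)p}\,m(U_k\setminus U_{k+1})\leq 2^{p}\sum_k 2^{kp}a_k=2^{2p}\sum_k 2^{(k-1)p}a_k .
\]
We must convert $\sum_k 2^{kp}a_{k+1}$ into a bound involving $\sum_k4^ka_{k+1}a_k^{-2/n}$. Set $b_k:=2^{kp}a_k$. Using $p\cdot\frac{2}{n}=p-2$, we get
\[
4^k a_{k+1}a_k^{-2/n}=2^{-p}\,b_{k+1}\,b_k^{-2/n}\,2^{2k+p-k(p-2)}\cdot 2^{-kp}\cdot2^{kp}\cdots
\]
and after bookkeeping the sum equals a constant times $\sum_k b_{k+1}b_k^{-2/n}$. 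Now $b_k\leq \sum_j b_j=:\Sigma$, so
\[
\sum_k b_{k+1}b_k^{-2/n}\geq \Sigma^{-2/n}\sum_k b_{k+1}=\Sigma^{1-2/n}=\Sigma^{2/p}.
\]
Combining this with the bound $\|f\|_p^p\lesssim \Sigma$ gives
\[
\|f\|_p^2\leq C(n)\,\Sigma^{2/p}\leq C(n)\,c^{-1}\cE(f).
\]

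The main obstacle is purely numerical: tracking the powers of $2$ so that the constant comes out as $2^{-\frac{2n+4}{2n-2}}$. If the dyadic splitting gives a worse constant, we would instead use levels $\theta^k$ and optimize over $\theta$; the stated exponent suggests such an optimization, or a Hölder step with exponents related to $n-1$.
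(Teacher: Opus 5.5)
Your truncation argument is sound, and it is the standard level-set route. The paper gives no proof of its own here, only a pointer to \cite[Theorem~3.14]{Barlow-book}. Your individual steps check out.
\begin{itemize}
\item Each map $t\mapsto(t-2^k)_+\wedge 2^k$ is non-decreasing, and their increments add up to that of $t\mapsto t$, so $\sum_k\cE(f_k)\le\cE(f)$.
\item Your ``bookkeeping'' display is garbled, but it is in fact an exact identity: $4^k a_{k+1}a_k^{-2/n}=2^{-p}\,b_{k+1}b_k^{-2/n}$ with $p=\frac{2n}{n-2}$, because $p\cdot\frac2n=p-2$.
\item Hence $\cE(f)\ge 2^{-p}c\,\Sigma^{2/p}$.
\end{itemize}
Together with $\|f\|_p^2\le(2^p\Sigma)^{2/p}=4\Sigma^{2/p}$, what you have actually proved is
\[
2^{-\frac{4(n-1)}{n-2}}\,c\,\|f\|_{\frac{2n}{n-2}}^2\le\cE(f),\qquad f\in\cC_c(B).
\]

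What is missing is the stated constant, and your proposed fix does not supply it. Since $\frac{4(n-1)}{n-2}>4>\frac{2n+4}{2n-2}$ for every $n>2$, your factor is strictly smaller than the claimed one. With levels $\theta^k$, your estimates give $(\theta-1)^2\theta^{-p-2}$. This is maximal at $\theta=1+\frac2p$, where for $n=3$ it is about $0.011$, against $2^{-5/2}\approx 0.18$.

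More to the point, no argument can give the displayed factor for $n$ close to $2$. That factor tends to $2^{-4}$ as $n\downarrow 2$, whereas any valid factor must tend to $0$. To see this, take $X=\ZZ^2$, $m\equiv 1$, unit nearest-neighbour weights, and $B=\{|x|_\infty\le R\}$.
\begin{itemize}
\item The two-dimensional Faber--Krahn inequality $\lambda(U)\ge c_0/m(U)$ on $\ZZ^2$ shows that the hypothesis holds with $c=c_0\,m(B)^{-(n-2)/n}$.
\item The logarithmic cut-off $f=\bigl(1-\log(1+|x|)/\log(1+R)\bigr)_+$ lies in $\cC_c(B)$ and has $f(0)=1$, hence $\|f\|_p\ge 1$.
\item Its energy is small: $\cE(f)\le C/\log R$.
\end{itemize}
Fix $R$ and let $n\downarrow 2$. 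The stated inequality would then force $2^{-4}c_0\le C/\log R$, which fails for large $R$.

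Your constant degenerates as $n\downarrow 2$, which is consistent with this example. An $n$-dependent constant is also all the paper needs later: it only invokes the Sobolev inequality ``with a slightly different constant''. So you should record the result with your constant $2^{-\frac{4(n-1)}{n-2}}$ rather than try to reach the displayed one.
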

The proof of Proposition~\ref{prop:FKS} is standard and can be found in many excellent textbooks, see, e.g., \cite[Theorem~3.14]{Barlow-book}.

The following is a slight adaption of some results obtained in \cite{KellerR26}.

\begin{proposition}[Non-collapsing]\label{lem:asnoncoll}Let $x\in X$,  $R \ge 4R_1{\ge 4S}$, 
$n>2$ and $C>0$ constants. Assume  that for all $f\in \cC_{c}(B_x(R))$ we have 
\[
\Vert f\Vert_{\frac{2n}{n-2}}^2\leq C\left(\cE(f)+\frac{1}{R^2}\Vert f\Vert_2^2\right).
\]
Then we have for all {$r\in[2S,R]$}
{
\[
 2^{-{3}n^2}\left(\frac{1}{ C}\right)^{\frac{n}{2}}
\left[1\wedge C^{\frac{n}{2}}\frac{m(B_x(R_1))}{r^{n}}\right]^{2^{2}\sqrt[n]{{S}/{r}}}
\leq \frac{m(B_x(r))}{r^{n}}.
\]}
\end{proposition}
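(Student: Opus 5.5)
The approach is the standard Carron/Grigor'yan non-collapsing iteration: test the Sobolev inequality against cut-off functions supported in balls, and iterate over a geometric sequence of radii descending from $r$. The only new feature is that on a graph the cut-offs cannot be made arbitrarily fine. Their resolution is limited by the jump size $S$, and this limitation is what produces the exponent $2^2\sqrt[n]{S/r}$ and forces the iteration to stop at scale $\sim S$, or at the given scale $R_1$.

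\textbf{Step 1: one-step volume recursion.} Fix $s\in[2S,R]$ and set
\[
f(z)=\bigl(s-\rho(x,z)\bigr)_+ \wedge \tfrac{s}{2},
\]
so $f\in\cC_c(B_x(s))\subset\cC_c(B_x(R))$ and $f=s/2$ on $B_x(s/2)$. Since $\rho$ is intrinsic and $f$ is $1$-Lipschitz,
\[
\cE(f)\le \tfrac12\sum_{z\in B_x(s+S)} \sum_y b(z,y)\rho(z,y)^2 \le m\bigl(B_x(s+S)\bigr).
\]
The jump size $S$ enters here: edges leaving the support reach at most distance $S$. The Sobolev hypothesis then gives
\[
\Bigl(\tfrac{s}{2}\Bigr)^2 m\bigl(B_x(s/2)\bigr)^{\frac{n-2}{n}}\le C\Bigl(1+\tfrac{s^2}{R^2}\Bigr)m\bigl(B_x(s+S)\bigr)\le 2C\, m\bigl(B_x(s+S)\bigr).
\]
Because $s\ge 2S$, we have $s+S\le \tfrac32 s$, so after renaming radii
\[
V(\rho)\ge c\,C^{-1}\rho^2\, V(\rho/3)^{1-2/n},
\]
where $V(\rho)=m(B_x(\rho))$.

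\textbf{Step 2: iteration.} Let $\rho_k=r\,3^{-k}$ for $k=0,\dots,K$, with $K$ the largest index such that $\rho_K\ge \max(2S,R_1)$, roughly. Writing $\theta=1-2/n$ and iterating,
\[
V(r)\ge \prod_{k<K}(cC^{-1}\rho_k^2)^{\theta^k}\, V(\rho_K)^{\theta^K}.
\]
Summing the geometric series $\sum\theta^k\le n/2$ produces $C^{-n/2}r^n$, up to a factor $2^{-O(n^2)}$ coming from $\sum k\theta^k\lesssim n^2$ and the powers of $3$. The remaining factor $(\cdot)^{\theta^K}$ is the correction term. For $V(\rho_K)$ I use either $V(\rho_K)\ge m(B_x(R_1))$ when $\rho_K\ge R_1$, or, if the iteration is stopped at scale $2S$ first, the same bound via $R\ge 4R_1$ and monotonicity.

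\textbf{Main obstacle: controlling $\theta^K$.} Comparing $V(\rho_K)^{\theta^K}$ with $(r^n C^{-n/2})^{\theta^K}$ yields the bracket $\bigl[1\wedge C^{n/2}m(B_x(R_1))/r^n\bigr]^{\theta^K}$, and I need $\theta^K\le 4\sqrt[n]{S/r}$. Since $K\approx\log_3(r/S)$,
\[
\theta^K=\exp\bigl(K\ln(1-2/n)\bigr)\le (S/r)^{2/(n\ln 3)},
\]
which is at most $4(S/r)^{1/n}$ because $2/\ln 3>1$. The slack factor $2^2$ absorbs the rounding in $K$. Handling the cases $r<4S$ or $r<R_1$ (where $K=0$ and the claim follows from the bracket directly) needs separate care, and I expect the constant bookkeeping there to be the delicate part.
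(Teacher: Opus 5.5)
\textbf{Gap in Step 2.} Your Step 1 is correct, but Step 2 does not prove the statement. The problem is not constant bookkeeping: the two demands you place on the stopping index $K$ are incompatible.

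The exponent $2^2\sqrt[n]{S/r}$ requires $K\gtrsim\log_3(r/S)$, i.e.\ iterating down to scale $\sim S$. Replacing $V(\rho_K)$ by $m(B_x(R_1))$ requires $\rho_K\ge R_1$.
\begin{itemize}
\item If $R_1\gg S$ and you stop at $R_1$, then $K\approx\log_3(r/R_1)$, not $\log_3(r/S)$. For instance, $r=3R_1$ gives $K=1$ and $\theta^K=1-2/n$, which exceeds $4\sqrt[n]{S/r}$ once $R_1/S$ is large. Your factor $[1\wedge y]^{\theta^K}$ is then weaker than the claimed $[1\wedge y]^{4\sqrt[n]{S/r}}$ whenever $y<1$.
\item If you go down to $2S$ instead, then $\rho_K<R_1$, and monotonicity gives $V(\rho_K)\le m(B_x(R_1))$, which is the wrong direction. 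The hypothesis $R\ge 4R_1$ does not change this.
\item For $r\in[2S,R_1)$, nothing follows ``from the bracket directly''. When $C^{n/2}m(B_x(R_1))\ge r^n$, the claim is the genuine non-collapsing bound $m(B_x(r))\ge 2^{-3n^2}C^{-n/2}r^n$.
\end{itemize}
The paper's proof follows essentially your route: the Nash form of the inequality, a slope-$2$ cut-off, and halving radii. It has the same defect. It iterates down to a terminal radius $r2^{-\eta}\in[2S,4S)$ and then replaces $m(B_x(r2^{-\eta}))\ge m(x)$ by $m(B_x(r2^{-\eta}))\ge m(B_x(R_1))$, which is justified only when $R_1\le 2S$. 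Following it will therefore not close the gap.

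\textbf{Missing idea.} The jump size is not needed in the energy bound at all. Every edge contributing to $\cE(f)$ has an endpoint in $\supp f$. So for $1$-Lipschitz $f$, the intrinsic-metric condition gives
\[
\cE(f)\le \sum_{z\in\supp f}\sum_{w\in X} b(z,w)\rho(z,w)^2\le m(\supp f)\le m(B_x(s)).
\]
With your cut-off, this yields
\[
m(B_x(s))\ge \frac{s^2}{5C}\, m(B_x(s/2))^{1-2/n}\qquad\text{for every } 0<s\le R,
\]
with no lower restriction on $s$. Now iterate along $s=r2^{-k}$ and let $K\to\infty$. The remainder satisfies $m(B_x(r2^{-K}))^{\theta^K}\ge m(x)^{\theta^K}\to 1$, and $\sum_k k\theta^k\le n^2/4$, so
\[
m(B_x(r))\ge 5^{-\frac n2}\,2^{-\frac{n^2}{2}}\,C^{-\frac n2}\,r^n\ge 2^{-3n^2}C^{-\frac n2}r^n,\qquad 0<r\le R.
\]
Since $[1\wedge y]^{4\sqrt[n]{S/r}}\le 1$, this implies the proposition for all $r\in[2S,R]$. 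The $S$- and $R_1$-dependent correction is an artifact of the cruder bound $m(B_x(s+S))$, and the case analysis you were worried about disappears.
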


\begin{proof}
The proof is the same as the one of \cite[Lemma~xx]{KellerR26} with the only difference that we replace the estimate $m(r)\geq m(x)$ by the estimate $m(r)=m(B_x(r))\geq m(B_x(R_1))=m(R_1)$ in the notation of the cited lemma.
\eat{
Lemma~\ref{lem:StoN} yields for all $f\in \cC_{c}(B_x(R))$ the Nash inequality
\[
\Vert f\Vert_2^{2+4/n}\leq C \left(\Vert |\nabla f|\Vert_2^2+\frac{1}{R^2}\Vert f\Vert_2^2\right)\Vert f\Vert_1^{4/n}.
\]
We apply this to special cut-off functions.
For  {$r\in[2S,R]$}, choose
 
{\[
f_r(y):=
\begin{cases}
	{r}-\rho(x,y),& y\in B_{x}(r/2),\\
\left(\frac{3r}2-2\rho(x,y)\right)_+,& 
\mbox{else},
\end{cases}
\]
}
which satisfies  {$\supp f_r\subset B_x(3r/4)\subset B_x(r)$}, 
 {\[
\Vert f_r\Vert_1\leq {r} m(r), \quad 
 \quad \frac{r}{2}m(r/2)^{1/2}\leq \Vert f_r\Vert_2\leq {r} m(r)^{1/2},
\]}
where we used the notation $ m(r)=m(B_{x}(r)) $.
Since  {$ r\ge 4S $}, we have  
 { $\supp\vert \nabla f_r\vert\subset B_x(3r/4+S)\subset B_x(r)$}. Thus, since $\vert \nabla f_r\vert\leq {2}$, 
\[
\Vert \vert \nabla f_r\vert \Vert_2
\leq  {2} m(r)^{1/2}.
\]
Therefore, the Nash inequality applied to $f_r$ yields 
 {
\begin{align*}
	&\left(\frac{r}2 m\left({r}/2\right)^{\frac12}\right)^{2+\frac4n}
	\leq 
	C \left(4m(r)+\frac{({r})^2}{R^2}m\left({r}\right)\right)
	 {r}^{\frac4n}\left( m\left({r}\right)\right)^{\frac4n}
	\leq 8 C \ m(r)^{1+\frac4n}
	r^{\frac4n},
\end{align*}	
}
where we used  {$ r\le  R $} and the monotonicity of the measure in the last line. If we 
put 
\[\alpha=1+\frac2n,\quad \beta=1+\frac4n,\quad\text{and}\quad q=\frac\alpha\beta{=\frac{n+2}{n+4}},
\]
this is equivalent to
 {\begin{align*}
\left(\frac{r^2}{ 2^{2\alpha+3}C}\right)^{\frac{1}{\beta}}  m\left({r}/2\right)^{q}
\leq   m(r).
\end{align*}}
Iterating the above inequality we obtain
 {\begin{align*}
\left(\frac{r^2}{ 2^{2\alpha+3}C}\right)^{\frac{1}{\beta}\sum_{i=0}^{k-1}q^i}\left(\frac12\right)^{\frac{2}{\beta}\sum_{i=1}^{k-1}iq^i} 
m\left({r}/{2^k}\right)^{q^k}
\leq   m(r).
\end{align*}}
This iteration procedure yields a non-trivial lower bound on $m(r)$ as along as we have  { $r/2^{k-1}\geq 4S$}, i.e., if we choose
\[
k\leq
\left\lfloor  \log_2\frac{r}{2 S}\right\rfloor =:\eta(r).
\]
We are left with the sums in the exponents and start with the calculation of the first from the left. Set $\theta:=q^{\eta}$. Clearly, geometric summation gives $\sum_{i=0}^{ { \eta}-1}q^i=\frac{1-q^{ {\eta}}}{1-q}=\frac{1-{ {\theta}}}{1-q}$. Next, since $q\in(0,1)$ we have 
\[
\sum_{{i}=1}^{k-1}iq^{i}\leq \sum_{i=0}^\infty iq^i=\frac{q}{(q-1)^2}.
\]
Hence, using $m(r)=m(B_x(r))\geq m(B_x(R_1))=m(R_1)$ for all $r\geq 0$,
{\begin{align*}
 m(B_x(r))\ge\left(\frac{r^2}{  2^{2\alpha+3} C }\right)^{\frac{1}{\beta}\frac{1- {\theta}}{1-q}}  
\left({\frac 12}\right)^{\frac{2}{\beta}\frac{q}{(1-q)^2}}
m(R_1)^{ {\theta}}
=\left(\frac{r^2}{ C }\right)^{\frac{n}{2}(1- {\theta})} 
 {\left(\frac12\right)^{(\frac52n+2)(1-\theta)+\frac{n^2}{2}+1}}
m(R_1)^{ {\theta}}
\end{align*}}
where, by noting $q=\alpha/\beta$, $\alpha=1+2/n$ and $\beta=1+4/n$, we used the trivial identities $\beta(1-q)=\beta(1-\alpha/\beta)=\beta-\alpha=2/n$ and 
$q/(1-q)=\alpha/(\beta-\alpha)=n/2+1$. 
Since $ {\theta}=q^{ {\eta}}\in(0,1)$, we get for the exponent of  $1/2$  using $ n>2 $
\[
\left(\frac52n+2\right)(1-\theta)+\frac{n^2}{2}+n
\leq \frac{n^2}{2}+\frac{7}{2}n+2 \leq  3 n^2.
\]
This leads to the estimate
{\[
 2^{-{3}n^2}\left(\frac{1}{ C}\right)^{\frac{n}{2}}
\Phi^{\theta }
\leq \frac{m(B_x(r))}{r^{n}}.
\]}
where $\Phi= C^{{n}/{2}}{m(B_x(R_1))}/{r^{n}} $,
$ \theta =q^{\eta(r)}$, $q=(n+2)/(n+4)\in(0,1)$, and {$ \eta(r)=\lfloor  \log_2 (r/2S) \rfloor$}. In order to obtain the claim, we estimate $\Phi\geq 1\wedge \Phi$ and bound $\theta$ from above. From $\lfloor x\rfloor\geq x-1$ for all $x\in\RR$ we infer
{$$
\eta\geq \log_2{r/2S}-1\geq \log_2{r/4S}.
$$} 
{By using the elementary inequality $ \ln (1+x)\ge 2x/(2+x) $  for $ x\ge 0 $, we obtain the inequality
$ n\ln ((n+4)/(n+2))\ge 2n/(n+3) $. %
Furthermore, $ 2n/(n+3)\ge \ln 2 $ for $ n>2 $. Thus, we have $$  \log_2({1/q})=\log_2({(n+4)/(n+2)})\ge 1/n . $$
}
Therefore, since $S/r\leq1$,
 {\[
\theta=q^{\eta}\leq {q}^{\log_2(\frac{r}{4S})}=4^{\log_2({1/q})}\left(\frac{S}{r}\right)^{\log_2({1/q})}\leq 2^2\left(\frac{S}{r}\right)^{\frac{1}{n}}.
\]}
Applying this estimate leads to the claim.
}
\end{proof}

\begin{proposition}[Volume doubling]\label{prop:doubling} Let $x\in X$,   {$R_2\geq R_1\geq 4S$}, $n>2$ and $\phi\geq 1$. If for all $r\in[R_1,R_2]$ {and $u\in\cC_{c}(B_x(r))$}, we have
\begin{equation*}
\frac{m(B_x(r))^{\frac{2}{n}}}{\phi r^2}
\Vert u\Vert_{\frac{2n}{n-2}}^2
\leq \cE(u)+\frac{1}{r^2}\Vert u\Vert_2^2,
\end{equation*}
then we have
{\[
\frac{m(B_x(R))}{R^{n}}
\leq {A(n)} \Phi_x(n,r,R)\ \frac{m(B_x(r))}{r^{n}}, \quad R_1\leq r\leq R\leq R_2,
\]
}
where
{
\[
\Phi_x(n,r,R)=\left[1\vee  r^n
{\frac{m(B_x(R))}{m(B_x(R_1))R^n}}
\right]^{ 4\sqrt[n]{S/r}},\quad\mbox{and}\quad A(n)=2^{ 8n^2}\phi^{\frac{n}{2}}(1\vee S)^{4n}.
\]
}
In particular, if $R_2= 2R_1$, we obtain 
\begin{align*}
	m(B_x(R_2))
\leq {A'(n)} \Phi_x(n,R_2) m(B_x(R_2/2)),\qquad A'(n)=2^{ 9n^2}\phi^{\frac{n}{2}}(1\vee S)^{4n}
\end{align*}
where $\Phi_x(n,R_2)=\left[
{\frac{m(B_x(R_2))}{m(B_x(R_2/2))}}
\right]^{ 6\sqrt[n]{S/R_2}}\geq \Phi_x(n,R_2/2,R_2)$. 
\end{proposition}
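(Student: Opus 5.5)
We reduce the volume doubling statement to the non-collapsing estimate of Proposition~\ref{lem:asnoncoll}, applied on the larger ball $B_x(R)$ and then read at the smaller radius $r$. Fix $R_1\le r\le R\le R_2$.

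\emph{Step 1: Sobolev constant on $B_x(R)$.} Applying the hypothesis at scale $R$ gives, for all $u\in\cC_c(B_x(R))$,
\[
\|u\|_{\frac{2n}{n-2}}^2\le C\Big(\cE(u)+\frac1{R^2}\|u\|_2^2\Big),\qquad C:=\frac{\phi R^2}{m(B_x(R))^{2/n}}.
\]
This is exactly the assumption of Proposition~\ref{lem:asnoncoll}, with that proposition's $R$ equal to our $R$. For its auxiliary radius we must respect $R\ge 4R_1'\ge 4S$. I would take $R_1'=R_1$ when $R\ge 4R_1$. In the remaining range $R<4R_1$ I would take $R_1'=\max(S,R/4)$; then $m(B_x(R_1'))\ge m(B_x(R_1/ 4))$, and the constants absorb the difference.

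\emph{Step 2: apply non-collapsing at radius $r$.} Since $r\ge R_1\ge 4S\ge 2S$, Proposition~\ref{lem:asnoncoll} gives
\[
2^{-3n^2}\phi^{-n/2}\frac{m(B_x(R))}{R^n}\Big[1\wedge \phi^{n/2}\frac{R^n\, m(B_x(R_1))}{m(B_x(R))\, r^n}\Big]^{4\sqrt[n]{S/r}}\le\frac{m(B_x(r))}{r^n},
\]
using $C^{-n/2}=\phi^{-n/2}m(B_x(R))/R^n$. Rearranging,
\[
\frac{m(B_x(R))}{R^n}\le 2^{3n^2}\phi^{n/2}\Big[1\vee \phi^{-n/2}\frac{r^n m(B_x(R))}{R^n m(B_x(R_1))}\Big]^{4\sqrt[n]{S/r}}\frac{m(B_x(r))}{r^n}.
\]
Since $\phi\ge1$, the factor $\phi^{-n/2}$ inside the bracket can only be dropped upward, which produces $\Phi_x(n,r,R)$. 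The remaining constant $2^{3n^2}\phi^{n/2}$ is bounded by $A(n)$.

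\emph{Main obstacle.} The main difficulty is the bookkeeping when the auxiliary radius is not exactly $R_1$, that is, when $R<4R_1$. There I would try the alternative of applying the estimate with $R_1'=S$. One could then try to compare $m(B_x(S))$ with $m(B_x(R_1))$ by iterating the half-step Nash inequality from the proof of Proposition~\ref{lem:asnoncoll} over the finitely many dyadic scales between them. This costs powers of $(1\vee S)$ and $2^{n^2}$, which is presumably the origin of the factor $(1\vee S)^{4n}$ and of $8n^2$ in $A(n)$. I expect this step to be the most delicate, but it only affects constants.

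\emph{Step 3: the case $R_2=2R_1$.} Set $R=R_2$ and $r=R_2/2$ in the main estimate and multiply by $R^n/r^n=2^n$; this gives the constant $2^{8n^2+n}\le 2^{9n^2}$. For the bracket, note $m(B_x(R_1))=m(B_x(R_2/2))$, so it equals
\[
\Big[1\vee 2^{-n}\,\frac{m(B_x(R_2))}{m(B_x(R_2/2))}\Big]^{4\sqrt[n]{2S/R_2}}.
\]
This is at most
\[
\Big[\frac{m(B_x(R_2))}{m(B_x(R_2/2))}\Big]^{4\cdot 2^{1/n}\sqrt[n]{S/R_2}},
\]
because the ratio is at least $1$. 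Finally $4\cdot2^{1/n}\le 6$ for $n>2$, which gives $\Phi_x(n,R_2)\ge\Phi_x(n,R_2/2,R_2)$.
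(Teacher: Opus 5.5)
\textbf{Gap: the case $R<4R_1$.} Your Step~2 is the paper's mechanism: non-collapsing on $B_x(R)$ with $C=\phi R^2 m(B_x(R))^{-2/n}$, inversion, and dropping $\phi^{-n/2}\le 1$ inside the bracket. Your Step~3 is the paper's $4\cdot 2^{1/n}\le 6$ computation. The gap is your treatment of $R<4R_1$, and it is not a matter of constants.

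If you shrink the auxiliary radius to $R_1'=\max(S,R/4)<R_1$, the inversion produces
$\bigl[1\vee r^n m(B_x(R))/(R^n m(B_x(R_1')))\bigr]^{4\sqrt[n]{S/r}}$. Because $m(B_x(R_1'))\le m(B_x(R_1))$, this is \emph{larger} than $\Phi_x(n,r,R)$. To get back to $\Phi_x(n,r,R)$ you would need $m(B_x(R_1))\le K\, m(B_x(R_1'))$ with $K$ depending only on $n,\phi,S$. That is a volume doubling bound at scale $R_1$, i.e.\ the statement you are trying to prove, so it cannot be absorbed into $A(n)$.

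Your fallback of iterating the Nash step between $S$ and $R_1$ runs in the wrong direction. Each step $m(B_x(s))^{1+4/n}\gtrsim C^{-1}s^2\, m(B_x(s/2))^{1+2/n}$ bounds a larger ball from \emph{below} by a smaller one. You need an \emph{upper} bound on the larger ball $B_x(R_1)$.

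This case is not marginal. When $R_2=2R_1$, every admissible $R$ satisfies $R\le 2R_1<4R_1$, so your Step~3 rests entirely on the unproved regime. Also, the factor $(1\vee S)^{4n}$ in $A(n)$ does not originate here: with $r^n$ kept inside $\Phi_x$, your constant $2^{3n^2}\phi^{n/2}$ already suffices.

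\textbf{What the paper does instead.} The paper does not split cases. It applies Proposition~\ref{lem:asnoncoll} with auxiliary radius $R_1$ itself for every $R\in[R_1,R_2]$, i.e.\ outside its stated range $R\ge 4R_1$. This is possible because that proposition's proof never uses $R\ge 4R_1$. It only uses the Sobolev inequality on $B_x(R)$ for test functions supported in $B_x(s)$ with $s\le R$. So the missing step in your proposal is to check that the non-collapsing bound holds with $R_1$ itself for $R_1\le R<4R_1$, rather than trading $R_1$ for a smaller radius.

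When you make that check, note where $R_1$ actually enters the argument. It appears only in the final comparison of the innermost ball of the dyadic iteration with $B_x(R_1)$. That innermost radius lies in $[2S,4S)$, which is below $R_1\ge 4S$. So the comparison is not a consequence of monotonicity of $s\mapsto m(B_x(s))$ and needs its own justification. The present proposition inherits whatever that step delivers.
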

\begin{proof}The proof is the same as the proof of \cite[Theorem~2.5]{KellerR26} with the only differences that we use Lemma~\ref{lem:asnoncoll} instead of \cite[Lemma~2.3]{KellerR26}, and that we do not use the local regularity condition.
For the in particular statement, we use that, since $n>2$, we have $4\cdot 2^{1/n}\leq 4\sqrt{2}\leq 4\cdot 3/2=6$, such that, for all $a\geq 1$, we get $a^{4\sqrt[n]{S/(R_2/2)}}=a^{42^{1/n}\sqrt[n]{S/R_2}}\leq a^{6\sqrt[n]{S/R_2}}$. 
This finishes the proof.
\eat{Let $R\in[R_1,R_2]$, and  {$w:=2^{-2}\sqrt[n]{r/S}$}. The non-collapsing lemma,
Lemma~\ref{lem:asnoncoll}, yields with $C=\phi\frac{R^{2}}{m(B(R))^\frac{2}{n}}$ for all  {$r\in [ 4S,R]$}
{
\[
\frac{1}{2^{ {3}n^2}\phi^\frac{n}{2}}\frac{m(B_x(R))}{R^{n}}
\left[1\wedge  \phi^{\frac{n}{2}}\frac{R^{n}}{m(B_x(R))}\frac{m(B_x(R_1))}{r^{n}}\right] ^{ \frac{1}{w}}
\leq \frac{m(B_x(r))}{r^{n} }.
\]
}
Division by the first and third factor  yields
\begin{multline*}
\frac{m(B_x(R))}{R^{n}}
\leq 
2^{  {3}n^2}
\phi^{\frac{n}{2}}
\left[1\vee \left(\frac{r}{R}\right)^{n} {\frac{m(B_x(R))}{\phi^{\frac{n}{2}}m(B_x(R_1))}}\right]^{ \frac{1}{w}}\frac{m(B_x(r))}{r^{n}}
\\
\leq 
2^{  {3}n^2}
\phi^{\frac{n}{2}}(1\vee r^\frac{n}{w})
\left[1\vee  
{\frac{m(B_x(R))}{m(B_x(R_1))R^n}}
\right]^{ \frac{1}{w}}\frac{m(B_x(r))}{r^{n}},
\end{multline*}
where we used $\phi\geq 1$.
Now, note
{
\[
r^{\frac{n}{w}}=S^{2^2n\sqrt[n]{S/r}}h(r/S)^{2^2n}
\] 
with $$ h(x)=x^{\sqrt[n]{1/x}}=\exp\left(\frac{\ln x}{x^{\frac{1}{n}}}\right).$$
Differentiating with respect to $ x $ yields that the maximum is attained at $ x= \euler^{n}$ and hence we can estimate $ h(r/S)^{2^2n}\leq e^{2^2n^2/e}\leq 2^{3n^2} $ since $ e^{{4/e}}\leq 2^3 $.} 
The second claim is an application of the first one by observing 
\[
\left[1\vee \frac{1}{R_1^n}\right]^{4\sqrt[n]{S/R_1}}
\leq \left[1\vee \frac{(1\vee S)^n}{R_1^n}\right]^{4\sqrt[n]{(1\vee S)/R_1}}
=1\vee h((1\vee S)/R_1)
\] 
where $h(x)=x^{4n/\sqrt[n]{x}}\leq h(e^n)=e^{4n^2/e}\leq 2^{4n^2}$.
This finishes the proof.
}
\end{proof}

For a $\rho$-Lipschitz function $\omega$ we define
\[
\Delta_\omega v(x):=\euler^{\omega(x)}\Delta(\euler^{-\omega}v)(x)\quad\text{and}\quad h(\omega):=\sup_{x\in X} \sum_{y\in X} \frac{b(x,y)}{m(x)}\vert (\euler^{\omega(x)}-\euler^{\omega(y)})(\euler^{-\omega(x)}-\euler^{-\omega(y)})\vert.
\]
In this paragraph, $v\geq 0$ denotes a non-negative subsolution of $\partial_t+\Delta_\omega$ on $[0,\infty)\times X$, i.e.,
\[
\frac{d}{dt}v+\Delta_\omega v\leq 0.
\] 

The following is from \cite[Theorem~2.7]{KellerRose-22a}.

\begin{proposition}[Moser in time and space, cf.~{\cite[Theorem~2.7]{KellerRose-22a}}]\label{prop:moser}
Let $ x\in X $, $T\in\RR$, 
$n>2$, $\delta\in(0,1]$, $r\geq 128 S$ and constants $\phi,\Phi\geq 1$.
Assume for all $s\in[r/2,r]$ {and $u\in\cC_{c}(B_x(s))$}, we have
\begin{equation*}
\frac{m(B_x(s))^{\frac{2}{n}}}{\phi s^2}
\Vert u\Vert_{\frac{2n}{n-2}}^2
\leq \cE(u)+\frac{1}{s^2}\Vert u\Vert_2^2,
\end{equation*} and the doubling property
\[
m(B_x(r))\leq \Phi\  m(B_x(r/2)).
\]
 For all non-negative $\Delta_\omega$-subsolutions $v\geq 0$ on $[T-r^2,T+r^2]\times B_{x}(r)$, we have
 \[
\left(\frac{1}{
m({B_{x}(r/2)})}\fint\limits_{ T-\delta (r/2)^2}^{T+\delta (r/2)^2}\sum_{B_{x}(r/2)} mv_t^{2w(r)}\drm t\right)^{\frac{1}{w(r)}}
\\
\leq 
 \frac{C_{n,\phi,\Phi}(1+\delta r^2h(\omega))^{\frac{n}{2}+1}  }{\delta^{\frac{n}{2}}
 m({B_{x}(r)})}
\fint\limits_{T- \delta r^2}^{T+\delta r^2}\sum_{B_{x}(r)} mv_t^{2}\drm t,
\]
where  
\[
C_{n,\phi,\Phi}:=2^{ {110}n^2}(\phi\Phi)^{2n}\qquad\text{and}\qquad w(r)={\left(\frac{r}{8S}\right)^{\frac{1}{n}}}.
\]
\end{proposition}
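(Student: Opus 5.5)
This estimate is proved by a parabolic Moser iteration for the weighted operator $\Delta_\omega$, in which the usual $\ell^\infty$ target is replaced by a finite, scale-dependent exponent $2w(r)$. That replacement is what keeps degree terms out of the bound. Throughout, $v\ge 0$ is a subsolution, $\partial_t v+\Delta_\omega v\le 0$, on $[T-r^2,T+r^2]\times B_x(r)$.

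\textbf{Step 1: Caccioppoli inequality.} Fix radii $r/2\le s'<s\le r$, time-scales $\delta s'^2<\delta s^2$, an exponent $p\ge 1$ and a cut-off $\psi(t,y)=\chi(t)\eta(y)$. Here $\eta$ is a $\rho$-Lipschitz function equal to $1$ on $B_x(s')$, supported in $B_x(s-S)$, with Lipschitz constant $\lesssim 1/(s-s'-S)$; $\chi$ is a time cut-off with slope $\lesssim 1/(\delta(s^2-s'^2))$. Test the subsolution inequality against $\psi^2 v^{2p-1}$. The weighted form splits as $\cE$ plus a perturbation controlled by $h(\omega)$, since
\[
\langle \Delta_\omega u,u\rangle\ge \cE(u)-\tfrac12 h(\omega)\|u\|_2^2 .
\]
Using the discrete product rule for $v^p$ and the intrinsic-metric bound $\sum_y b(x,y)\rho^2(x,y)\le m(x)$, one obtains
\begin{multline*}
\sup_t\|\psi v^p\|_2^2+\int \cE(\psi v^p)\,\drm t
\\
\lesssim p^2\Bigl(\frac{1}{(s-s')^2}+\frac{1}{\delta(s^2-s'^2)}+h(\omega)\Bigr)\iint_{Q_s} m v^{2p}.
\end{multline*}
The factor $1+\delta r^2 h(\omega)$ appearing in the final bound comes from this step.

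\textbf{Step 2: Sobolev plus Hölder in time.} Apply the assumed Sobolev inequality on $B_x(s)$ to $\psi v^p$, and interpolate with the $\sup_t\ell^2$ bound from Step 1. This produces a reverse Hölder gain with exponent $\kappa=1+2/n$:
\[
\Bigl(\fint\sum m (\psi v^{p})^{2\kappa}\Bigr)^{1/\kappa}
\lesssim \frac{\phi\, s^2}{m(B_x(s))^{2/n}}\cdot(\text{RHS of Step 1}).
\]
The doubling hypothesis $m(B_x(r))\le \Phi\, m(B_x(r/2))$ is used to replace $m(B_x(s))$ by $m(B_x(r))$ for every $s\in[r/2,r]$, at a cost of a factor $\Phi$.

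\textbf{Step 3: Iteration.} Take $p_k=\kappa^k$, radii $s_k=r/2+r2^{-k-1}$ and time windows $\delta s_k^2$. The cut-off construction requires $s_k-s_{k+1}=r2^{-k-2}\ge 2S$, so the iteration can run only for $k\le K\approx \log_2(r/(8S))$ steps. After $K$ steps the exponent reached is
\[
\kappa^K=(1+2/n)^{\log_2(r/8S)}\ge (r/8S)^{1/n}=w(r),
\]
using $\log_2(1+2/n)\ge 1/n$ for $n>2$. If $\kappa^K$ overshoots $w(r)$, stop at the step just before and use Hölder to come down to exactly $w(r)$. The accumulated constants form the product $\prod_k \bigl(C p_k^2 4^k\bigr)^{1/p_k}$, which converges because $\sum_k k\kappa^{-k}\lesssim n^2$. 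This yields the $2^{O(n^2)}$ factor, the $(\phi\Phi)^{O(n)}$ factor, and, after normalising by the $\delta$-dependent time windows, the powers $\delta^{-n/2}$ and $(1+\delta r^2h(\omega))^{n/2+1}$.

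\textbf{Main obstacle.} The delicate point is the bookkeeping in Steps 1 and 3: the cut-offs are genuinely discrete, so their supports must shrink by at least $S$ at each step. This restriction is exactly what caps the number of iterations at $\sim\log_2(r/S)$ and hence the exponent at $w(r)$ rather than $\infty$. The iteration must also track $h(\omega)$ uniformly across steps. Since this is precisely the content of \cite[Theorem~2.7]{KellerRose-22a}, I would follow that proof and only verify that the constants match.
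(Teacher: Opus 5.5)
Your proposal is correct and follows the same route as the paper. The paper does not reprove this statement but imports it from \cite[Theorem~2.7]{KellerRose-22a}, which is a parabolic Moser iteration for $\Delta_\omega$-subsolutions: the discrete cut-offs force the radii to shrink by a multiple of $S$ per step, so the iteration stops after about $\log_2(r/S)$ steps at an exponent $\ge w(r)$, and doubling is used only to compare $m(B_x(s))$ for $s\in[r/2,r]$. The one thin spot is Step~1, since the bound $\langle\Delta_\omega u,u\rangle\ge\cE(u)-\frac12h(\omega)\|u\|_2^2$ does not directly apply to the test function $\psi^2v^{2p-1}$: terms of first order in $\omega(x)-\omega(y)$ appear and must be absorbed into the energy and cut-off terms (using $0\le\psi\le1$, and crude bounds where $|\omega(x)-\omega(y)|\ge1$), but this is exactly the part you defer to the cited proof.
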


With the help of the $\ell^2-\ell^p$-bounds obtained in Proposition~\ref{prop:moser}, we derive bounds for norms of sandwiched heat semigroups.

For the semigroup $ (P_{t})_{t\ge0} $ of the Laplacian $ \Delta $ on $ \ell^{2}(X,m) $ and $\omega\in \ell^\infty(X)$, we let
\[
P_t^{\omega}:=\euler^{\omega}P_t\euler^{-\omega}.
\]
Then, $(P_t^{\omega})_{t\geq 0}$ is also a semigroup of bounded operators on $\ell^2(X,m)$. 
For $f\in \ell^2(X,m)$, the map $t\mapsto P_t^{\omega} f$ solves the $\ell^2$-Cauchy problem for the operator $\euler^\omega \Delta\euler^{-\omega}$, i.e.,
\[
\frac{d}{d t} P_t^{\omega} f=-\euler^\omega \Delta\euler^{-\omega}P_t^{\omega} f,
\]
since $P_tf$ is the solution of the $\ell^2$-Cauchy problem for $\Delta$. Proposition~\ref{prop:moser} yields the following for the operator norm of the semigroup $(P^\omega_t)_{t\geq 0}$.

\begin{theorem}\label{thm:subsolution}
Let $ x\in X $, $T\in\RR$, 
$n>2$, $\delta\in(0,1]$, $r\geq r_1/2\geq 128 S$ and constants $\phi\geq 1$.
Assume $S_{\phi}(n,r/2,r)$ in $ x$. Then, for all $\omega\in\ell^\infty(X,m)$, we have 
 \begin{multline*}
\left(\frac{1}{m({B_{x}(r_1)})}\fint\limits_{ T-\delta r_1^2}^{T+\delta r_1^2}\|{B_{x}(r_1)} P_t^\omega\|_{2,2q}^{2q}\drm t\right)^{\frac{1}{q}}
\\
\leq \left(\frac{m({B_{x}(r)})}{m(B_x(r_1))}
\right)^{\frac{3}{q}}\frac{C_{n,\phi}(1+\delta r^2h(\omega))^{\frac{n}{2}+1}  }{\delta^{\frac{n}{2}} m({B_{x}(r)})}
\fint\limits_{T- \delta r_2^2}^{T+\delta r_2^2}\|{B_{x}(r)} P_t^\omega\|_{2,2}^{2}\drm t.
\end{multline*}
where 
\[
C_{n,\phi}:=2\cdot2^{ {111}n^2}(\phi{A'(n)} )^{2n}\qquad\text{and}\qquad q=q(r^2)=\frac{1}{21n}\sqrt[2n]{\frac{r^2}{S^2}}
\]
\end{theorem}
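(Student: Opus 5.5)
The plan is to apply Proposition~\ref{prop:moser} to the subsolutions $v_t=|P^\omega_t f|$, then take the supremum over $f$ in the unit ball of $\ell^2$. Before that, the hypotheses of Proposition~\ref{prop:moser} have to be put in place.

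\emph{Doubling constant.} From $S_\phi(n,r/2,r)$ in $x$, the in particular part of Proposition~\ref{prop:doubling} with $R_2=r$ gives
\[
m(B_x(r))\leq A'(n)\,\Phi_x(n,r)\, m(B_x(r/2)).
\]
The factor $\Phi_x(n,r)$ depends on the ratio $m(B_x(r))/m(B_x(r/2))$ and is not a constant. Instead of inserting it into $C_{n,\phi,\Phi}$, which would make the dependence uncontrolled, I keep it explicit. I bound it by a power of $m(B_x(r))/m(B_x(r_1))$, using $r_1\le r$ together with the monotonicity $m(B_x(r/2))\ge m(B_x(r_1))$ when $r_1\le r/2$, and a trivial bound otherwise. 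Its exponent $6\sqrt[n]{S/r}$ can then be compared with $1/q$, since $q=\frac{1}{21n}(r/S)^{1/n}$ makes $\sqrt[n]{S/r}\approx 1/(21nq)$. Similarly, $(\phi\Phi)^{2n}$ splits into $(\phi A'(n))^{2n}$, which enters $C_{n,\phi}$, and $\Phi_x^{2n}$. The latter is again a power $\lesssim 12n\sqrt[n]{S/r}\lesssim 1/q$ of the volume ratio. This is what produces the factor $\bigl(m(B_x(r))/m(B_x(r_1))\bigr)^{3/q}$.

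\emph{Subsolution property and Moser step.} For $f\in\ell^2$, $u_t=P^\omega_t f$ solves $\partial_t u=-\Delta_\omega u$. Each of $(P^\omega_t f_\pm)$ is nonnegative, since $P_t$ is positivity preserving and $\euler^{\pm\omega}>0$, so I split $f=f_+-f_-$ and handle each part separately; the resulting factor $2$ is absorbed in the constant. Applying Proposition~\ref{prop:moser} with $w(r)=(r/8S)^{1/n}$ gives an $L^{2w}$ space-time average over $B_x(r/2)\times[T-\delta(r/2)^2,T+\delta(r/2)^2]$, controlled by the $L^2$ average over $B_x(r)\times[T-\delta r^2,T+\delta r^2]$. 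Since $q\le w(r)$ (indeed $q\ll w$ by the factor $21n$ and the root), Hölder's inequality in space-time lowers the exponent from $w$ to $q$. Moving from the ball $B_x(r/2)$ and the time window $\delta(r/2)^2$ to $B_x(r_1)$ and $\delta r_1^2$ costs ratios of measures and of interval lengths. These are again bounded by powers of the volume ratio. I would read $r_1\le r/2$ here; if $r_1$ is larger, a shifted-center or covering version is needed, which is another source of $(m(B_x(r))/m(B_x(r_1)))^{\cdot/q}$. I would also read the window on the right-hand side, written as $\delta r_2^2$ in the statement, as $\delta r^2$.

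\emph{Passing to operator norms.} This is the main obstacle. Proposition~\ref{prop:moser} bounds $\int\|B_{x}(r_1)u_t\|_{2q}^{2q}\,\drm t$ for a single $f$, whereas the statement has $\int \sup_f$. The supremum inside the time integral cannot simply be exchanged with the integral. My first attempt is to use the semigroup property: write $P^\omega_t=P^\omega_{t-s}P^\omega_s$ with $s$ fixed at the left end of the window, and apply the Moser estimate to $v_\tau=P^\omega_{\tau-s}g$ with $g=P^\omega_s f$ ranging over a set of controlled $\ell^2$ norm. If that fails, I fall back on duality. Since $\|B_x P_t^\omega\|_{2,2q}=\|P_t^{-\omega}B_x\|_{(2q)',2}$, I would take a near-extremal function for each $t$ and use the measurability and continuity of $t\mapsto P^\omega_t$ to approximate by finitely many $f$. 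Finally, on the right-hand side I bound $\sum_{B_x(r)}m\,u_t^2\le\|B_x(r)P^\omega_t\|_{2,2}^2$ for $\|f\|_2\le1$. Collecting all constants then gives $C_{n,\phi}=2\cdot 2^{111n^2}(\phi A'(n))^{2n}$, where the extra $2^{n^2}$ accounts for the Hölder and window-change losses.
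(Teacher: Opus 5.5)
Your first two steps match the paper's first step. You take $\Phi=A'(n)\Phi_x(n,r)$ from the in-particular part of Proposition~\ref{prop:doubling}, bound $\Phi_x$ using $m(B_x(r/2))\ge m(B_x(r_1))$, and count exponents as $\frac1w+12n\sqrt[n]{S/r}\le 19n\sqrt[n]{S/r}\le\frac1q$. You then use Hölder from $w$ to $q$, and read the hypotheses as $r\ge 2r_1\ge 256S$ and $r_2=r$. That last reading is the regime in which Theorem~\ref{thm:main_FK_hk} applies the result, so you can drop the case $r_1>r/2$ (where your ``trivial bound'' does not exist) and any covering argument. One detail: shrink the ball and window at exponent $w$, before Hölder, as the paper does. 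The interval factor $(r/r_1)^{2/w}$ is not a volume ratio. It is the $w$ in its exponent that gives the bound $2^{n^2}$ via $x\mapsto x^{2x^{-1/n}}$.

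The genuine gap is step three, which you flag but do not close. Write $I_1=[T-\delta r_1^2,T+\delta r_1^2]$, $g_f(t)=\|B_x(r_1)P^\omega_tf\|_{2q}^{2q}$, and let $K$ be the right-hand side of the statement. After splitting $f=f_+-f_-$, Proposition~\ref{prop:moser} yields only
\[
\sup_{\|f\|_2\le 1}\frac{1}{m(B_x(r_1))}\fint_{I_1} g_f(t)\,\drm t\le K^{q}.
\]
The statement needs $\fint_{I_1}\sup_f g_f$, which is in general larger.

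Neither of your remedies closes this:
\begin{itemize}
\item Writing $P^\omega_tf=P^\omega_{t-s}(P^\omega_sf)$ describes the same solution. The near-extremal $g=P^\omega_sf$ still depends on $t$, so the supremum stays inside the integral.
\item The finite approximation fails too. Since $B_x(r_1)$ is finite, $t\mapsto B_x(r_1)P^\omega_t$ is norm-continuous, so near-extremizers $f_1,\dots,f_N$ on a partition $J_1,\dots,J_N$ of $I_1$ exist. But then
\[
\int_{I_1}\sup_f g_f\,\drm t\le\varepsilon+\sum_{i=1}^N\int_{J_i}g_{f_i}\,\drm t\le\varepsilon+N\sup_f\int_{I_1}g_f\,\drm t,
\]
and $N$ is unbounded as $\varepsilon\to0$. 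Applying Proposition~\ref{prop:moser} on each $J_i$ instead replaces $\delta$ by $\delta/N$ and costs a factor $(N/\delta)^{n/2}$.
\item Duality only rewrites the norm as another supremum at each fixed $t$.
\end{itemize}
The underlying obstruction is that a space-time $L^{2w}$ bound with finite $w$ says nothing about $\|B_x(r_1)v_t\|_{2q}$ at a fixed time $t$.

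You will not find the missing step in the paper either. Its proof ``takes the supremum on the left-hand side'' and invokes continuity of $\tau\mapsto\|B_x(r_1)P^\omega_\tau f\|$ and dominated convergence. That yields exactly the $\sup_f\fint$ form above; continuity only makes the operator-norm integrand measurable. The weaker form does not suffice downstream, because Theorem~\ref{lem:integratedDavies} uses $\|VP^\omega_\tau\|_{2,q}$ at each fixed $\tau$.

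What is needed is a fixed-time version of the local estimate. That means bounding $\sup_{t\in I_1}\bigl(\frac{1}{m(B_x(r_1))}\sum_{B_x(r_1)}m\,v_t^{2q}\bigr)^{1/q}$ by the right-hand side for every nonnegative solution $v$. In a Moser iteration this is typically obtained by keeping the $\sup_t$ term of the parabolic energy (Caccioppoli) inequality in a final step. With such a bound you can take $\sup_{\|f\|_2\le1}$ at each fixed $t$ before averaging, and the rest of your argument goes through.
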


\begin{proof}Abbreviate $q:=q(r^2)$ and $w:=w(r)$. In the first step, we derive the $\ell^2$-$\ell^{q}$-estimate for general non-negative $\omega$-solutions $v$. The second step is then devoted to the derivation of the claimed $\ell^2$-$\ell^{q}$-estimate involving the operator norms.

Since $n>2$ we have $3q\leq w$ and hence $\vertiii{f}_{L^{q}(Q)}\leq \vertiii{f}_{L^{w}(Q)}$. We have
\begin{multline*}
\left(\frac{1}{m({B_{x}(r_1)})}\fint\limits_{ T-\delta r_1^2}^{T+\delta r_1^2}\sum_{B_{x}(r_1)} mv_t^{2w}\drm t\right)^{\frac{1}{w}}
\\=
\left(\frac{m({B_{x}(r/2)})}{m(B_x(r_1))}\frac{r^2}{r_1^2}\right)^{\frac{1}{w}}\left(\frac{1}{m({B_{x}(r/2)})}\frac{1}{2\delta (r/2)^2}\int\limits_{ T-\delta r_1^2}^{T+\delta r_1^2}\sum_{B_{x}(r_1)} mv_t^{2w}\drm t\right)^{\frac{1}{w}}
\\
\leq 
\left(\frac{m({B_{x}(r)})}{m(B_x(r_1))}
\right)^{\frac{1}{w}}
\left(\frac{r^2}{r_1^2}\right)^{\frac{1}{w}}\left(\frac{1}{m({B_{x}(r/2)})}
\fint\limits_{ T-\delta (r/2)^2}^{T+\delta (r/2)^2}\sum_{B_{x}(r/2)} mv_t^{2w}\drm t\right)^{\frac{1}{w}}.
\end{multline*}
We infer from Proposition~\ref{prop:doubling} and Proposition~\ref{prop:moser}
\begin{multline*}
\left(\frac{1}{m({B_{x}(r_1)})}\fint\limits_{ T-\delta r_1^2}^{T+\delta r_1^2}\sum_{B_{x}(r_1)} mv_t^{2q}\drm t\right)^{\frac{1}{q}}
\leq \left(\frac{1}{m({B_{x}(r_1)})}\fint\limits_{ T-\delta r_1^2}^{T+\delta r_1^2}\sum_{B_{x}(r_1)} mv_t^{2w}\drm t\right)^{\frac{1}{w}}
\\
\leq 
\left(\frac{m({B_{x}(r)})}{m(B_x(r_1))}
\right)^{\frac{1}{3q}}
\left(\frac{r^2}{r_1^2}\right)^{\frac{1}{w}}
 \frac{2^{ {110}n^2}\phi^{2n}\Phi_x(n,r)^{2n}(1+\delta r^2h(\omega))^{\frac{n}{2}+1}  }{\delta^{\frac{n}{2}+1} r^{2}m({B_{x}(r)})}
\int\limits_{T- \delta r^2}^{T+\delta r^2}\sum_{B_{x}(r)} mv_t^{2}\drm t,
\end{multline*}
where we used $1/w\leq 1/(3q)$ and the definition $\Phi_x(n,r)=\left[
{\frac{m(B_x(r))}{m(B_x(r_1))}}
\right]^{ 6\sqrt[n]{S/r}}$. We have
\[
\left(\frac{m({B_{x}(r)})}{m(B_x(r_1))}
\right)^{\frac{1}{3q}}\Phi_x(n,r)^{2n}
=
\left(\frac{m({B_{x}(r)})}{m(B_x(r_1))}\right)^{19n\sqrt[n]{S/r}}\leq 
\left(\frac{m({B_{x}(r)})}{m(B_x(r_1))}
\right)^{\frac{1}{q}}.
\]
Further, since $w(r)={\left(\frac{r}{2^3S}\right)^{\frac{1}{n}}}\geq \left(\frac{2^8S}{2^3S}\right)^{\frac{1}{n}}\geq 1$ we obtain
\[
\left(\frac{r^2}{ r_1^2}\right)^{\frac{1}{w(r)}}=\left(\frac{r}{ r_1}\right)^{\frac{w(r_1)}{w(r)}\frac{2}{w(r_1)}}=h(r/r_1)^{\frac{1}{w(r_1)}}\leq 2^{\frac{n^2}{w(r_1)}}\leq 2^{n^2},
\]
where we used $h(x)=x^{2/\sqrt[n]{x}}\leq 2^{n^2}$. 
This yields the desired $\ell^2$-$\ell^{q}$-estimate for non-negative $\omega$-solutions. 

Next, we apply the above estimate to the $\omega$-solution $v_t=P_t^\omega f$ for $0\leq f\in \ell^2(X,m)$ with $\|f\|_2\leq 1$. This yields 
 \begin{multline*}
\left(\frac{1}{m({B_{x}(r_1)})}\fint\limits_{ T-\delta r_1^2}^{T+\delta r_1^2}\|{B_{x}(r_1)} P_t^\omega f\|_{2q}^{2q}\drm t\right)^{\frac{1}{q}}
\\
\leq \left(\frac{m({B_{x}(r)})}{m(B_x(r_1))}
\right)^{\frac{1}{q}}\frac{C_{n,\phi}(1+\delta r^2h(\omega))^{\frac{n}{2}+1}  }{\delta^{\frac{n}{2}} m({B_{x}(r)})}
\fint\limits_{T- \delta r_2^2}^{T+\delta r_2^2}\|{B_{x}(r)} P_t^\omega f\|_{2}^{2}\drm t.
\end{multline*}
Clearly, $\|{B_{x}(r)} P_t^\omega f\|_{2}\leq \|{B_{x}(r)} P_t^\omega f\|_{2,2}$.
For general $f\in\ell^2(X,m)$ with $\|f\|_2\leq1$, we split $f=f_+-f_-$, where $f=f\vee 0$ and $f_-=(-f)\vee 0$ and apply the estimate above to each summand separately, resulting in an extra factor two. Taking the supremum on the left-hand side and noting that the map $\tau\mapsto \|B_x(r_1)P_\tau^\omega f\|_{w(r)}$ is continuous leads to the result by an application of the dominated convergence theorem.
\eat{\begin{multline*}
\left(\frac{m(B_x(r_1))}{m({B_{x}(r)})}\frac{r_1^2}{r^2}\right)^{\frac{1}{w}}\left(\frac{1}{m({B_{x}(r_1)})}\fint\limits_{ T-\delta r_1^2}^{T+\delta r_1^2}\sum_{B_{x}(r_1)} mv_t^{2w}\drm t\right)^{\frac{1}{w}}
\\
\leq \left(\frac{m(B_x(r_1))}{m({B_{x}(r/2)})}\frac{2r_1^2}{(r/2)^2}\right)^{\frac{1}{w}}\left(\frac{1}{m({B_{x}(r_1)})}\fint\limits_{ T-\delta r_1^2}^{T+\delta r_1^2}\sum_{B_{x}(r_1)} mv_t^{2w}\drm t\right)^{\frac{1}{w}}
\\
\leq 
\left(\frac{1}{m({B_{x}(r/2)})}\fint\limits_{ T-\delta (r/2)^2}^{T+\delta (r/2)^2}\sum_{B_{x}(r/2)} mv_t^{2w}\drm t\right)^{\frac{1}{w}}
\\
\leq 
 \Phi_x(n,r)^{2n}\frac{C_{n,\phi}(1+\delta r^2h(\omega))^{\frac{n}{2}+1}  }{\delta^{\frac{n}{2}} m({B_{x}(r)})}
\fint\limits_{T- \delta r^2}^{T+\delta r^2}\sum_{B_{x}(r)} mv_t^{2}\drm t
\end{multline*}
}
\end{proof}

\eat{In order to track the constants we decided to present the slightly adapted proof in the appendix. 
\begin{proposition}[Moser in time and space, cf.~{\cite[Theorem~2.7]{KellerRose-22a}}]\label{prop:moser}
Let $ x\in X $, $T\in\RR$, 
$n>2$, $\delta\in(0,1]$, $r_2\geq 2r_1\geq 256 S$ and constants $\phi,\Phi\geq 1$.
Assume $S_{\phi}(n,r_1,r_2)$ in $ x$.
 For all non-negative $\Delta_\omega$-subsolutions $v\geq 0$ on $[T-r_2^2,T+r_2^2]\times B_{x}(r_2)$, we have
 \[
\left(\frac{1}{m({B_{x}(r_1)})}\fint\limits_{ T-\delta r_1^2}^{T+\delta r_1^2}\sum_{B_{x}(r_1)} mv_t^{2w}\drm t\right)^{\frac{1}{w}}\leq C_{n,C_S}\left(\frac{r_2}{r_1}\right)^{2n^2}
 \frac{(1+\delta r_2^2h(\omega))^{\frac{n}{2}+1}  }{\delta^{\frac{n}{2}}m({B_{x}(r_2)})}
\fint\limits_{T- \delta r_2^2}^{T+\delta r_2^2}\sum_{B_{x}(r_2)} mv_t^{2}\drm t,
\]
where $C_{n,C_S}:=2^{19n^2}C_S^{2n}A'(n)^{2n}$ 
and 
\[
{w=w(r_2)={\left(\frac{r_2}{2^6S}\right)^{\frac{1}{n}}}.}
\]
\end{proposition}

\begin{proof}
We follow the proof of \cite[Theorem~2.7]{KellerR-22a}. Set $K=\lfloor\ln((r_2-r_1)/8S)\rfloor$ and 
\[
\rho_k:=r_1+2^{-k}(r_2-r_1),\qquad k\geq 0,
\]
and note that, since $ r_1\leq r_2/2$, for all $k\in\{0,\ldots, K\}$, we have 
\[
\rho_{k+1}+4S\leq \rho_k,\qquad 
\rho_k\geq \rho_{k+1}\geq\rho_k/2,\]
\[
\rho_{k}-\rho_{k+1}-2S\geq \frac{\rho_k-\rho_{k+1}}{2}=\frac{r_2-r_1}{2^{k+2}}\geq \frac{r_2}{2^{k+3}},
\]
and
\[\rho_k^2-\rho_{k+1}^2=(\rho_k+\rho_{k+1})(\rho_k-\rho_{k+1})\geq \frac{r_2-r_1}{2^k}\frac{r_2-r_1}{2^{k+1}}=\frac{(r_2-r_1)^2}{2^{2k+2}}\geq \frac{r_2^2}{2^{2k+4}}.
\]
Therefore, we can perform all calculations until the bottom of p.~20 in \cite{KR24a} and obtain
\[
\left(\frac{1}{m(B(r_1))}\fint_{T-\delta r_1^2}^{T+\delta r_1^2}\sum_{B(r_1)}mv_t^{2\alpha^K}\right)^{1/\alpha^K}
\leq 
\frac{C_1}{m(B(r_2))}
\fint_{T-\delta r_2^2}^{T+\delta r_2^2}\sum_{B(r_2)}mv_t^{2}
\]
where 
\[
C_1=\left(\frac{\rho_K^2}{r_1^2}
\frac{m(B(\rho_K))}{m(B(r_1))}\right)^{1/\alpha^K}
\prod_{k=0}^{K-1}C_{0,k}^{1/\alpha^{k+1}}
\]
and 
\begin{multline*}
C_{0,k}=\frac{5}{2}C_S999^\alpha\alpha^{2k\alpha}\frac{(\rho_k+\rho_{k+1})^2}{4}\frac{m(B(\rho_k))}{m(B(\rho_{k+1}))}\left(\frac{m(B(\rho_k))}{m(B(\tfrac{\rho_k+\rho_{k+1}}{2}))}\right)^{\frac{2}{n}}
\\
\cdot \frac{(2\delta\rho_k^2)^\alpha}{\delta(\rho_k^2+\rho_{k+1}^2)}
\left(h(\omega)+\frac{8}{(\rho_k-\rho_{k+1}-2S)^2}+\frac{1}{\delta(\rho_k^2-\rho_{k+1}^2)}\right)^\alpha
\end{multline*}
We have 
\[
\left(\frac{\rho_K^2}{r_1^2}
\frac{m(B(\rho_K))}{m(B(r_1))}\right)^{1/\alpha^K}\leq \left(\frac{r_2^2}{r_1^2}
\frac{m(B(r_2))}{m(B(r_1))}\right)^{1/\alpha^K}\leq \left(\frac{r_2^2}{r_1^2}
\Phi\right)^{1/\alpha^K}.
\]
Further, by the estimates above and $\alpha\leq 2$, we have 
\[
\frac{(\rho_k+\rho_{k+1})^2}{4}\leq \rho_k^2\le r_2^2,\qquad \frac{(2\delta\rho_k^2)^\alpha}{\delta(\rho_k^2+\rho_{k+1}^2)}\leq 2^\alpha (\delta \rho_k^2)^{\alpha-1}\leq 2^2 (\delta r_2^2)^{\alpha-1},
\]
and by the doubling property, $\rho_k/2\geq r_1$, and domain monotonicity of $m$
\[
\frac{m(B(\rho_k))}{m(B(\rho_{k+1}))}
\leq \frac{m(B(r_2))}{m(B(r_1))}\leq \Phi,\qquad 
\frac{m(B(\rho_k))}{m(B(\tfrac{\rho_k+\rho_{k+1}}{2}))}\leq \frac{m(B(\rho_k))}{m(B(\tfrac{\rho_k}{2}))}\leq \Phi.
\]
Hence, using $\alpha=1+2/n\leq 2$ and $5/2 \cdot 999^\alpha\cdot \alpha^{2k\alpha}\leq 2^2\cdot 2^{10\alpha}\cdot 2^{2k\alpha}\leq 2^2\cdot 2^{20}\cdot 2^{4k}=2^{22+4k}$, we obtain, using $ r_1\leq r_2/2$, 
\begin{multline*}
C_{0,k}\leq 2^{22+4k}C_Sr_2^2\Phi^\alpha
2^2(\delta r_2^2)^{\alpha-1}
\left(h(\omega)+\frac{82^{2k+6}}{r_2^2}+\frac{2^{2k+4}}{\delta r_2^2}\right)^\alpha
\\
\leq 
2^{22+4k}C_Sr_2^2\Phi^\alpha
2^2(\delta r_2^2)^{\alpha-1}
\left(h(\omega)+\frac{2^{2k+9}}{\delta r_2^2}+\frac{2^{2k+9}}{\delta r_2^2}\right)^\alpha
\\
=2^{24+9\alpha+(4+2\alpha)k}C_S\Phi^\alpha(1+\delta r_2^2h(\omega))^\alpha.
\end{multline*}
Hence, since  $\sum_{k=0}^{K-1}\frac{1}{\alpha^{k+1}}=\frac{n}{2}(1-\alpha^{-K})\leq \frac{n}{2}$ and $\sum_{k=0}^{K-1}\frac{k}{\alpha^k}\leq \frac{\alpha^2}{(\alpha-1)^2}\leq n^2$, and $1<n/2$,
\begin{multline*}
\prod_{k=0}^{K-1}C_{0,k}^{1/\alpha^{k+1}}\leq 
\prod_{k=0}^{K-1}\left(2^{24+9\alpha+(4+2\alpha)k}C_S\Phi^\alpha(1+\delta r_2^2h(\omega))^\alpha\right)^{1/\alpha^{k+1}}
\\
=\left(2^{24+9\alpha}C_S\Phi^\alpha(1+\delta r_2^2h(\omega))^\alpha\right)^{\sum_{k=0}^{K-1}\frac{1}{\alpha^{k+1}}}2^{(4+2\alpha)\sum_{k=0}^{K-1}\frac{k}{\alpha^{k+1}}}
\\
\leq 
\left(2^{24+9\alpha}C_S\Phi^\alpha(1+\delta r_2^2h(\omega))^\alpha\right)^{\frac{n}{2}}2^{(4+2\alpha)n^2}
\\
=
2^{6n^2+\frac{73n}{4}+\frac92}C_S^\frac{n}{2}\Phi^{\frac{n}{2}+1}(1+\delta r_2^2h(\omega))^{\frac{n}{2}+1}
\leq 
2^{6n^2+\frac{73n^2}{8}+\frac98n^2}C_S^n\Phi^{n}(1+\delta r_2^2h(\omega))^{\frac{n}{2}+1}
\\
=2^{\frac{130n^2}{8}}C_S^n\Phi^{n}(1+\delta r_2^2h(\omega))^{\frac{n}{2}+1}
\leq 2^{17n^2}C_S^n\Phi^{n}(1+\delta r_2^2h(\omega))^{\frac{n}{2}+1}.
\end{multline*}
Hence
\begin{multline*}
C_1=\left(\frac{\rho_K^2}{r_1^2}
\frac{m(B(\rho_K))}{m(B(r_1))}\right)^{1/\alpha^K}
\prod_{k=0}^{K-1}C_{0,k}^{1/\alpha^{k+1}}
\leq 
\left(\frac{r_2^2}{r_1^2}
\Phi\right)^{1/\alpha^K}2^{17n^2}C_S^n\Phi^{n}(1+\delta r_2^2h(\omega))^{\frac{n}{2}+1}
\\
=
\left(\frac{r_2^2}{r_1^2}\right)^{1/\alpha^K}2^{17n^2}C_S^n\Phi^{n+1/\alpha^K}(1+\delta r_2^2h(\omega))^{\frac{n}{2}+1}
\leq \left(\frac{r_2^2}{r_1^2}\right)^{1/\alpha^K}2^{17n^2}(C_S\Phi)^{2n}(1+\delta r_2^2h(\omega))^{\frac{n}{2}+1}
\end{multline*}
Next, since $K\geq \ln(r_2-r_1)/8S -1=\ln (r_2-r_1)/8S\euler\geq \ln r_2/(2^6S)$ and $1\geq 2^6S/r_2$ and $\ln \alpha\geq 2/(n+2)\geq 1/n $ since $2<n$, we have 
\[
\frac{1}{\alpha^K}\leq \frac{1}{\alpha^{\ln r_2/(2^6S)}}=\left(\frac{2^6S}{r_2}\right)^{\ln\alpha}\leq \left(\frac{2^6S}{r_2}\right)^{\frac{1}{n}}
\]
Since $r_1\geq 2^6S$ we have 
\[
\left(\frac{r_2^2}{r_1^2}\right)^{1/\alpha^K}\leq h(r_2/2^6S)
\] 
where $h(x)=x^{2/x^{1/n}}\leq h(\euler^n)=\euler^{2n/\euler}\leq 2^{2n}\leq 2^{n^2}$.
Theorem~\ref{thm:adapteddoubling} yields the claim with $\Phi=A'(n)(r_2/r_1)^n$.
\end{proof}

}

\eat{
\begin{corollary}[Moser in time and space 2]\label{thm:KR22-subsolution2}
Let $ x\in X $, $T\in\RR$, 
$n>2$, $\delta\in(0,1]$, $r_2\geq 2r_1\geq 256S$. Assume $S(n,r_1,r_2)$ in $x$.
 Then, for all $\omega\in \ell^\infty(X)$, we have 
 \[
\left(\frac{1}{m({B_{x}(r_1)})}\fint\limits_{ T-\delta r_1^2}^{T+\delta r_1^2}\|{B_{x}(r_1)} P_t^\omega\|_{2,2w(r_2)}^{2w(r_2)}\drm t\right)^{\frac{1}{w(r_2)}}
\leq C
 \frac{(1+\delta r_2^2h(\omega))^{\frac{n}{2}+1}  }{\delta^{\frac{n}{2}} m({B_{x}(r_2)})}
\fint\limits_{T- \delta r_2^2}^{T+\delta r_2^2}\|{B_{x}(r_2)} P_t^\omega\|_{2,2}^{2}\drm t.
\]
\end{corollary}
}

\subsection{Integrated Davies trick and averaged heat kernel}\label{sec:davies}

In this section we  derive averaged Gaussian upper heat kernel bounds from $\ell^2-\ell^p$-bounds for positive solutions of sandwiched heat equations via an integrated version of Davies' trick. Recall that in Section~\ref{sec:FKVM} we defined 
for 
$\omega\in \ell^\infty(X)$ the sandwiched semigroup $(P_t^{\omega})_{t\geq 0}$ in $\ell^2(X,m)$ by
\[
P_t^{\omega}:=\euler^{\omega}P_t\euler^{-\omega}.
\]

We have the following general bound for the operator norms of the sandwiched semigroup. 
\begin{theorem}[Integrated Davies trick]\label{lem:integratedDavies} For any $q\in[1,\infty]$, denote by $q'$ its H\"older conjugate and let $0\leq V,W\in\ell^\infty(X),$ $I\subset \RR$. Then, for any $q$-norm with respect to any measure on $X$, we have 
\begin{multline*}
\left(
\fint_I \|VP_{2\tau}W\|_{q',q}^\frac{q}{2}\drm \tau 
\right)^\frac{2}{q}
=
\left(
\fint_I \|WP_\tau V\|_{q',q}^\frac{q}{2}\drm \tau 
\right)^{\frac{2}{q}}
\\
\leq 
\inf_{\omega\in\ell^\infty(X)}
\sup_{\supp V}\euler^{-\omega}
\sup_{\supp W}\euler^{\omega}
\left(
\fint_I
\|V P_\tau^\omega\|_{2,q}^q
\drm \tau
\right)^\frac{1}{q}
\left(
\fint_I
\|W P_\tau^{-\omega}\|_{2,q}^q
\drm \tau
\right)^\frac{1}{q}.
\end{multline*}
\end{theorem}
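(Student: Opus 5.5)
The plan is to run the classical Davies factorization at each fixed time $\tau$ and only afterwards integrate in time, using the Cauchy--Schwarz inequality to split the time average. Fix $\omega\in\ell^\infty(X)$. By the semigroup property, and since $\euler^{\pm\omega}$ are bounded multiplication operators, we have
\[
VP_{2\tau}W=V\euler^{-\omega}\,P_\tau^{\omega}\,P_\tau^{\omega}\,\euler^{\omega}W .
\]
Factor this through $\ell^2$: first $\ell^{q'}\to\ell^2$, then $\ell^2\to\ell^q$. This gives
\[
\|VP_{2\tau}W\|_{q',q}\le \|V\euler^{-\omega}P_\tau^\omega\|_{2,q}\,\|P_\tau^\omega\euler^{\omega}W\|_{q',2}.
\]
Since $V\euler^{-\omega}=(\euler^{-\omega}\Eins_{\supp V})V$, and multiplication by a bounded function $g$ has norm at most $\sup|g|$ on every $\ell^q$, the first factor is at most $\sup_{\supp V}\euler^{-\omega}\,\|VP_\tau^\omega\|_{2,q}$. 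For the second factor we use duality with respect to the $\ell^2(X,m)$ pairing. The operator $P_\tau$ is self-adjoint, so $(P_\tau^\omega)^*=\euler^{-\omega}P_\tau\euler^{\omega}=P_\tau^{-\omega}$. Hence
\[
\|P_\tau^\omega\euler^{\omega}W\|_{q',2}=\|W\euler^{\omega}P_\tau^{-\omega}\|_{2,q}\le \sup_{\supp W}\euler^{\omega}\,\|WP_\tau^{-\omega}\|_{2,q}.
\]
The same duality, $\|A\|_{q',q}=\|A^*\|_{q',q}$ with $(VP_\tau W)^*=WP_\tau V$, gives the stated identity between the two averaged quantities on the left-hand side, up to the reparametrization of the time variable.

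Next, raise the pointwise-in-$\tau$ bound to the power $q/2$ and average over $I$. Apply the Cauchy--Schwarz inequality in $\tau$:
\[
\fint_I a_\tau^{q/2}b_\tau^{q/2}\drm\tau\le\Big(\fint_I a_\tau^{q}\drm\tau\Big)^{1/2}\Big(\fint_I b_\tau^{q}\drm\tau\Big)^{1/2},
\]
where $a_\tau=\|VP_\tau^\omega\|_{2,q}$ and $b_\tau=\|WP_\tau^{-\omega}\|_{2,q}$. Taking the $2/q$-th power yields exactly the product of the two $\big(\fint_I\cdot\,\drm\tau\big)^{1/q}$ terms, with the prefactor $\sup_{\supp V}\euler^{-\omega}\sup_{\supp W}\euler^{\omega}$. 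Since $\omega$ was arbitrary, we then take the infimum over $\omega$. The case $q=\infty$ is handled in the same way with essential suprema in time.

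I expect the main obstacle to be the technical justification rather than the algebra. First, the maps $\tau\mapsto\|VP_\tau^{\pm\omega}\|_{2,q}$ must be measurable; I would obtain this from lower semicontinuity, writing each norm as a supremum of continuous functions $\tau\mapsto|\langle g,VP^{\pm\omega}_\tau f\rangle|$ over countable dense sets. Second, the duality step is only valid when the $\ell^q$-norms and the adjoint are taken with respect to the same measure, namely $m$, which is the one for which $P_\tau$ is symmetric. For a different measure $\mu$, I would either conjugate by the density $m/\mu$ or restrict the claim to the case where the pairing is that of $\mu$.
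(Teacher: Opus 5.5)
Your argument is correct and is essentially the paper's proof: conjugate by $\euler^{\pm\omega}$ and pull out $\sup_{\supp V}\euler^{-\omega}\sup_{\supp W}\euler^{\omega}$, factor $P^\omega_{2\tau}=P^\omega_\tau P^\omega_\tau$ through $\ell^2$, identify the second factor via $(P^\omega_\tau)^*=P^{-\omega}_\tau$, and apply Cauchy--Schwarz in $\tau$ before taking the infimum over $\omega$. Your caveats are well founded and go beyond what the paper's one-line ``by duality'' addresses: the left-hand identity only holds after the substitution $\tau\mapsto 2\tau$ (i.e., averaging over $2I$), and the duality steps need the norms taken with respect to $m$, the measure for which $P_\tau$ is symmetric, rather than an arbitrary measure.
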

\begin{proof}
The first identity follows directly from duality, and we are left with the proof of the estimate. Let $\omega\in\ell^\infty(X)$. Since $P_{2t}$ is positivity preserving for any $t>0$, we obtain
\[
\|VP_{2\tau}W\|_{q',q}=\|V\euler^{-\omega}P_{2\tau}\euler^{\omega}W\|_{q',q}\leq \sup_{\supp V}\euler^{-\omega}
\sup_{\supp W}\euler^{\omega}\|VP_{2\tau}^\omega W\|_{q',q}.
\]
Further, the semigroup property of $(P_t^\omega)_{t\geq 0}$ implies 
\[
\|VP_{2\tau}^\omega W\|_{q',q}=\|VP_\tau^\omega P_\tau^\omega W\|_{q',q}\leq \|VP_\tau^\omega\|_{2,q}\|P_\tau^\omega W\|_{q',2}.
\]
As the adjoint of $P_\tau W$ is $WP_\tau^{-\omega}$, we have 
\[
\|P_\tau^\omega W\|_{q',2}=\|WP_\tau^{-\omega}\|_{2,q}.
\]
Therefore, by the Cauchy-Schwarz inequality,
\begin{multline*}
\left(
\fint_I \|VP_{2\tau}W\|_{q',q}^\frac{q}{2}\drm \tau 
\right)^\frac{2}{q}
\leq 
\sup_{\supp V}\euler^{-\omega}
\sup_{\supp W}\euler^{\omega}
\left(
\fint_I \|VP_{2\tau}^\omega W\|_{q',q}^\frac{q}{2}\drm \tau 
\right)^\frac{2}{q}
\\
\leq 
\sup_{\supp V}\euler^{-\omega}
\sup_{\supp W}\euler^{\omega}
\left(
\fint_I \|VP_\tau^\omega\|_{2,q}^\frac{q}{2}\|WP_\tau^{-\omega}\|_{2,q}^\frac{q}{2}\drm \tau 
\right)^\frac{2}{q}
\\
\leq 
\sup_{\supp V}\euler^{-\omega}
\sup_{\supp W}\euler^{\omega}
\left(
\fint_I \|VP_\tau^\omega\|_{2,q}^q\right)^\frac{1}{q}
\left(\fint_I\|WP_\tau^{-\omega}\|_{2,q}^q\drm \tau 
\right)^\frac{1}{q}.
\end{multline*}
Since $\omega\in\ell^\infty(X)$ was arbitrary, the claim follows.
\end{proof}

\eat{
For subsets $\emptyset\neq A,B\subset X$ and $p,q\in[1,\infty]$, and a bounded  operator $P\colon \ell^{q'}\left(A,\frac{m}{m(A)}\right)\to \ell^q\left(B,\frac{m}{m(B)}\right)$, we denote
\[
\tvert{P}_{A,B,p,q}:=\left\lVert P\colon \ell^{p}\left(A,\frac{m}{m(A)}\right)\to \ell^q\left(B,\frac{m}{m(B)}\right)\right\rVert.
\]

\begin{corollary}[Integrated Davies trick - normalized]\label{cor:integratedDaviesNorm} For any $q\in[1,\infty]$, denote by $q'$ its H\"older conjugate and let $A,B\subset X$, and $I\subset \RR$. Then, for any $q$-norm with respect to any measure on $X$, we have 
\begin{multline*}
\left(
\fint_I \tvert{P_{2\tau}}_{A,B,q',q}\drm \tau 
\right)^\frac{2}{q}
\\
\leq 
\inf_{\omega\in\ell^\infty(X)}
\sup_{A}\euler^{-\omega}
\sup_{B}\euler^{\omega}
\left(
\fint_I
\tvert{P_\tau^\omega}_{A,A,2,q}^q
\drm \tau
\right)^\frac{1}{q}
\left(
\fint_I
\tvert{P_\tau^{-\omega}}_{B,B,2,q}^q
\drm \tau
\right)^\frac{1}{q}.
\end{multline*}
\end{corollary}

}


The upper bound in Theorem~\ref{lem:integratedDavies} depends on the best choice of a function in a minimizaton problem. In the pointwise heat kernel bound case, Davies,  \cite{Davies-93,Davies-87}, chose a certain family of Lipschitz functions and minimized with respect to the Lipschitz constant. We will use this argument as well.\\
We need an integrated maximum principle for graphs with intrinsic metric proven in \cite{BauerHuaYau-17}. We say that $\omega \in \cC(X)$ is $\kappa$-Lipschitz if $\omega$ is Lipschitz continuous with respect to the intrinsic metric $\rho$ with Lipschitz constant $\kappa$. Recall that $ S $ denotes the jump size of the intrinsic metric and
\[
\Lambda=\inf \spec(\Delta).
\]
We have the following.
\begin{proposition}[{\cite[Lemma~3.3]{BauerHuaYau-17}}]\label{lem:cauchy}Let $\kappa\geq 0$ and $f\in \ell^2(X,m)$. For any $\kappa$-Lipschitz function $\omega\in \cC(X)$, the function 
\begin{equation*}
\Phi:[0,\infty)\to[0,\infty),\quad \ t\mapsto \exp\left(2\Lambda t -\frac{2}{S^2}\left(\cosh\left(\kappa S\right)-1\right)t\right)\ \Vert \euler^{\omega}P_tf\Vert_2^2
\end{equation*}
is non-increasing.
\end{proposition}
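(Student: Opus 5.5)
The plan is a Gronwall-type differential inequality for $\|\euler^{\omega}P_tf\|_2^2$. The defect of $\cE$ under conjugation by $\euler^{\omega}$ is controlled pointwise using the Lipschitz property of $\omega$ together with the intrinsic-metric condition $\sum_y b(x,y)\rho^2(x,y)\leq m(x)$.

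\textbf{Step 1: reduction to bounded $\omega$.} First I would assume $\omega$ is bounded, and remove this at the end. The truncations $\omega_N:=(\omega\wedge N)\vee(-N)$ are still $\kappa$-Lipschitz and bounded. For $0\leq s\leq t$ the inequality $\Phi_N(t)\leq\Phi_N(s)$ then passes to the limit $N\to\infty$ by monotone convergence. For $\omega$ bounded and $u_t:=P_tf$, the map $t\mapsto\|\euler^{\omega}u_t\|_2^2$ is differentiable for $t>0$, since $u_t$ lies in the domain of $\Delta$. Set $g:=\euler^{\omega}u_t$, which lies in the form domain because multiplication by a bounded Lipschitz function preserves it. Then
\[
\frac{d}{dt}\|\euler^{\omega}u_t\|_2^2=-2\langle \euler^{2\omega}u_t,\Delta u_t\rangle=-2\,\cE(\euler^{2\omega}u_t,u_t),
\]
where the pairing with $\Delta$ is justified by Green's formula and local finiteness.

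\textbf{Step 2: algebraic identity.} Writing $u=\euler^{-\omega}g$ and expanding the product $(\euler^{\omega(x)}g(x)-\euler^{\omega(y)}g(y))(\euler^{-\omega(x)}g(x)-\euler^{-\omega(y)}g(y))$ gives
\[
(g(x)-g(y))^2-2g(x)g(y)\bigl(\cosh(\omega(x)-\omega(y))-1\bigr).
\]
Summing with weights $b(x,y)/2$ yields
\[
\cE(\euler^{2\omega}u,u)=\cE(g)-\sum_{x,y}b(x,y)\,g(x)g(y)\bigl(\cosh(\omega(x)-\omega(y))-1\bigr).
\]

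\textbf{Step 3: bounding the two terms.} For the first term, $\cE(g)\geq\Lambda\|g\|_2^2$ by the spectral theorem. For the second, I would use $2g(x)g(y)\leq g(x)^2+g(y)^2$ together with the symmetry of $b$. I would also use that $s\mapsto(\cosh s-1)/s^2$ is even and increasing in $|s|$. Since $|\omega(x)-\omega(y)|\leq\kappa\rho(x,y)\leq\kappa S$ whenever $b(x,y)>0$, this gives
\[
\cosh(\omega(x)-\omega(y))-1\leq\frac{\cosh(\kappa S)-1}{S^2}\,\rho(x,y)^2 .
\]
The intrinsic-metric condition then bounds the subtracted sum by $\frac{\cosh(\kappa S)-1}{S^2}\|g\|_2^2$.

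\textbf{Step 4: conclusion.} Combining the steps,
\[
\frac{d}{dt}\|g\|_2^2\leq\Bigl(-2\Lambda+\tfrac{2}{S^2}(\cosh(\kappa S)-1)\Bigr)\|g\|_2^2 .
\]
This is exactly $\Phi'\leq 0$ on $(0,\infty)$, and continuity of $\Phi$ at $t=0$ gives monotonicity on $[0,\infty)$.

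The main obstacle is the justification in Step 1, not the computation. One must make sure that $\euler^{2\omega}u_t$ may be paired with $\Delta u_t$ via $\cE$ and that $\euler^{\omega}u_t$ lies in the form domain, so that $\cE(g)\geq\Lambda\|g\|_2^2$ applies. Bounded $\omega$ handles this directly, and the truncation argument passes the monotonicity to general $\kappa$-Lipschitz $\omega$.
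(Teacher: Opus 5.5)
Your proof is correct and follows the standard argument behind the cited lemma of Bauer--Hua--Yau; the paper itself only quotes that lemma. The steps are: differentiate $\|\euler^{\omega}P_tf\|_2^2$, rewrite $\cE(\euler^{2\omega}u,u)$ via the $\cosh$ identity, bound $\cosh(\omega(x)-\omega(y))-1\le S^{-2}(\cosh(\kappa S)-1)\rho(x,y)^2$ using the monotonicity of $(\cosh s-1)/s^2$ and the intrinsic-metric condition (the same estimate that gives $h(\omega)\le\gamma(\kappa S)$ later in the paper), and use $\cE(g)\ge\Lambda\|g\|_2^2$. One small correction: $\euler^{2\omega_N}$ is not monotone in $N$ on all of $X$, so in the truncation step use monotone convergence on $\{\omega>0\}$ and dominated convergence (with majorant $|P_tf|^2$) on $\{\omega\le 0\}$, and allow $\Phi$ to take the value $+\infty$ when $\omega$ is unbounded.
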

Recall from Section~\ref{sec:FKVM} the quantity
\[
h(\omega)=\sup_{x\in X}\sum_{x\in X}\frac{b(x,y)}{m(x)}\vert (\euler^{\omega(x)}-\euler^{\omega(y)})(\euler^{-\omega(x)}-\euler^{-\omega(y)})\vert
\]
for any $\omega\in\ell^\infty(X)$.
The proof of the following proposition is contained in the proof of \cite[Theorem~5.3]{KellerRose-22a}.
\begin{proposition}[\cite{KellerRose-22a}]\label{prop:BHY}For $b\geq a\geq 0$, $\kappa>0$ and $\omega\in\ell^\infty(X)$ $\kappa$-Lipschitz, we have 
\begin{align*}
\fint_{a}^{b}\Vert P_\tau^{\omega}\Vert_{2,2}^2 \drm \tau
\leq \euler^{-2\Lambda a+\gamma(\kappa S)b},
\end{align*}
and 
\[
h(\omega)\leq \gamma(\kappa S)
:=\frac{2}{S^2}\left(\cosh\left(\kappa S\right)-1\right).
\]
\end{proposition}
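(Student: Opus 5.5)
The first estimate follows from the monotonicity statement in Proposition~\ref{lem:cauchy}, applied to a twisted initial datum. The bound on $h(\omega)$ is a pointwise computation combining the Lipschitz property, a monotonicity property of $\cosh$, and the intrinsic metric condition.

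\emph{First estimate.} Fix $g\in\ell^2(X,m)$ and put $f:=\euler^{-\omega}g$. Since $\omega\in\ell^\infty(X)$, we have $f\in\ell^2(X,m)$, and $\euler^{\omega}P_\tau f=P^\omega_\tau g$. By Proposition~\ref{lem:cauchy}, the function $\Phi$ is non-increasing, so $\Phi(\tau)\le\Phi(0)=\|g\|_2^2$. Since $\gamma(\kappa S)=\frac{2}{S^2}(\cosh(\kappa S)-1)$ is exactly the constant appearing in $\Phi$, this reads
\[
\|P^\omega_\tau g\|_2^2\le \euler^{-2\Lambda\tau+\gamma(\kappa S)\tau}\|g\|_2^2 .
\]
Taking the supremum over $\|g\|_2\le1$ gives $\|P^\omega_\tau\|_{2,2}^2\le \euler^{-2\Lambda\tau+\gamma(\kappa S)\tau}$. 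For $\tau\in[a,b]$ we use $\Lambda\ge0$ (as $\Delta\ge0$) and $\gamma\ge0$ to bound the exponent by $-2\Lambda a+\gamma(\kappa S) b$. Averaging over $[a,b]$ then yields the claim; the degenerate case $a=b$ is read as the pointwise bound.

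\emph{Bound on $h(\omega)$.} For $x,y\in X$ set $d:=\omega(x)-\omega(y)$. Expanding the product gives
\[
(\euler^{\omega(x)}-\euler^{\omega(y)})(\euler^{-\omega(x)}-\euler^{-\omega(y)})=2-\euler^{d}-\euler^{-d},
\]
so its absolute value is $2(\cosh d-1)$. If $b(x,y)>0$, then $|d|\le\kappa\rho(x,y)\le\kappa S$ by the Lipschitz property and the finite jump size. The function $s\mapsto(\cosh s-1)/s^2=\sum_{k\ge1}s^{2k-2}/(2k)!$ is non-decreasing on $[0,\infty)$. Since $\cosh$ is also increasing on $[0,\infty)$, this gives
\[
\cosh d-1\le\cosh(\kappa\rho(x,y))-1\le \frac{\rho(x,y)^2}{S^2}\bigl(\cosh(\kappa S)-1\bigr).
\]
Summing against $b(x,y)/m(x)$ and using the intrinsic metric condition $\sum_y b(x,y)\rho^2(x,y)\le m(x)$, we obtain
\[
\sum_y \frac{b(x,y)}{m(x)}\,2(\cosh d-1)\le \frac{2}{S^2}\bigl(\cosh(\kappa S)-1\bigr)=\gamma(\kappa S).
\]
Taking the supremum over $x$ gives $h(\omega)\le\gamma(\kappa S)$.

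The only delicate point is the monotonicity of $(\cosh s-1)/s^2$. It is what converts the squared-distance bound from the intrinsic metric into the $\cosh$-type constant $\gamma(\kappa S)$, and it follows immediately from the power series above. Everything else is bookkeeping.
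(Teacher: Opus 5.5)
Your proof is correct and is essentially the argument the paper defers to in \cite{KellerRose-22a}. You get the averaged semigroup bound by applying the Bauer--Hua--Yau monotonicity of Proposition~\ref{lem:cauchy} to $f=\euler^{-\omega}g$ and then averaging, using $\Lambda\ge 0$ and $\gamma(\kappa S)\ge 0$; the bound $h(\omega)\le\gamma(\kappa S)$ follows from the identity $|(\euler^{\omega(x)}-\euler^{\omega(y)})(\euler^{-\omega(x)}-\euler^{-\omega(y)})|=2(\cosh d-1)$, the monotonicity of $(\cosh s-1)/s^2$, and the intrinsic metric condition.
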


The next theorem provides the technical step in finding off-diagonal averaged heat kernel bounds by finding an appropriate candidate for the optimization problem in Theorem~\ref{lem:integratedDavies}.

\begin{theorem}\label{thm:abstractbound}
Let $x,y\in X$, $p\in[1,\infty]$, $r_2\geq r_1\geq0$, $t>0$, and $\delta\in(0,1)$. Assume that for $z\in\{x,y\}$ and all $\omega\in\ell^\infty(X)$ there are $\phi_z(h)=\phi_z(p,r_1,r_2,\delta,t,h)>0$, monotone increasing in $h$, such that 
\[
\left(\fint_{t/2-\delta r_1^2}^{t/2+\delta r_1^2}\|{B_z(r_1)}P_\tau^\omega\|_{2,2p}^{2p}\drm \tau\right)^\frac{1}{p}
\leq \phi_z(h(\omega))^2\fint_{t/2-\delta  r_2^2}^{t/2+\delta r_2^2}\| P_\tau^\omega\|_{2,2}^2\drm \tau.
\] 
Then, for $t\geq 2r_1^2$,  we have
\[
\left(
\fint_{t-2\delta r_1^2}^{t+2\delta r_1^2} \|{B_x(r_1)}P_{\tau}{B_y(r_1)}\|_{(2p)',2p}^p\drm \tau
\right)^\frac{1}{p}
\leq 
\phi_x(\sigma)\phi_y(\sigma)\euler^{\sigma(\delta r_2^2-t/2)-\Lambda(t-2\delta r_1^2)-tS^{-2}\zeta(\rho(B_x(r_1),B_y(r_1))S/t)},
\]
where 
\[
\sigma=\sigma(\rho(B_x(r_1),B_y(r_1))S/t),\qquad \sigma(x)=\sqrt{1+x^2}-1.
\]
\end{theorem}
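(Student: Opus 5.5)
The plan is to apply the integrated Davies trick, Theorem~\ref{lem:integratedDavies}, with $V={B_x(r_1)}$, $W={B_y(r_1)}$, exponent $2p$ in place of $q$ and interval $I=[t/2-\delta r_1^2,t/2+\delta r_1^2]$. The substitution $\tau\mapsto 2\tau$ turns the left-hand side of the claim into
\[
\Big(\fint_I\|B_xP_{2\tau}B_y\|^{p}_{(2p)',2p}\drm\tau\Big)^{1/p},
\]
since the doubled interval is exactly $[t-2\delta r_1^2,t+2\delta r_1^2]$. This is the left-hand side of Theorem~\ref{lem:integratedDavies} with $q=2p$. Hence, for every admissible $\omega$, it is bounded by
\[
\sup_{B_x(r_1)}\euler^{-\omega}\sup_{B_y(r_1)}\euler^{\omega}\Big(\fint_I\|B_xP^\omega_\tau\|_{2,2p}^{2p}\Big)^{\frac1{2p}}\Big(\fint_I\|B_yP^{-\omega}_\tau\|_{2,2p}^{2p}\Big)^{\frac1{2p}}.
\]

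Next I bound each factor with the hypothesis, applied with $\pm\omega$. The hypothesis involves $\ell^\infty$ weights, so I will take $\omega$ to be a truncation of a $\kappa$-Lipschitz function. Truncation preserves the Lipschitz constant, and $h(-\omega)=h(\omega)$. Each integral factor is then at most $\phi_z(h(\pm\omega))\big(\fint_{t/2-\delta r_2^2}^{t/2+\delta r_2^2}\|P^{\pm\omega}_\tau\|_{2,2}^2\big)^{1/2}$.

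I then control the $\ell^2$ terms with Proposition~\ref{prop:BHY}, taking $a=t/2-\delta r_2^2$ and $b=t/2+\delta r_2^2$. This gives $h(\omega)\le\gamma(\kappa S)$, so monotonicity of $\phi_z$ yields $\phi_z(\gamma(\kappa S))$. The two square roots together contribute $\euler^{-\Lambda(t-2\delta r_2^2)+\gamma(\kappa S)(t/2+\delta r_2^2)}$. Here $a\ge0$ needs a small check. If $a<0$, I replace the lower endpoint by $0$ and use $\|P_\tau^\omega\|\le1$ near zero as a crude bound. The statement has $\Lambda(t-2\delta r_1^2)$, which differs from the $\Lambda(t-2\delta r_2^2)$ obtained here. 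I expect to get the stated form by applying Proposition~\ref{prop:BHY} more carefully, using the monotone quantity of Proposition~\ref{lem:cauchy}, or simply by accepting the weaker exponent since $\Lambda\ge0$. I will need to reconcile this bookkeeping.

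For the weight I choose $\omega=\kappa\rho(\cdot,B_x(r_1))$, suitably truncated and signed. Then $\sup_{B_x}\euler^{-\omega}\sup_{B_y}\euler^{\omega}\le\euler^{-\kappa\rho}$ with $\rho=\rho(B_x(r_1),B_y(r_1))$. Everything then reduces to minimizing over $\kappa$ the exponent
\[
-\kappa\rho+\tfrac{t}{2}\gamma(\kappa S)\cdot 2\cdot\tfrac12+\gamma(\kappa S)\delta r_2^2 .
\]
I will keep the $\delta r_2^2\gamma$ term fixed and optimize the main part $-\kappa\rho+\frac{t}{S^2}(\cosh(\kappa S)-1)$. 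The minimizer is $\kappa S=\arsinh(\rho S/t)$, and the minimum is $-\frac{t}{S^2}\zeta(\rho S/t)$. At this $\kappa$ we have $\cosh(\kappa S)-1=\sigma(\rho S/t)$, so $\gamma=2\sigma/S^2$. I expect the main obstacle to be matching the exact normalization, in particular writing the leftover terms as $\sigma(\delta r_2^2-t/2)$. This requires rewriting $\frac{t}{S^2}(\cosh-1)$ and splitting it as $t\sigma$ minus $t\sigma/2$ with the paper's conventions; $S$ may be normalized to $1$ in $\sigma$, or the statement may absorb factors. I would carefully recompute $\zeta(x)=x\arsinh x-\sigma(x)$ and track constants until the exponent matches the stated form. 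Taking the infimum over $\omega$ then finishes the proof.
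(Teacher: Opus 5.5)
Your route is the paper's: Theorem~\ref{lem:integratedDavies} with $q=2p$, $V=B_x(r_1)$, $W=B_y(r_1)$, $I=[t/2-\delta r_1^2,t/2+\delta r_1^2]$, then the hypothesis for $\pm\omega$, then Proposition~\ref{prop:BHY} on $[t/2-\delta r_2^2,t/2+\delta r_2^2]$. You use the weight $\omega=-\kappa\min\{\rho(B_x(r_1),\cdot),\rho\}$ with $\rho=\rho(B_x(r_1),B_y(r_1))$ and optimize in $\kappa$. Your computation is also right. The two half-time factors contribute $\gamma(\kappa S)(t/2+\delta r_2^2)$. The optimizer of $-\kappa\rho+\frac t2\gamma(\kappa S)$ is $\kappa_0S=\arsinh(\rho S/t)$, with $\gamma(\kappa_0S)=2\sigma/S^2$. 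The argument therefore proves
\[
\Big(\fint_{t-2\delta r_1^2}^{t+2\delta r_1^2}\|B_x(r_1)P_\tau B_y(r_1)\|_{(2p)',2p}^p\drm\tau\Big)^{\frac1p}\le\phi_x\big(\tfrac{2\sigma}{S^2}\big)\phi_y\big(\tfrac{2\sigma}{S^2}\big)\,\euler^{\frac{2\sigma}{S^2}\delta r_2^2-\Lambda(t-2\delta r_2^2)-tS^{-2}\zeta(\rho S/t)}.
\]
The gap is the step you defer: this does not imply the stated bound. The statement has an extra factor $\euler^{-\sigma t/2}$ and the term $\Lambda(t-2\delta r_1^2)\ge\Lambda(t-2\delta r_2^2)$, and it uses $\phi_z(\sigma)$ in place of $\phi_z(2\sigma/S^2)$. ``Accepting the weaker exponent since $\Lambda\ge0$'' gives a weaker inequality, not the claimed one. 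Normalizing $S=1$ does not help either: then $\gamma(\kappa_0S)=2\sigma$, and no $-\sigma t/2$ appears.

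No bookkeeping can close this, because the statement is false as written. Take $\ZZ$ with counting measure and $\rho(k,k\pm1)=1/\sqrt2$, so $S=1/\sqrt2$ and $\Lambda=0$. Take $p=1$ and $r_1=r_2=1/2$, so balls are singletons. The two time windows coincide and $\|B_z(r_1)P_\tau^\omega\|_{2,2}\le\|P_\tau^\omega\|_{2,2}$, so the hypothesis holds with $\phi_z\equiv1$. For $x=0$, $y=2t$ one has $\rho S/t=1$ and $\sigma=\sqrt2-1$. The left-hand side is then an average of $p_\tau(0,2t)=\euler^{-2\tau}I_{2t}(2\tau)$ over $|\tau-t|\le\delta/2$. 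By standard Bessel asymptotics this is $\asymp t^{-1/2}\euler^{-tS^{-2}\zeta(1)}$, whereas the claim would force an additional factor $\euler^{-(\sqrt2-1)t/2}$.

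The paper's proof reaches the stated form only through slips:
\begin{itemize}
\item it minimizes $-\kappa\rho+t\gamma(\kappa S)$ instead of $-\kappa\rho+\frac t2\gamma(\kappa S)$, while keeping $\gamma(\kappa S)(\delta r_2^2-t/2)$ as the remainder;
\item it asserts $\kappa_0=S^{-2}\arsinh(\rho S/t)$ and $\gamma(\kappa_0S)=\sigma$;
\item it replaces $r_2$ by $r_1$ in the $\Lambda$-term.
\end{itemize}
So state and prove your displayed inequality instead. It suffices downstream: with the choice $\delta=\frac12\wedge\frac1{r_2^2\sigma}$ of Lemma~\ref{lem:abstractbound2}, one has $\delta r_2^2\gamma(\kappa_0S)=\frac{2\sigma}{S^2}\delta r_2^2\le\frac2{S^2}$. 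This controls both the remainder in the exponent and the factor $(1+\delta r_2^2h)^{n/2+1}$, so that lemma survives with $S$-dependent constants.

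Finally, $\|P_\tau^\omega\|_{2,2}\le1$ is false for $\omega\neq0$. Instead of patching negative times, assume $t\ge2\delta r_2^2$, which the hypothesis needs anyway to make sense.
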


\begin{proof}
Apply Lemma~\ref{lem:integratedDavies} with $q=2p$, $V= {B_x(r_1)}$, $W= {B_y(r_1)}$, and $I=[t/2-\delta r_1^2,t/2+\delta r_1^2]$ to obtain
\begin{multline*}
\left(
\fint_{t/2-\delta r_1^2}^{t/2+\delta r_1^2} \| {B_x(r_1)}P_{2\tau} {B_y(r_1)}\|_{(2p)',2p}^p\drm \tau
\right)^\frac{1}{p}
\\
\leq 
\inf_{\omega\in\ell^\infty(X)}
\sup_{B_y(r_1)}\euler^{\omega}\sup_{B_x(r_1)}\euler^{-\omega}
\left(
\fint_I
\| {B_x(r_1)} P_\tau^\omega\|_{2,2p}^{2p}
\drm \tau
\right)^\frac{1}{2p}
\left(
\fint_I
\| {B_y(r_1)} P_\tau^{-\omega}\|_{2,2p}^{2p}
\drm \tau
\right)^\frac{1}{2p}.
\end{multline*}
For all $\kappa>0$, $\omega\in\ell^\infty(X)$ $\kappa$-Lipschitz, the assumption together with the integrated maximum principle from Proposition~\ref{prop:BHY} yields for $z\in\{x,y\}$
\[
\left(\fint_{t/2-\delta r_1^2}^{t/2+\delta r_1^2}\| {B_z(r_1)}P_\tau^\omega\|_{2,2p}^{2p}\drm \tau\right)^\frac{1}{2p}
\leq \phi_z(\gamma(\kappa S))\euler^{-\Lambda(t/2-\delta r_2^2)+\gamma(\kappa S)(t/4+\delta r_2^2/2)}.
\]
Note that if $\omega$ is $\kappa$-Lipschitz and above inequality holds, then $-\omega$ is $\kappa$-Lipschitz, and above estimate holds for $-\omega$ as well.

The same bound holds if we replace $\omega$ by $\hat \omega$. 

Next, for $\kappa>0$ we let
\[
\omega=-\kappa\min\{\rho(B_x(r_1),\cdot),\rho(B_x(r_1),B_y(r_1))\}.
\]
Then $\omega\in \ell^\infty$ is $\kappa$-Lipschitz and
\[
\sup_{B_y(r_1)}\euler^{\omega}\sup_{B_x(r_1)}\euler^{-\omega}
= \euler^{-\kappa\rho(B_x(r_1),B_y(r_1))}.
\]
Plugging in these infos leads to 
\begin{multline*}
\left(
\fint_{t/2-\delta r_1^2}^{t/2+\delta r_1^2} \| {B_x(r_1)}P_{2\tau} {B_y(r_1)}\|_{(2p)',2p}^p\drm \tau
\right)^\frac{1}{p}
\\
\leq 
\inf_{\substack{\omega\in\ell^\infty(X),\\ \omega \ \kappa-\text{Lipschitz}}}
\phi_x(\gamma(\kappa S))\phi_y(\gamma(\kappa S))\euler^{\gamma(\kappa S)(\delta r_2^2-t/2)-\Lambda(t-2\delta r_2^2)-\kappa\rho(B_x(r_1),B_y(r_1))+t\gamma(\kappa S)}
.
\end{multline*}
The minimizer of the function 
$
[0,\infty)\ni \kappa \mapsto -\kappa\rho(B_x(r_1),B_y(r_1))+t\gamma(\kappa S)
$
is 
\[
\kappa_0 =\frac{1}{S^2}\arsinh\left(\frac{\rho(B_x(r_1),B_y(r_1))S}{t}\right).
\]
We obtain
\[
\gamma(\kappa_0 S)=\sqrt{1+\left(\frac{\rho(B_x(r_1),B_y(r_1))S}{t}\right)^2}-1=\sigma(\rho(B_x(r_1),B_y(r_1))S/t)=\sigma.
\]
Hence
\begin{multline*}
\left(
\fint_{t/2-\delta r_1^2}^{t/2+\delta r_1^2} \| {B_x(r_1)}P_{2\tau} {B_y(r_1)}\|_{(2p)',2p}^p\drm \tau
\right)^\frac{1}{p}
\\
\leq
\phi_x(\sigma)\phi_y(\sigma)\euler^{\sigma(\delta r_2^2-t/2)-\Lambda(t-2\delta r_1^2)-tS^{-2}\zeta(\rho(B_x(r_1),B_y(r_1))S/t)}.
\end{multline*}
Finally, subsitute $\tau\to 2\tau$ in the integral on the left-hand side to conclude.
\end{proof}

\subsection{Faber-Krahn yields averaged heat kernel bounds}\label{sec:proofmain}

In the following, we choose particular constants in Theorem~\ref{thm:abstractbound} in order to obtain the first main result Theorem~\ref{thm:main_FK_hk} and Theorem~\ref{thm:FK_hk_global}, i.e., showing that Faber-Krahn on large scales yields averaged Gaussian upper heat kernel bounds.

We start with a lemma providing averaged Gaussian upper bounds for a fixed time provided a certain $\ell^2-\ell^p$-estimate holds.

\begin{lemma}\label{lem:abstractbound2}
Let $x,y\in X$, $p\in[1,\infty]$, $C,t>0$, $r_2\geq r_1\geq 0$. Assume that for $z\in\{x,y\}$, all $\delta\in(0,1)$ and all $\omega\in\ell^\infty(X)$ we have
\[
\left(\fint_{t/2-\delta r_1^2}^{t/2+\delta r_1^2}\| {B_z(r_1)}P_\tau^\omega\|_{2,2p}^{2p}\drm \tau\right)^\frac{1}{p}
\leq C(z)\delta^{-\frac{n}{2}}(1+\delta r_2^2 h(\omega))^{\frac{n}{2}+1}\fint_{t/2-\delta r_2^2}^{t/2+\delta r_2^2}\| P_\tau^\omega\|_{2,2}^2\drm \tau.
\]
Then we have 
\begin{multline*}
\left(
\fint_{t-\frac{r_1^2}{1\vee r_2^2\sigma}}^{t+\frac{r_1^2}{1\vee r_2^2\sigma}} \| {B_x(r_1)}P_{t} {B_y(r_1)}\|_{(2p)',2p}^p\drm t 
\right)^\frac{1}{p}
\\
\leq 
2^{n+5}\sqrt{C(x)C(y)}
(1\vee r_2^2\sigma)^{\frac{n}{2}}
 \euler^{-\Lambda(t-r_2^2)-tS^{-2}\zeta(\rho(B_x(r_1),B_y(r_1))S/t)}
\end{multline*}
where $\sigma=\sigma(\rho(B_x(r_1),B_y(r_1))S/t)$ is given in Theorem~\ref{thm:abstractbound}.
\end{lemma}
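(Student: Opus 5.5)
Apply Theorem~\ref{thm:abstractbound} with a single choice of $\delta$ adapted to the Davies optimizer:
\[
\delta:=\frac{1}{2(1\vee r_2^2\sigma)}\in(0,1),\qquad \sigma=\sigma(\rho(B_x(r_1),B_y(r_1))S/t).
\]
Then $2\delta r_1^2=r_1^2/(1\vee r_2^2\sigma)$, so the averaging window $[t-2\delta r_1^2,t+2\delta r_1^2]$ of Theorem~\ref{thm:abstractbound} is exactly the interval in the claim. The hypothesis of the lemma says that the assumption of Theorem~\ref{thm:abstractbound} holds with
\[
\phi_z(h)^2=C(z)\,\delta^{-n/2}(1+\delta r_2^2h)^{\frac n2+1},
\]
which is monotone increasing in $h$. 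The theorem therefore yields the bound
\[
\sqrt{C(x)C(y)}\;\delta^{-\frac n2}(1+\delta r_2^2\sigma)^{\frac n2+1}\,
\euler^{\sigma(\delta r_2^2-t/2)-\Lambda(t-2\delta r_1^2)-tS^{-2}\zeta(\rho(B_x(r_1),B_y(r_1))S/t)}.
\]
The factor $\phi_x(\sigma)\phi_y(\sigma)$ becomes $\sqrt{C(x)C(y)}\,\delta^{-n/2}(1+\delta r_2^2\sigma)^{n/2+1}$.

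Next bound each factor by elementary estimates. First, $\delta^{-n/2}=2^{n/2}(1\vee r_2^2\sigma)^{n/2}$. Second, $\delta r_2^2\sigma=r_2^2\sigma/(2(1\vee r_2^2\sigma))\le 1/2$, so $(1+\delta r_2^2\sigma)^{n/2+1}\le (3/2)^{n/2+1}$. Third, in the exponent, $\sigma\delta r_2^2\le 1/2$ and $-\sigma t/2\le 0$, so $\euler^{\sigma(\delta r_2^2-t/2)}\le \euler^{1/2}$. Fourth, since $\Lambda\ge 0$ and $2\delta r_1^2\le r_1^2\le r_2^2$, we get $-\Lambda(t-2\delta r_1^2)\le -\Lambda(t-r_2^2)$. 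Collecting terms, the constant is $2^{n/2}(3/2)^{n/2+1}\euler^{1/2}\le 2^{n+5}$, which leaves exactly the claimed expression.

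The remaining issue, which I expect to be the main obstacle, is bookkeeping rather than analysis. Theorem~\ref{thm:abstractbound} requires $t\ge 2r_1^2$ so that the time windows stay in $[0,\infty)$; in the present lemma this should be taken from the ambient setting. If instead the windows can leave $[0,\infty)$, one uses that $P_\tau$ is only defined for $\tau\ge 0$ and checks that $t/2-\delta r_1^2\ge 0$ because $\delta\le 1/2$ and $t\ge r_1^2$ in the intended application.

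One must also check that the exponent $\Lambda(t-2\delta r_2^2)$ appearing in the proof of Theorem~\ref{thm:abstractbound} is consistent with its stated conclusion $\Lambda(t-2\delta r_1^2)$. Either version suffices here, because $2\delta r_2^2\le r_2^2$ when $\delta\le1/2$, so both are dominated by $\euler^{-\Lambda(t-r_2^2)}$. Taking $\delta$ exactly as above, both possibilities are covered with the stated constant.
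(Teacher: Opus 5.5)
Your proposal is correct and follows the paper's proof. Both apply Theorem~\ref{thm:abstractbound} with $\phi_z(h)^2=C(z)\delta^{-n/2}(1+\delta r_2^2h)^{\frac n2+1}$ for a tuned $\delta$ and then bound the factors elementarily; since $\Lambda\ge 0$, either the $r_1$- or the $r_2$-version of the $\Lambda$-term is dominated by $\euler^{-\Lambda(t-r_2^2)}$. The only difference is cosmetic: the paper takes $\delta=\frac12\wedge\frac{1}{r_2^2\sigma}$ and compares the resulting larger averaging window with the claimed one at the cost of a factor $4^{1/p}\le 4$, whereas your $\delta=\frac{1}{2(1\vee r_2^2\sigma)}$ makes the two windows coincide exactly and gives a slightly better constant.
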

\begin{proof}Abbreviate $$\phi_z(h(\omega))^2=C(z,r_1,r_2)(1+h(\omega))^{\frac{n}{2}+1}\delta^{-\frac{n}{2}}.$$
Theorem~\ref{thm:abstractbound} implies
\begin{multline*}
\left(
\fint_{t-2\delta r_1^2}^{t+2\delta r_1^2} \| {B_x(r_1)}P_{t} {B_y(r_1)}\|_{(2p)',2p}^p\drm t 
\right)^\frac{1}{p}
\leq 
\phi_x(\sigma)\phi_y(\sigma)\euler^{-\Lambda(t-2\delta r^2)-tS^{-2}\zeta(\rho(B_x(r_1),B_y(r_1)S/t)}
\\
=
\sqrt{C(x)C(y)}
(1+\delta r_2^2\sigma)^{\frac{n}{2}+1}\delta^{-\frac{n}{2}}
 \euler^{\sigma(\delta r_2^2-t/2)-\Lambda(t-2\delta r_2^2)-tS^{-2}\zeta(\rho(B_x(r_1),B_y(r_1))S/t)},
\end{multline*}
\eat{where we recall
$
\sigma=\sigma(\rho(B_x(r_1),B_y(r_1))S/t),
$
$\sigma(x)=\sqrt{1+x^2}-1$.
}
Next, choose
\[
\delta=\frac{1}{2}\wedge \frac{1}{r_2^2\sigma}
\]
and use the bound $\sigma(\delta r_2^2-t/2)\leq (1/\sigma-t/2)\sigma\leq 1$ as well as $\delta\leq 1/2$ to obtain
\[
\sigma (\delta r_2^2-t/2)-\Lambda(t-2\delta r_2^2)
\leq 1-\Lambda(t-r_2^2).
\]
Finally, use the estimates $\delta r_2^2\sigma\leq 1$, $\euler\leq 2^2$,
\[
\frac{1}{4}\fint_{t-\frac{r_1^2}{1\vee r_2^2\sigma}}^{t+\frac{r_1^2}{1\vee r_2^2\sigma}}f\drm \tau\leq \fint_{t-2\delta r_1^2}^{t+2\delta r_1^2}f\drm \tau 
\]
for any non-negative measurable $f$, and $4^{1/p}\leq 2^2$ to conclude.
\end{proof}

We are now in the position to prove the averaged heat kernel bound which is our first main result.
\begin{theorem}\label{thm:main_FK_hk}
Let $ x,y\in X $, $t\in\RR$, 
$n>2$, $\delta\in(0,1]$, $r_2\geq 2r_1\geq 256S$.Assume that, for all $B(r)\subset  X$, $r\in [r_1,r_2]$, and all $U\subset B(r)$, we have
\begin{equation*}
\lambda(U)\geq \frac{C}{r^2}\left(\frac{m(B(r))}{m(U)}\right)^\frac{2}{n}.
\end{equation*} Then, for all $t\geq 2r_1^2$, we have 
\begin{multline*}
\mathcal{P}_{r_1}(t,x,y)
 \\
 \leq 
\frac{2^{5n^2}C_{n,C_S}}{ \sqrt[2q(\tau)]{m({B_{x}(r_1)})m({B_{y}(r_1)})}}
\frac{(1\vee \tau\sigma)^{\frac{n}{2}}}{\sqrt[\frac{2q(\tau)}{q( \tau)-1}]{m(B_x(\sqrt \tau))m(B_y(\sqrt \tau))}}
 \euler^{-\Lambda(t-\tau)-tS^{-2}\zeta(\rho(B_x(r_1),B_y(r_1))S/t)}
\end{multline*}
where $\sigma=\sigma(\rho(B_x(r_1),B_y(r_1))S/t)$, $\tau= t\wedge r_2^2$, and 
\[
q(\tau)=\frac{1}{21n}\sqrt[2n]{\frac{\tau}{S^2}}.
\]
\end{theorem}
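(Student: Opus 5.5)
The plan is to chain the results of Section~\ref{sec:FKVM} with Lemma~\ref{lem:abstractbound2}. First I convert the Faber-Krahn hypothesis into a Sobolev inequality on every ball $B_z(r)$ with $r\in[r_1,r_2]$ and $z\in\{x,y\}$. For $U\subset B_z(r)$ the hypothesis reads $\lambda(U)\geq c\,m(U)^{-2/n}$ with $c=C m(B_z(r))^{2/n}r^{-2}$. Proposition~\ref{prop:FKS} then gives
\[
\frac{m(B_z(r))^{2/n}}{\phi r^2}\|u\|_{\frac{2n}{n-2}}^2\leq \cE(u)\leq \cE(u)+\frac{1}{r^2}\|u\|_2^2,\qquad u\in\cC_c(B_z(r)),
\]
with $\phi=2^{\frac{2n+4}{2n-2}}C^{-1}\vee 1$. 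This is $S_\phi(n,r/2,r)$, so the volume doubling estimate of Proposition~\ref{prop:doubling} and Theorem~\ref{thm:subsolution} are both available on the whole range of radii.

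Next I fix $\tau=t\wedge r_2^2$ and work at the scale $R=\sqrt\tau$, which lies in $[r_1,r_2]$ because $t\geq 2r_1^2$. Theorem~\ref{thm:subsolution}, applied with $r=R$ and centre time $T=t/2$, gives for $z\in\{x,y\}$ and every $\omega$
\[
\Big(\fint_{t/2-\delta r_1^2}^{t/2+\delta r_1^2}\|B_z(r_1)P^\omega_s\|_{2,2q}^{2q}\drm s\Big)^{\frac1q}\leq \frac{m(B_z(r_1))^{\frac1q}}{m(B_z(R))}\Big(\frac{m(B_z(R))}{m(B_z(r_1))}\Big)^{\frac3q}C_{n,\phi}\,\delta^{-\frac n2}(1+\delta R^2h(\omega))^{\frac n2+1}\fint\|P^\omega_s\|_{2,2}^2\drm s ,
\]
where $q=q(\tau)$ and I have bounded $\|B_z(R)P^\omega_s\|_{2,2}\leq\|P^\omega_s\|_{2,2}$. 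This is exactly the hypothesis of Lemma~\ref{lem:abstractbound2} with $p=q(\tau)$, with $r_2$ replaced by $R$, and with
\[
C(z)=C_{n,\phi}\,m(B_z(r_1))^{\frac1q-\frac3q}\,m(B_z(R))^{\frac3q-1}.
\]

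Lemma~\ref{lem:abstractbound2} then yields the averaged bound on the window of half-width $r_1^2/(1\vee R^2\sigma)$, with prefactor $2^{n+5}\sqrt{C(x)C(y)}(1\vee\tau\sigma)^{n/2}$ and exponential factor $\euler^{-\Lambda(t-\tau)-tS^{-2}\zeta(\cdot)}$. Two points remain to be matched with $\mathcal{P}_{r_1}(t,x,y)$. The first is the time window. The definition of $\mathcal P$ uses $1\vee t\sigma_{xy}$, where $t\sigma_{xy}=\sqrt{\rho^2S^2+t^2}-t=t\sigma$. Since $\tau\sigma\leq t\sigma$, the window in $\mathcal P$ is contained in the window of the lemma and has length comparable up to a factor; I absorb this factor via $\fint_{\text{small}}f\le\frac{|\text{large}|}{|\text{small}|}\fint_{\text{large}}f$, or handle it by adjusting $\delta$ as in the proof of Lemma~\ref{lem:abstractbound2}. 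This is a constant bookkeeping step. The second point is the exponent: $\mathcal P$ uses $\|\cdot\|_{(2q)',2q}^{q}$ averaged and raised to the power $1/q$, which is the same form as the lemma.

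The main obstacle is turning $\sqrt{C(x)C(y)}$ into the stated shape
\[
m(B_x(r_1))^{-\frac1{2q}}\,m(B_x(\sqrt\tau))^{-\frac{q-1}{2q}}
\]
(and likewise for $y$). The Moser constant carries $m(B(R))^{-1}$, while the $1/q$-power normalisation from averaging over $B(r_1)$ contributes $m(B(r_1))^{1/q}$. The extra ratio $(m(B(R))/m(B(r_1)))^{3/q}$ has to be absorbed. I plan to first rewrite $m(B(r_1))^{1/q}m(B(R))^{-1}$ as $m(B(r_1))^{-1/q}m(B(R))^{-(q-1)/q}$ times $(m(B(r_1))/m(B(R)))^{2/q}\le 1$. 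This leaves the ratio $(m(B(R))/m(B(r_1)))^{1/q}$ to be controlled. For this I will use Proposition~\ref{prop:doubling}, which gives $m(B(R))/m(B(r_1))\lesssim A(n)(R/r_1)^n\Phi$. Because $1/q\sim 21n(S/R)^{1/n}$ decays, the quantity $(R/r_1)^{n/q}$ is bounded by $h$-type maximisation arguments, as in the proof of Theorem~\ref{thm:subsolution}, giving at most $2^{O(n^2)}$. Collecting all the powers of two and $\phi$ then produces the constant $2^{5n^2}C_{n,C_S}$. If the exponents do not close exactly, I will fall back on keeping a $\Phi$-type factor, which the global Theorem~\ref{thm:FK_hk_global} absorbs.
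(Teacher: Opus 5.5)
Your route is the paper's: Proposition~\ref{prop:FKS} and Proposition~\ref{prop:doubling}, then Theorem~\ref{thm:subsolution} at scale $\sqrt\tau$ around $t/2$, then Lemma~\ref{lem:abstractbound2}. Your constant $C(z)=C_{n,\phi}\,m(B_z(r_1))^{-2/q}\,m(B_z(\sqrt\tau))^{3/q-1}$ is correct. Here you are more careful than the paper's proof, which inserts $C(z,\sqrt\tau)$, the constant for the average \emph{normalised} by $1/m(B_z(r_1))$, directly into Lemma~\ref{lem:abstractbound2}.

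The gap is in the step you call the main obstacle, and your rewrite there is an algebra error. In fact $m(B(r_1))^{-1/q}m(B(R))^{-(q-1)/q}\big(m(B(r_1))/m(B(R))\big)^{2/q}=m(B(r_1))^{1/q}m(B(R))^{-1-1/q}$, so you silently lose a factor $m(B(R))^{1/q}$. Done correctly, $C(z)=C_{n,\phi}\big(m(B_z(\sqrt\tau))/m(B_z(r_1))\big)^{2/q}\,m(B_z(\sqrt\tau))^{-(q-1)/q}$. Your chain then bounds the average over the window of half-width $r_1^2/(1\vee\tau\sigma)$ by
\[
2^{n+5}C_{n,\phi}\prod_{z\in\{x,y\}}\Big(\frac{m(B_z(\sqrt\tau))}{m(B_z(r_1))}\Big)^{\frac1q}\,\frac{(1\vee\tau\sigma)^{\frac n2}}{\big(m(B_x(\sqrt\tau))\,m(B_y(\sqrt\tau))\big)^{\frac{q-1}{2q}}}
\]
times the exponential factor of the statement, with $q=q(\tau)$. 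The volume ratios are what Proposition~\ref{prop:doubling} can handle.

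Compared with the statement, this bound lacks the factor $\big(m(B_x(r_1))m(B_y(r_1))\big)^{-1/(2q)}$. That factor is not a volume ratio, so neither doubling nor your $\Phi$-type fallback can supply it. In fact no argument can. Under $(b,m)\mapsto(cb,cm)$ the operator $\Delta$, the metric $\rho$, $S$, $\Lambda$, $q$ and the Faber--Krahn hypothesis are unchanged. Yet $\mathcal{P}_{r_1}(t,x,y)$ scales like $c^{-(q-1)/q}$, while the stated right-hand side scales like $c^{-1}$. Letting $c\to\infty$ would force $\mathcal{P}_{r_1}(t,x,y)=0$, which is impossible on a connected graph.

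The stated form follows from the displayed bound only when $m(B_x(r_1))m(B_y(r_1))\le 1$. The paper obtains it precisely by dropping the factor $m(B_z(r_1))^{1/q}$ that your $C(z)$ correctly carries. What this route actually proves is the scale-invariant bound above.

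Two further steps you treat as routine are not, though the paper passes over them as well.

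\emph{The window mismatch is not a constant.} The windows in $\mathcal{P}_{r_1}$ and in Lemma~\ref{lem:abstractbound2} have length ratio $(1\vee t\sigma)/(1\vee\tau\sigma)$. This equals $t/r_2^2$ as soon as $t>r_2^2$ and $\tau\sigma\geq 1$, so comparing the averages costs $(t/r_2^2)^{1/q}$, which is unbounded in $t$. Re-choosing $\delta=\frac12\wedge\frac{1}{t\sigma}$ in the proof of Lemma~\ref{lem:abstractbound2} gives the right window, but replaces $(1\vee\tau\sigma)^{n/2}$ by $(1\vee t\sigma)^{n/2}$. The two windows coincide only for $t\le r_2^2$.

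\emph{The radius range is too short.} Theorem~\ref{thm:subsolution} at $r=\sqrt\tau$ needs the Sobolev inequality for all radii in $[\sqrt\tau/2,\sqrt\tau]$. The Faber--Krahn hypothesis gives this only when $\sqrt\tau\geq 2r_1$, i.e.\ $t\geq 4r_1^2$, not merely $t\geq 2r_1^2$.
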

\begin{proof}By Proposition~\ref{prop:FKS} we have the Sobolev inequality in the same range of parameters with a slightly different constant and the doubling property by Proposition~\ref{prop:doubling}.
Apply Theorem~\ref{thm:subsolution} with $r_1=r_1$ and $r_2'=\sqrt \tau$ to obtain, for $z\in\{x,y\}$ and all $t\geq 2r_1^2$, $\delta\in(0,1)$ and $\omega\in \ell^\infty(X)$, 
 \begin{multline*}
\left(\frac{1}{m({B_{z}(r_1)})}\fint\limits_{ t/2-\delta r_1^2}^{t/2+\delta r_1^2}\|{B_{z}(r_1)} P_s^\omega\|_{2,2q( \tau)}^{2q(\tau)}\drm s\right)^{\frac{1}{q( \tau)}}
\\
\leq C(z,\sqrt \tau)
 (1+\delta \tau h(\omega))^{\frac{n}{2}+1} \delta^{-\frac{n}{2}} 
\fint\limits_{t- \delta \tau}^{t+\delta \tau}\|{B_{z}(\sqrt \tau)} P_t^\omega\|_{2,2}^{2}\drm t,
\end{multline*}
where
\[
C(z,\sqrt \tau)= C_{n,C_S}\left(\frac{m({B_{z}(\sqrt \tau)})}{m({B_{z}(r_1)})}\right)^{\frac{1}{q(\tau)}}\frac{1}{m({B_{z}(\sqrt \tau)})}
\]
Hence, Lemma~\ref{lem:abstractbound2} implies 
\begin{multline*}
\mathcal{P}_{r_1}(t,x,y)=\left(
\fint_{t-\frac{r_1^2}{1\vee \tau\sigma}}^{t+\frac{r_1^2}{1\vee \tau\sigma}} \| {B_x(r_1)}P_{\tau} {B_y(r_1)}\|_{(2q(\tau))',2q(\tau)}^{q(\tau)}\drm \tau
\right)^\frac{1}{q(\tau)}
\\
\leq 
2^{n+5}\sqrt{C(x,\sqrt \tau)C(y,\sqrt \tau)}
(1\vee \tau\sigma)^{\frac{n}{2}}
 \euler^{-\Lambda(t-\tau)-t\zeta(\rho(B_x(r_1),B_y(r_1))S/t)}
\\
 \leq 
2^{n+5}C_{n,C_S}\left(\frac{m({B_{x}(\sqrt \tau)})}{m({B_{x}(r_1)})}\right)^{\frac{1}{2q(\tau)}}
\left(\frac{m({B_{y}(\sqrt \tau)})}{m({B_{y}(r_1)})}\right)^{\frac{1}{2q(\tau)}}
\\
\cdot
\frac{(1\vee \tau\sigma)^{\frac{n}{2}}}{\sqrt{m(B_x(\sqrt \tau))m(B_y(\sqrt \tau))}}
 \euler^{-\Lambda(t-\tau)-tS^{-2}\zeta(\rho(B_x(r_1),B_y(r_1))S/t)}
\end{multline*}
The estimate $\sqrt\tau=\sqrt t\wedge r_2\leq \sqrt t$ in the factor involving $\sigma$ leads to  the claim. 
\end{proof}

\eat{
\begin{lemma}
Let $ x\in X $, $T\in\RR$, 
$n>2$, $\delta\in(0,1]$, $r_2\geq 2r_1\geq 256 S$ and constants $\phi,\Phi\geq 1$.
Assume $S_{\phi}(n,r_1,r_2)$ in $ x$.
 For all non-negative $\Delta_\omega$-subsolutions $v\geq 0$ on $[T-r_2^2,T+r_2^2]\times B_{x}(r_2)$, we have
 \begin{multline*}
\left(\frac{1}{2  \delta r_1^2m({B_{x}(r_1)})}\int\limits_{ T-\delta r_1^2}^{T+\delta r_1^2}\sum_{B_{x}(r_1)} mv_t^{2w(r_2)}\drm t\right)^{\frac{1}{w(r_2)}}
\\
\leq C\left(\frac{m({B_{x}(r_2)})}{m({B_{x}(r_1)})}\right)^{\frac{1}{w(r_2)}}
 \frac{(1+\delta r_2^2h(\omega))^{\frac{n}{2}+1}  }{\delta^{\frac{n}{2}+1} r_2^{2}m({B_{x}(r_2)})}
\int\limits_{T- \delta r_2^2}^{T+\delta r_2^2}\sum_{B_{x}(r_2)} mv_t^{2}\drm t
\end{multline*}
where  $2^{3n^2}C_{n,C_S}$ and $
w(r)={\left(\frac{r}{2^6S}\right)^{\frac{1}{n}}}$.
\end{lemma}
\begin{proof}
Apply Proposition~\ref{prop:moser} with $r_1'=r_2/2$ and $r_2'=r_2$ to obtain 
 \begin{multline*}
 \left(\frac{1}{2  \delta r_1^2m({B_{x}(r_1)})}\int\limits_{ T-\delta r_1^2}^{T+\delta r_1^2}\sum_{B_{x}(r_1)} mv_t^{2w(r_2)}\drm t\right)^{\frac{1}{w(r_2)}}
 \\
 \leq \left(\frac{(r_2/2)^2m({B_{x}(r_2/2)})}{ r_1^2m({B_{x}(r_1)})}\right)^{\frac{1}{w(r_2)}}
\left(\frac{1}{2  \delta (r_2/2)^2m({B_{x}(r_2/2)})}\int\limits_{ T-\delta (r_2/2)^2}^{T+\delta (r_2/2)^2}\sum_{B_{x}(r_2/2)} mv_t^{2w(r_2)}\drm t\right)^{\frac{1}{w(r_2)}}
\\
\leq 2^{2n^2}C_{n,C_S}\left(\frac{r_2^2m({B_{x}(r_2)})}{ r_1^2m({B_{x}(r_1)})}\right)^{\frac{1}{w(r_2)}}
 \frac{(1+\delta r_2^2h(\omega))^{\frac{n}{2}+1}  }{\delta^{\frac{n}{2}+1} r_2^{2}m({B_{x}(r_2)})}
\int\limits_{T- \delta r_2^2}^{T+\delta r_2^2}\sum_{B_{x}(r_2)} mv_t^{2}\drm t.
\end{multline*}
Recall $w(r_2)={\left(\frac{r_2}{2^6S}\right)^{\frac{1}{n}}}\geq \left(\frac{2^8S}{2^6S}\right)^{\frac{1}{n}}\geq 1$ such that 
\[
\left(\frac{r_2^2}{ r_1^2}\right)^{\frac{1}{w(r_2)}}=\left(\frac{r_2}{ r_1}\right)^{\frac{w(r_1)}{w(r_2)}\frac{2}{w(r_1)}}=h(r_2/r_1)^{\frac{1}{w(r_1)}}\leq 2^{\frac{n^2}{w(r_1)}}\leq 2^{n^2},
\]
where we used $h(x)=x^{2/\sqrt[n]{x}}\leq 2^{n^2}$. This yields the claim. 
\end{proof}

}

\begin{proof}[Proof of
Theorem~\ref{thm:FK_hk_global}]
This follows from Theorem~\ref{thm:main_FK_hk} and Proposition~\ref{prop:doubling} by setting $r_2=\infty$. 
\end{proof}
\eat{\begin{corollary}
Let  
$n>2$,  $r_0\geq 128S$. Assume $S(n,r_0,\infty)$ in every vertex of $X$. Then there is a constant $C>0$ such that, for all $r_1\geq r_0$ and $t\geq 2r_1^2$, $x,y\in X$, we have 
\begin{multline*}
\mathcal{P}_{r_1}(t,x,y)
 \leq 
\frac{C}{\sqrt[2q(t)]{m({B_{x}(r_1)})m({B_{y}(r_1)})}}
\\
\cdot
\frac{(1\vee S^{-2}(\sqrt{\rho(B_x(r_1),B_y(r_1))^2S^2+t^2}-t))^{\frac{n}{2}}}{\sqrt[\frac{2q(t)}{q(t)-1}]{m(B_x(\sqrt t))m(B_y(\sqrt t))}}
 \euler^{-tS^{-2}\zeta(\rho(B_x(r_1),B_y(r_1))S/t)}.
\end{multline*}
\end{corollary}
}

\eat{
\begin{corollary}
Let  
$n>2$,  $r_0\geq 128S$. Assume $S(n,r_0,\infty)$ in every vertex of $X$. Then there is a constant $C>0$ such that, for all $r_1\geq r_0$ and $t\geq 2r_1^2$, $x\in X$, we have 
\[
\mathcal{P}_{r_1}(t,x,x)
 \leq 
\frac{C}{\sqrt[q(t)]{m({B_{x}(r_1)})})}
\frac{1}{\sqrt[\frac{q(t)}{q(t)-1}]{m(B_x(\sqrt t))}}.
\]
\end{corollary}
}

\section{Averaged heat kernel bounds imply Faber-Krahn}\label{sec:AVtoFK}

In the present section we derive relative Faber-Krahn inequalities for sets of sufficient measure from averaged Gaussian upper heat kernel bounds. Section~\ref{sec:ev_av} provides a new general lower bound for Dirichlet eigenvalues of subsets in terms of averaged semigroup norms and the averaged heat kernel. In Section~\ref{sec:AVtoFK} we derive the corresponding main result.

\subsection{Eigenvalue bounds in terms of averaged  semigroup norms}\label{sec:ev_av}

Our first observation is a lower bound on the first Dirichlet eigenvalue of a subset of a graph in terms of averaged heat semigroup norms.

\begin{theorem}\label{lem:ev_bound_abstract1}For all $U\subset X$,  $r\geq 0$, and $q\colon [0,\infty)\to [1,\infty]$, we have 
\[
\lambda(U)
\geq \sup_{t\ge r^2}\ 
\frac{1}{2t}
\ln\left(m(U)^{1-\frac{1}{q(t)}}\left(\fint_{t-r^2}^{t+r^2}\|UP_{\tau/2}U\|_{\frac{2q(t)}{2q(t)-1},2q(t)}^{q(t)}\drm \tau \right)^{\frac{1}{q(t)}}\right)^{-1},
\]
\end{theorem}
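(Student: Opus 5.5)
The plan is to compare the Dirichlet heat semigroup on $U$ with the sandwiched semigroup $UP_sU$ through a nonnegative ground state, and then convert the resulting $\ell^2$-information into the $\ell^{(2q)'}$-$\ell^{2q}$ operator norm by H\"older's inequality. Fix $t\ge r^2$ and write $q=q(t)$. It suffices to show
\[
m(U)^{1-\frac1q}\Big(\fint_{t-r^2}^{t+r^2}\|UP_{\tau/2}U\|_{\frac{2q}{2q-1},2q}^{q}\drm\tau\Big)^{\frac1q}\ \ge\ \euler^{-\lambda(U)t}.
\]
Taking logarithms, this gives $\ln(\cdots)^{-1}\le\lambda(U)t\le 2t\lambda(U)$, which is the claim. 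If the left-hand side is infinite, or $\lambda(U)=\infty$, there is nothing to prove.

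\emph{Step 1 (finite $U$).} Assume first that $U$ is finite. Let $\Delta_U$ be the Dirichlet restriction of $\Delta$ to $\cC(U)$ and $P^U_s=\euler^{-s\Delta_U}$. By Perron--Frobenius there is a ground state $0\le\varphi\in\cC(U)$, $\varphi\neq0$, with $P^U_s\varphi=\euler^{-\lambda(U)s}\varphi$. Domination of the Dirichlet semigroup by the full one (both positivity preserving, e.g.\ via the Trotter/Feynman--Kac formula or the minimality of $P^U$) gives $P^U_s\varphi\le UP_sU\varphi$ pointwise. Hence
\[
\euler^{-\lambda(U)s}\|\varphi\|_2^2=\langle\varphi,P^U_s\varphi\rangle\le\langle\varphi,UP_sU\varphi\rangle\le\|\varphi\|_{(2q)'}^2\,\|UP_sU\|_{(2q)',2q},
\]
where the last step uses H\"older's inequality between $\ell^{2q}$ and $\ell^{(2q)'}$. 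Since $\varphi$ is supported in $U$, H\"older also gives $\|\varphi\|_{(2q)'}\le m(U)^{\frac12-\frac1{2q}}\|\varphi\|_2$. Combining the two estimates yields
\[
\euler^{-\lambda(U)s}\le m(U)^{1-\frac1q}\|UP_sU\|_{(2q)',2q}.
\]

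\emph{Step 2 (averaging in time).} Put $s=\tau/2$ with $\tau\in[t-r^2,t+r^2]$. Then $s\le (t+r^2)/2\le t$, so $\euler^{-\lambda(U)s}\ge\euler^{-\lambda(U)t}$. Raising the Step~1 inequality to the power $q$, averaging over $\tau$, and taking the $q$-th root gives the displayed inequality. The case $q=\infty$ is handled by replacing the $q$-mean with the essential supremum; the same pointwise bound then applies directly.

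\emph{Step 3 (general $U$).} Exhaust $U$ by finite sets $U_k\uparrow U$. Then $\lambda(U_k)\downarrow\lambda(U)$, since $\cC_c(U)=\bigcup_k\cC(U_k)$. Moreover $m(U_k)\le m(U)$, and $\|U_kP_sU_k\|\le\|UP_sU\|$ because $P_s$ is positivity preserving and $U_k\le U$. Apply Step~2 to each $U_k$ and let $k\to\infty$.

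I expect the main obstacle to be the domination $P^U_s\le UP_sU$ together with the existence of a ground state. For finite $U$, the domination follows because $\euler^{-s\Delta_U}$ is the semigroup of the restriction, which is a compression of $\Delta$ plus a nonnegative killing term. Steps~1 and~2 are routine; the exhaustion in Step~3 is where the monotonicity in $U$ must be checked carefully.
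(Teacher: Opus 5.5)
Your proof is correct and follows the paper's argument: a nonnegative Dirichlet ground state on $U$, the domination $P^U_s\le UP_sU$, H\"older's inequality on $U$ to produce the factor $m(U)^{1-1/q}$, and $\tau/2\le t$ on the averaging window. The differences are minor. You pair against $\varphi$ through $\ell^2$ rather than testing the operator norm on the eigenfunction directly, and this gives exactly the exponent $1-\frac{1}{q(t)}$ of the statement, whereas the paper's proof writes $1-\frac{1}{2q}$. Your exhaustion step also covers infinite $U$, where the paper simply assumes a ground state exists.
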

\begin{remark}If $q=\infty$, then the above estimate reduces to 
\[
\lambda (U)\geq \sup_{t\geq r^2}\ \frac{1}{2t}\ln\left(m(U)\sup_{\tau\in[(t-r^2)/2,(t+r^2)/2]}\|UP_\tau U\|_{1,\infty}\right)^{-1}.
\]
To see this, note that $(\fint_I |f|^p)^{1/p}\to \|f\restriction_I\|_\infty$ if $p\to \infty$ for a compact interval~$I$, $\|A\|_{p',p}\to \|A\|_{1,\infty}$ if $p\to\infty $ since $p'=p/(p-1)\to 1$ as $p\to\infty$, and that the heat semigroup is substochastic.
\end{remark}
\begin{proof}
Let $\phi$ denote the positive eigenfunction associated to $\lambda(U)$ normalized such that $\|\phi\|_{\frac{q}{q-1}}=1$. Since $q\geq 1$, H\"older's inequality implies $1\leq m(U)^{1-\frac{1}{2q}}\|\phi\|_{2q}$ and hence
\[
\|UP_{t/2}U\|_{\frac{2q}{2q-1},2q}=\sup_{\|\varphi\|_{\frac{2q}{2q-1}}=1}\|P_t\varphi\|_{\ell^{2q}(U)}\geq \|P^U_t\phi\|_{2q}=\euler^{-\lambda(U)t}\|\phi\|_{2q}\geq \euler^{-\lambda(U)t}m(U)^{\frac{1}{2q}-1}.
\]
Hence, for all $U\subset X$ and $t\geq r^2\geq 0$, we have 
\begin{multline*}
 \euler^{-2t\lambda(U)}\leq \euler^{-(t+r^2)\lambda(U)}\leq \left(\fint_{t-r^2}^{t+r^2}\euler^{-\tau\lambda(U)q(t)}\drm \tau\right)^{\frac{1}{q(t)}}
\\
\leq m(U)^{1-\frac{1}{2q(t)}}\left(\fint_{t-r^2}^{t+r^2}\|UP_{\tau/2}U\|_{\frac{2q(t)}{2q(t)-1},2q(t)}^{q(t)}\drm \tau \right)^{\frac{1}{q(t)}}
\end{multline*}
Rearranging yields the claim.
\end{proof}

In the case that the subsets we want to deal with are contained in a geodesic ball, we can compare the Dirichlet eigenvalue to the averaged heat kernel in the ball. 

\begin{theorem}\label{thm:ev_bound_abstract2}Let $o\in X$, $r_2\geq r_1\geq 0$, and a function $q\colon [0,\infty)\to [1,\infty]$ be given. Then, 
for all $U\subset B(r_2)\subset X$, we have 
\[
\lambda(U)
\geq \sup_{t\ge r_1^2}\ 
\frac{1}{2t}
\ln\left(m(U)^{1-\frac{1}{2q(t)}}\left(\frac{r_2}{r_1}\right)^{\frac{2}{q(t)}}\mathcal{P}_{r_2}(t,o,o)\right)^{-1}.
\]
\end{theorem}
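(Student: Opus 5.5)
The plan is to reduce to Theorem~\ref{lem:ev_bound_abstract1} applied with $r=r_1$, and then to compare the averaged norm of $UP_{\tau/2}U$ on the window $[t-r_1^2,t+r_1^2]$ with the quantity $\mathcal{P}_{r_2}(t,o,o)$. The comparison rests on monotonicity in the sandwiching sets and on a comparison of time windows. Since the map $s\mapsto \frac{1}{2t}\ln(s^{-1})$ is decreasing, it suffices to show, for every $t\ge r_1^2$, that
\[
m(U)^{1-\frac{1}{q(t)}}\Big(\fint_{t-r_1^2}^{t+r_1^2}\|UP_{\tau/2}U\|^{q(t)}_{\frac{2q(t)}{2q(t)-1},2q(t)}\drm\tau\Big)^{\frac1{q(t)}}\le m(U)^{1-\frac{1}{2q(t)}}\Big(\frac{r_2}{r_1}\Big)^{\frac{2}{q(t)}}\mathcal{P}_{r_2}(t,o,o).
\]
The powers of $m(U)$ differ. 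The proof of Theorem~\ref{lem:ev_bound_abstract1} in fact produces $m(U)^{1-\frac{1}{2q}}$, coming from the H\"older step, so I will redo that step directly with this exponent rather than quote the displayed statement.

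\emph{Set monotonicity.} Since $U\subset B_o(r_2)$ and $P_s$ is positivity preserving, the bound $0\le UP_sU\le B_o(r_2)P_sB_o(r_2)$ holds pointwise as kernels. It gives $\|UP_sU\|_{p',p}\le \|B_o(r_2)P_sB_o(r_2)\|_{p',p}$, because the supremum can be restricted to nonnegative test functions.

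\emph{Time windows.} For $x=y=o$ the distance $\rho(B_o(r_2),B_o(r_2))$ vanishes, so $\sigma_{oo}=0$ and $I(t,r_2,o,o)=[t-r_2^2,t+r_2^2]$. The integrand in Theorem~\ref{lem:ev_bound_abstract1} is evaluated at $\tau/2$, whereas $\mathcal{P}$ uses $P_\tau$. The substitution $\tau\mapsto 2\tau$ (or a slightly modified choice of the eigenfunction time in the proof of Theorem~\ref{lem:ev_bound_abstract1}, taking $\|U P_t U\|\ge e^{-\lambda t}$ directly) aligns the two. After that, the window $[t-r_1^2,t+r_1^2]$ is contained in $[t-r_2^2,t+r_2^2]$ when $r_1\le r_2$. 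Enlarging the window for a nonnegative integrand gives
\[
\fint_{t-r_1^2}^{t+r_1^2} f\le \frac{r_2^2}{r_1^2}\fint_{t-r_2^2}^{t+r_2^2} f .
\]
Taking the $1/q(t)$-th power yields the factor $(r_2/r_1)^{2/q(t)}$.

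\emph{Main obstacle.} The main obstacle is the bookkeeping of the factor $2$ in time between $P_{\tau/2}$ and $P_\tau$ and the exact power of $m(U)$. I expect to resolve both by rerunning the argument of Theorem~\ref{lem:ev_bound_abstract1} with the eigenfunction normalized in $\ell^{\frac{2q}{2q-1}}$. H\"older then gives $\|\phi\|_{2q}\ge m(U)^{\frac1{2q}-1}$, and the pointwise lower bound $\|UP_\tau U\|\ge e^{-\lambda(U)\tau}m(U)^{\frac1{2q}-1}$ holds for all $\tau\in[t-r_1^2,t+r_1^2]$. Here $e^{-\lambda\tau}\ge e^{-2t\lambda}$ because $\tau\le 2t$. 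Averaging the $q$-th powers over the window, then enlarging the window and the sets as above, and finally taking logarithms gives the claim.
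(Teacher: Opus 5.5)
The gap is in your H\"older step, and the displayed inequality itself is false when $m(U)<1$. The rest of your plan follows the paper's proof: enlarge the window from $[t-r_1^2,t+r_1^2]$ to $[t-r_2^2,t+r_2^2]$ at the cost of $(r_2/r_1)^{2/q(t)}$, replace $U$ by $B_o(r_2)$ using positivity, and rerun the eigenfunction argument of Theorem~\ref{lem:ev_bound_abstract1}.

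\textbf{What is correct.} Your treatment of time is sound and more careful than the paper's one-line reduction, which glosses over the mismatch between $P_{\tau/2}$ and $P_\tau$. Bounding $\|UP_\tau U\|_{\frac{2q}{2q-1},2q}\ge \euler^{-\lambda(U)\tau}\|\phi\|_{2q}$ directly and using $\tau\le t+r_1^2\le 2t$ produces exactly the factor $\frac{1}{2t}$, with the semigroup at time $\tau$ as in $\mathcal{P}_{r_2}(t,o,o)$. The other option you mention, substituting $\tau\mapsto 2\tau$, would recenter the window at $t/2$ and does not align anything.

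\textbf{The H\"older step.} This is exactly where you chose to follow the proof of Theorem~\ref{lem:ev_bound_abstract1} rather than its displayed statement. With $\|\phi\|_{\frac{2q}{2q-1}}=1$ and $\supp\phi\subset U$,
\[
1=\|\phi\|_{\frac{2q}{2q-1}}\le m(U)^{\frac{2q-1}{2q}-\frac{1}{2q}}\,\|\phi\|_{2q}=m(U)^{1-\frac1q}\,\|\phi\|_{2q}.
\]
So $\|\phi\|_{2q}\ge m(U)^{\frac1q-1}$, not $m(U)^{\frac1{2q}-1}$. The latter would need an $\ell^1$-normalization, which does not fit the $\ell^{\frac{2q}{2q-1}}\to\ell^{2q}$ norm. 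For $m(U)\ge 1$ your bound follows a fortiori. For $m(U)<1$ and $q<\infty$ it is false: for $U=\{x\}$ one gets exactly $\|\phi\|_{2q}=m(x)^{\frac1q-1}<m(x)^{\frac1{2q}-1}$.

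\textbf{Counterexample to the statement.} Take $\ZZ$ with $b(x,y)=1$ iff $|x-y|=1$, $m\equiv\mu\le\euler^{-4}$, and the intrinsic path metric $\rho(x,y)=|x-y|\sqrt{\mu/2}$. Choose $q\equiv 1$, $r_1=r_2=r<\sqrt{\mu/2}$, $U=B_o(r)=\{o\}$ and $t=r^2$.
\begin{enumerate}
\item[(i)] The eigenvalue is $\lambda(U)=2/\mu<1/t$.
\item[(ii)] The averaged quantity satisfies $\mathcal{P}_r(t,o,o)=\fint_0^{2t}(P_\tau\Eins_{\{o\}})(o)\,\drm\tau\le 1$.
\item[(iii)] Hence, with exponent $1-\frac{1}{2q}=\frac12$, the right-hand side at this $t$ is at least $\frac{1}{2t}\cdot\frac12\ln\frac1\mu\ge\frac1t>\lambda(U)$.
\end{enumerate}

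\textbf{What your argument proves.} It correctly yields the bound with $m(U)^{1-\frac{1}{q(t)}}$ in place of $m(U)^{1-\frac{1}{2q(t)}}$. That implies the stated version only when $m(U)\ge 1$. The paper's proof has the same defect. Quoting the displayed statement of Theorem~\ref{lem:ev_bound_abstract1} gives the exponent $1-\frac1{q(t)}$; the exponent $1-\frac{1}{2q(t)}$ is inherited from the faulty H\"older line in its proof. You should prove the $1-\frac{1}{q(t)}$ version and state the restriction $m(U)\ge1$ explicitly if you want the displayed form.
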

\begin{proof}
For $r_2\geq r_1$, we have 
\[
\left(\fint_{t-r_1^2}^{t+r_1^2}\|UP_{\tau/2}U\|_{\frac{2q(t)}{2q(t)-1},2q(t)}^{q(t)}\drm \tau \right)^{\frac{1}{q(t)}}
\leq 
\left(\frac{r_2^2}{r_1^2}\right)^{\frac{1}{q(t)}}\left(\fint_{t-r_2^2}^{t+r_2^2}\|UP_{\tau/2}U\|_{\frac{2q(t)}{2q(t)-1},2q(t)}^{q(t)}\drm \tau \right)^{\frac{1}{q(t)}}
\]
Since $P_t$ is positivity preserving, for all $U\subset B(r_2)$, we have 
\[
\|UP_tU\|_{2q',2q}\leq \|B(r_2)P_tB(r_2)\|_{2q',2q}.
\]
Applying this estimate to the right-hand side above, the claim follows from Lemma~\ref{lem:ev_bound_abstract1}, and the definition of $\mathcal{P}_{r_2}(t,o,o)$ .
\end{proof}

\subsection{From averaged semigroup norms to Faber-Krahn}\label{sec:av_fk}

In this section we obtain relative Faber-Krahn inequalities for subsets in balls whose measure is a given portion of the measure of the ball and a variable dimension. To this end, we modify the classical approach from \cite{Grigoryan-09} and combine the general bound for Dirichlet eigenvalues in Theorem~\ref{thm:ev_bound_abstract2} with the Gaussian bound on the averaged heat kernel. The restriction of the subsets and the variable dimension result from the fact that we only assume the averaged heat kernel bounds to hold on large scales. 
\\

In order to conclude the result as in the classical case, we need the reverse volume doubling property.
Recall the following result which is the adaption to graphs with unbounded geometry from the manifold setting \cite{Grigoryan-94}. 
\begin{proposition}[reverse doubling,{\cite[Lemma~7.4]{Rose24}}]\label{prop:reversedoubling}
$32S\leq 8\hat r\leq r\leq \diam X/2$, $o\in X$, constants $n>0$, $C_D\geq 1$, and assume 
$V(\hat r,r,C_D,n) $ in $B_o(r)$.
Then, we have 
\[
m(B_o(r_2))\geq \frac12\left(\frac{r_2}{r_1}\right)^{\eta} m(B_o(r_1)),\quad 4\hat r\leq r_1\leq r_2\leq r/2,
\]
where
$
\eta=(2C_D)^{-21n}.
$
\end{proposition}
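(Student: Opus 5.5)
The plan follows the classical argument of Grigor'yan for reverse doubling on manifolds (as adapted in \cite[Lemma~7.4]{Rose24}). The strategy is to iterate a single-step estimate: for $r_1$ in the admissible range, find $\theta>1$, depending only on $C_D$ and $n$, such that
\[
m(B_o(\theta r_1))\geq 2\, m(B_o(r_1)).
\]
Iterating over dyadic-type scales $r_1,\theta r_1,\theta^2 r_1,\ldots$ then gives a power law $(r_2/r_1)^{\eta}$ with $\eta=\ln 2/\ln\theta$. The factor $\tfrac12$ in the statement absorbs the incomplete last step when $r_2/r_1$ is not an integer power of $\theta$.

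The key step uses the geodesic structure together with $r\leq \diam X/2$. Since $2r_1\leq r\le \diam X/2$, there is a vertex $z$ with $\rho(o,z)$ roughly equal to $2r_1$, and this distance can be pinned down up to an error $S$. Indeed, $X$ is geodesic with jumps of size at most $S$, so along a geodesic from $o$ to a far point every distance is attained up to $S$. The lower bound $r_1\geq 4\hat r\geq 16S$ makes this $S$-error harmless.

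Next, compare the balls. The ball $B_z(r_1/2)$ is disjoint from $B_o(r_1)$ and contained in $B_o(3r_1)$. Applying the doubling hypothesis $V(\hat r,r,C_D,n)$ at $z$ gives
\[
m(B_z(r_1/2))\geq C_D^{-1}\, 8^{-n}\, m(B_z(4r_1))\geq C_D^{-1}\, 8^{-n}\, m(B_o(r_1)),
\]
using $B_o(r_1)\subset B_z(4r_1)$. The radii used here must lie in $[\hat r, r]$, which forces the restrictions $r_1\geq 4\hat r$ and $r_2\le r/2$. Combining these,
\[
m(B_o(3r_1))\geq \bigl(1+C_D^{-1}8^{-n}\bigr)\, m(B_o(r_1)).
\]

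Finally, iterate this with the factor $3$, which yields $\eta=\ln(1+C_D^{-1}8^{-n})/\ln 3$. The remaining task is to check that this exponent dominates $(2C_D)^{-21n}$, which holds since $\ln(1+x)\geq x/2$ for $x\le1$. The main obstacle is bookkeeping: the doubling hypothesis is assumed only in $B_o(r)$ and only for radii in $[\hat r,r]$, so one must verify that every center $z$ and every radius used stays within these ranges at each iteration, and that the number of iterations is $\lfloor \log_3(r_2/r_1)\rfloor$. Since the statement is cited from \cite[Lemma~7.4]{Rose24}, the proof could simply refer there, noting that no degree terms enter the argument.
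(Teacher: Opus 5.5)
Your geometric argument is correct and is the standard Grigor'yan-type proof behind the cited \cite[Lemma~7.4]{Rose24}; the paper gives no proof beyond that citation. The bookkeeping you flag does work out. In each step you use $s=3^jr_1\le r_2/3\le r/6$. The vertex $z$ can be taken on a geodesic from $o$ towards a point at distance $\ge r-S$, which exists since $\sup_w\rho(o,w)\ge\diam X/2\ge r$. It then satisfies $2s-S<\rho(o,z)\le 2s\le r/3$. The radii $s/2\ge 2\hat r$ and $4s\le 2r/3$ lie in $[\hat r,r]$, and $S\le s/16$ makes $B_z(s/2)$ disjoint from $B_o(s)$. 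So you do obtain $m(B_o(r_2))\ge\frac12(r_2/r_1)^{\eta'}m(B_o(r_1))$ with $\eta'=\ln(1+C_D^{-1}8^{-n})/\ln 3$, for every $n>0$.

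The gap is your final sentence. The inequality $\eta'\ge(2C_D)^{-21n}$ does not follow from $\ln(1+x)\ge x/2$ for all $n>0$, because it needs $C_D^{-1}\ge C_D^{-21n}$, i.e.\ $21n\ge1$. For $n\ge 1/21$ the comparison holds if you use $\ln(1+x)\ge x\ln 2$ on $[0,1]$; with your $x/2$ you need slightly more, e.g.\ $n\ge 1$. For $n<1/21$ it fails once $C_D$ is large, since $\eta'(2C_D)^{21n}\le 2^{21n}C_D^{21n-1}/\ln 3\to0$ as $C_D\to\infty$.

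No better argument can close this, because the statement with arbitrary $n>0$ is false. Since $[\hat r,r]$ is bounded and $B_o(r)$ is finite, $V(\hat r,r,C_D,n)$ holds for \emph{every} $n>0$ with $C_D:=\max_{x\in B_o(r)}m(B_x(r))/m(B_x(\hat r))$. Letting $n\downarrow0$ (so $\eta\uparrow1$), the conclusion would give $m(B_o(r/2))\ge \frac{r}{16\hat r}\,m(B_o(4\hat r))$. This fails, e.g., on $\ZZ$ with $b(k,k\pm1)=1$, $m(k)=(1+|k|)^{-1}$ and the intrinsic path metric given by $w(k,k+1)=(2(1+|k|\vee|k+1|))^{-1/2}$, so that $S=1/2$. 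There $m(B_0(R))$ grows only like $4\ln R$; take $\hat r=250$ and $r=2\cdot10^6$.

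So either keep your explicit exponent $\eta'$, or add a hypothesis such as $n\ge 1/21$. For instance $n\ge 1$ is automatic when doubling holds on all large scales of an infinite geodesic graph, by packing disjoint balls along a geodesic. With that restriction your proof is complete.
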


The next theorem is our second main result. It gives relative Faber-Krahn inequalities for sets with prescribed measure and a variable dimension.

\begin{theorem}\label{thm:main_hk_FK}Let $o\in X$, $n>0$, $r\geq 0$, $C\geq 1$, $\varepsilon\in(0,1)$, $q\colon [r,\infty)\to (0,\infty)$ monotone increasing with $q(t)\to\infty$ as $t\to\infty$, and
\[
A\geq  
\left((2C)^{3(2C)^{21n}}\cdot 1\vee  m(B(r)^{\frac{(2C)^{21n}}{q(r)}}\right)^\frac{q(r^2)}{q(r^2)-1}.
\] 
Assume \[
\mathcal{P}_{r}(t,o,o)
 \leq 
\frac{C}{\sqrt[\frac{q(t)}{q(t)-1}]{m(B(\sqrt t))}}
\]
for all $r^2\leq t\leq 2A^4r^2$ and
\[
\frac{m(B_x(s_2))}{m(B_x(s_1))}\leq C\left(\frac{s_2}{s_1}\right)^n
\]
for all $x\in B(2Ar)$ and $r\leq s_1\leq s_2\leq 2A^2r$.
Then, for all $U\subset B(r)$ satisfying $m(U)\geq \varepsilon m(B(r))$, we have 
\[
\lambda(U)
\geq 
\frac{C_n}{A^4r^2}
\left(
\frac{m(B(r))}{m(U)}\right)^{\frac{2}{N}},
\]
where $C_n=\frac{1}{C^{\frac{4}{n}}2^{5+\frac{4}{n}}}$ and 
\[
N\geq n\frac{q(r^2)}{q(r^2)-1}+ \frac{\ln\frac{1}{\epsilon}}{\ln A}+\frac{1}{q(r^2)-1}\ln(1\vee m(B(r))).
\]
\end{theorem}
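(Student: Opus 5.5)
The plan is to adapt Grigor'yan's classical argument for deriving Faber-Krahn from heat kernel bounds. The two inputs are the eigenvalue lower bound of Theorem~\ref{thm:ev_bound_abstract2} and the assumed averaged on-diagonal bound on $\mathcal{P}_r(t,o,o)$. One then chooses a time $t\in[r^2,2A^4r^2]$ adapted to the ratio $m(B(r))/m(U)$.

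\textbf{Step 1: eigenvalue bound at a fixed time.} Applying Theorem~\ref{thm:ev_bound_abstract2} with $r_1=r_2=r$ gives, for every $U\subset B(r)$ and every admissible $t$,
\[
2t\lambda(U)\geq \ln\left(m(U)^{-1+\frac1{2q(t)}}\,\mathcal P_r(t,o,o)^{-1}\right)\geq \ln\left(\frac{m(B(\sqrt t))^{\frac{q(t)-1}{q(t)}}}{C\,m(U)^{1-\frac1{2q(t)}}}\right).
\]
The aim is to make the argument of the logarithm at least $2$, so that $\lambda(U)\geq \ln 2/(2t)$.

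\textbf{Step 2: choosing the time via reverse doubling.} Set $\sqrt t = sr$ with $s\in[1,\sqrt2A^2]$. Proposition~\ref{prop:reversedoubling} applies, since the doubling hypothesis holds on $B(2Ar)$ at scales up to $2A^2r$. It gives
\[
m(B(sr))\geq \tfrac12 s^{\eta}m(B(r)),\qquad \eta=(2C)^{-21n}.
\]
It is this use of reverse doubling that produces the constant $(2C)^{3(2C)^{21n}}$ in the hypothesis on $A$. One also needs upward doubling, $m(B(sr))\leq Cs^n m(B(r))$, to control how far $s$ must be pushed.

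It is convenient to compare $m(B(\sqrt t))^{1-1/q}$ with $m(U)^{1-1/(2q)}$ by writing
\[
m(U)^{1-\frac1{2q}}\leq m(U)^{\frac{q-1}{q}}\,m(B(r))^{\frac1{2q}} .
\]
The leftover factor $m(B(r))^{1/(2q)}$ is what gives the term $\frac{1}{q(r^2)-1}\ln(1\vee m(B(r)))$ in $N$. Since $q$ is monotone, $q(t)\geq q(r^2)$, so these errors are controlled uniformly in $t$.

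The requirement then becomes
\[
\left(s^{\eta}\,\frac{m(B(r))}{m(U)}\right)^{\frac{q-1}{q}}\gtrsim C\, m(B(r))^{1/q}.
\]
Take $s$ minimal with this property, capped at $s\leq A^2$. Because $m(U)\geq\varepsilon m(B(r))$, the ratio $m(B(r))/m(U)$ lies in $[1,1/\varepsilon]$. The lower bound on $A$ is designed so that $s=A^2$ always suffices, which guarantees the chosen time stays in the allowed window.

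\textbf{Step 3: conversion to Faber-Krahn form and the main obstacle.} From Step 2 we get $\lambda(U)\geq c/(s^2r^2)$. The remaining task is to write $s^{-2}$ as
\[
A^{-4}\left(\frac{m(B(r))}{m(U)}\right)^{2/N}.
\]
This is where I expect the real difficulty: converting the logarithmic, threshold-type bound into a power law in $m(B(r))/m(U)$.

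I would split into two cases.
\begin{itemize}
\item If $m(U)$ is comparable to $m(B(r))$, use the trivial choice $s\sim A^2$ together with $(m(B(r))/m(U))^{2/N}\leq \varepsilon^{-2/N}$. The term $\ln(1/\varepsilon)/\ln A$ in $N$ ensures $\varepsilon^{-2/N}\leq A^{2}$-type bounds.
\item Otherwise, solve for $s$ explicitly. The exponent $n\frac{q}{q-1}$ in $N$ comes from inverting the volume growth through the factor $(\cdot)^{(q-1)/q}$. Here I would use the upward doubling at scale $s$, rather than $\eta$, to match the exponent $n$. The dimension loss is exactly the sum of the three error contributions.
\end{itemize}
Constants such as $C^{4/n}2^{5+4/n}$ should fall out of the $\ln 2$ threshold and the factor $C$ raised to the power $2/N\leq 4/n$.
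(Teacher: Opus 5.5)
\textbf{Where Step 3 breaks.} Steps 1 and 2 are sound, but Step 3 does not close as you propose it.

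In your first bullet, $s\sim A^2$ only yields $\lambda(U)\geq c/(A^4r^2)$. The right-hand side of the claim, however, can be as large as $C_nA^{-4}r^{-2}\varepsilon^{-2/N}$, and the term $\ln(1/\varepsilon)/\ln A$ in $N$ only guarantees $\varepsilon^{-2/N}\leq A^2$. Unless $m(B(r))/m(U)$ is bounded by an absolute constant, you lose up to a factor $A^2$.

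The second bullet does not repair this as stated. The doubling you invoke, $m(B(sr))\leq Cs^n m(B(r))$, is an upper bound on the volume. It can only show where the threshold of Step 1 fails; it never certifies that a given $s$ is admissible. What produces the exponent $n$ is doubling used as a lower bound on a smaller ball relative to a larger reference ball: $m(B(sr))\geq C^{-1}(s/A)^n m(B(Ar))\geq C^{-1}(s/A)^n m(B(r))$ for $1\leq s\leq A$. This is the paper's first case. There, $t_0\in[r^2,A^2r^2]$ is chosen so that the argument of the logarithm equals $\euler$, and the $\varepsilon$-term in $N$ guarantees $t_0\geq r^2$. Reverse doubling enters only in the second case, to pass from $B(r)$ to $B(Ar)$ when $m(U)/m(B(r))$ is not small.

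\textbf{How your approach closes.} For the statement as formulated, no power law is needed. Your single-time idea closes once you notice that the hypothesis on $A$ is calibrated for $s=A$, not $s=A^2$.

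Take $t=A^2r^2$ and $q=q(A^2r^2)\geq q(r^2)$. The paper's second case proves the estimate $(m(U)/m(B(Ar)))^{(q-1)/q}\leq\delta:=\bigl(\euler C^2(1\vee m(B(r))^{1/q(r^2)})\bigr)^{-1}$. Its derivation uses only $m(U)\leq m(B(r))$, reverse doubling from $r$ to $Ar$, and the lower bound on $A$, so it holds for every $U\subset B(r)$. Together with $m(U)^{1/(2q)}\leq 1\vee m(B(r))^{1/q(r^2)}$, it gives
\[
\frac{m(B(Ar))^{\frac{q-1}{q}}}{C\,m(U)^{1-\frac{1}{2q}}}\geq\frac{1}{C\delta\,m(U)^{\frac{1}{2q}}}\geq \euler C\geq\euler,
\qquad\text{hence}\qquad
\lambda(U)\geq\frac{1}{2A^2r^2}.
\]
Since $m(B(r))/m(U)\leq 1/\varepsilon$ and $N\geq\ln(1/\varepsilon)/\ln A$, you have $(m(B(r))/m(U))^{2/N}\leq A^2$. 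Combined with $C_n\leq 2^{-5}$, the claim follows with no case distinction.

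\textbf{Comparison with the paper.} This argument is shorter and uses the averaged bound only at the single time $A^2r^2$. The paper's Grigor'yan-type optimisation proves more than the statement records: a genuine power law with prefactor $A^{-2}$ for sets of small relative measure. Your route shows that, because of the $\ln(1/\varepsilon)/\ln A$ term in $N$, the power-law factor in the stated inequality can be absorbed into a single factor $A^2$.
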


\begin{proof}Note that $A\geq 1$. Set 
\[\delta:=\frac{1
}{\euler C^21\vee ( m(B(r))^{1/q(r^2)}}\in(0,1).\] 
Fix $U\subset B(r)$ satisfying $m(U)\geq \epsilon m(B(r))$. 
\\

\noindent
\underline{$1^{\mathrm{st}}$ case:}
\[
\left(\frac{m(U)}{m(B(r))}\right)^{\frac{q(r^2)-1}{q(r^2)}}\leq \delta.
\] 
For all $t\in[r^2,A^2r^2]$, volume doubling yields
\begin{multline*}
\mathcal{P}_{r}(t,o,o)
\leq \frac{C}{m(B(\sqrt t))^\frac{q(t)-1}{q(t)}}
\\\leq C{C}^\frac{q(t)-1}{q(t)}\left(\frac{1}{m(B(Ar))}\left(\frac{Ar}{\sqrt t}\right)^{n}\right)^{\frac{q(t)-1}{q(t)}}
\leq C^2\left(\frac{1}{m(B(r))}\left(\frac{Ar}{\sqrt t}\right)^{n}\right)^{\frac{q(t)-1}{q(t)}}.
\end{multline*}
For all $t\in[r^2,A^2r^2]$, we have \[n\frac{q(r^2)-1}{q(r^2)}\le n\frac{q(t)-1}{q(t)} \le n \frac{q(A^2r^2)-1}{q(A^2r^2)}\leq n \leq n'=\frac{q(r^2)-1}{q(r^2)}N.\]
Hence, for all $t\in[r^2,A^2r^2]$, we get 
\begin{multline*}
m(U)^{\frac{q(t)-1}{q(t)}+\frac{1}{2q(t)}}P_{r}(t,o,o)
\leq 
C^21\vee(m(B(r)))^{\frac{1}{q(r^2)}}\left(\frac{m(U)}{m(B(r))}\right)^\frac{q(t)-1}{q(t)}\left(\frac{Ar}{\sqrt t}\right)^{n\frac{q(t)-1}{q(t)}}
\\
\leq 
\frac{1}{\euler\delta}\left(\frac{m(U)}{m(B(r))}\right)^{\frac{q(r^2)-1}{q(r^2)}}\left(\frac{Ar}{\sqrt t}\right)^{n'}
\end{multline*}
where we used $m(U)/m(B(r))\le 1$ and $r\geq \sqrt t$ in the last estimate.
Plugging these two estimates into the lower bound of $\lambda(U)$ obtained in Lemma~\ref{lem:ev_bound_abstract1} implies
\begin{multline*}
\lambda(U)
\geq \sup_{A^2r^2\geq t\ge r^2}\ 
\frac{1}{2t}
\ln\left(m(U)^{\frac{q(t)-1}{q(t)}+\frac{1}{2q(t)}}\mathcal{P}_{r}(t,o,o)\right)^{-1}
\\
\geq 
\eat{\sup_{A^2r^2\geq  t\ge r^2}
\frac{1}{2t}
\ln
\left(
\frac{CC_D }{1\wedge(\epsilon m(B(r)))^{\frac{1}{q(r)}}}
\left(
\frac{m(U)}{m(B(r))}
\right)^{\frac{q(r)-1}{q(r)}}
\left(
\frac{Ar}{\sqrt t}
\right)^{n'}
\right)^{-1}
\\=}
\sup_{A^2r^2\geq  t\ge r^2}
\frac{1}{2t}
\ln\left(
\euler\delta
\left(\frac{m(B(r))}{m(U)}\right)^{\frac{q(r^2)-1}{q(r^2)}}\left(\frac{\sqrt t}{Ar}\right)^{n'}\right)
\end{multline*}
Set
\[
t_0=\left(\frac{1}{\delta}\left(\frac{m(U)}{m(B(r))}\right)^{\frac{q(r^2)-1}{q(r^2)}}\right)^{\frac{2}{n'}}A^2r^2
\]
The time $t=t_0$ satisfies $t_0\leq A^2r^2$. Moreover, $t_0\geq r^2$. Indeed, since $\delta\leq 1$, $\tfrac{m(U)}{m(B(r))}\geq \varepsilon$, 
 and 
$n'\geq \frac{\ln\frac{1}{\varepsilon}}{\ln A}$,
we have
\[
t_0\geq \varepsilon^{\frac{q(r^2)-1}{q(r^2)}\frac{2}{n'}}A^2 r^2
\geq r^2.
\]
Hence, we can plug $t_0$ into the supremum in the lower bound for $\lambda(U)$. Recall the definition of $\delta=1/(\euler C^21\vee ( m(B(r))^{1/q(r^2)})$
to obtain
\begin{multline*}
\lambda(U)\geq 
\frac{\delta^{\frac{2}{n'}}}{2A^2r^2}\left(\frac{m(B(r))}{m(U)}\right)^{\frac{2}{n'}\frac{q(r^2)-1}{q(r^2)}}
= \frac{(\euler C^2)^{-\frac{2}{n'}}}{2A^2r^2}\left(\frac{m(B(r))}{m(U)}\right)^{\frac{2}{n'}\frac{q(r^2)-1}{q(r^2)}}
\frac{1}{1\vee ( m(B(r))^{2/n'q(r^2)}}
\end{multline*}
Since $n'\geq n$ we have for the first factor 
\[
\frac{(\euler C^2)^{-\frac{2}{n'}}}{2A^2r^2}\geq \frac{(2^2 C^2)^{-\frac{2}{n}}}{2A^2r^2}=\frac{C^{-\frac{4}{n}}2^{-1-\frac{4}{n}}}{A^2r^2}.
\]
Regarding the last factor, since
$n'\geq \frac{1}{q(r^2)}\ln(1\vee m(B(r)))$, we have
\[
\frac{1}{1\vee ( m(B(r))^{2/n'q(r^2)}}
\geq \frac{1}{1\vee ( m(B(r))^{2/\ln(1\vee m(B(r)))}}
=\frac{1}{\euler^2}\geq \frac1{2^4}
\]
Hence, by setting $a(n):=C^{-\frac{2}{n}}2^{-5-\frac{4}{n}}$, we get 
\[
\lambda(U)\geq 
\frac{a(n)}{A^2r^2}\left(\frac{m(B(r))}{m(U)}\right)^{\frac{2}{n'}\frac{q(r^2)-1}{q(r^2)}}=\frac{a(n)}{A^2r^2}\left(\frac{m(B(r))}{m(U)}\right)^{\frac{2}{N}}.
\]

\noindent
\underline{$2^\mathrm{nd}$ case:}
\[
\left(\frac{m(U)}{m(B(r))}\right)^{\frac{q(r^2)-1}{q(r^2)}}>\delta.
\]
Note \[A\geq\left(2\euler C^2 1\vee m(B(r))^{1/q(r^2)}\right)^{(2C)^{21n} \frac{q(r^2)} {q(r^2)-1}}\geq\left(\frac{2}{\delta^{\frac{q(r^2)} {q(r^2)-1}}}\right)^{1/\eta}\] where $\eta=(2C)^{-21n}$. 
By Proposition~\ref{prop:reversedoubling}, we have the reverse volume doubling property for radii up to $A^2r\geq Ar\geq r$.
Since $m(U)\leq m(B(r))$ and $\delta\in(0,1)$, we obtain
\begin{multline*}
\left(\frac{m(U)}{m(B(Ar))}\right)^{\frac{q(A^2r^2)-1}{q(A^2r^2)}}
\leq 
\left(\frac{m(U)}{\frac12A^\eta m(B(r))}\right)^{\frac{q(A^2r^2)-1}{q(A^2r^2)}}
\\\leq
\left(\delta^{\frac{q(r^2)} {q(r^2)-1}} \frac{m(U)}{m(B(r))}\right)^{\frac{q(A^2r^2)-1}{q(A^2r^2)}}
\leq\delta^{\frac{q(r^2)} {q(r^2)-1}\frac{q(A^2r^2)-1}{q(A^2r^2)}}\leq\delta
\end{multline*}
Hence, the second case can be handled by following the first case line by line and replacing the involved radii appropriately.
The volume doubling condition and the estimate on $\mathcal{P}$ hold for radii and times up to $A^2r$ and $A^4r^2$, respectively. We carry out the estimates involving $\mathcal{P}$ where $r$ is replaced by $Ar$ to get a lower bound on $\lambda(U)$. We use the bound
\[
n'\geq {\frac{1}{\ln{A}}}\ln\left(\frac{1}{\varepsilon}\right)\geq {\frac{1}{\ln{A}}}\frac{q(A^2r^2)-1}{q(A^2r^2)}\ln\left(\frac{1}{\varepsilon}\right)\geq {\frac{1}{\ln{A}}}\ln\left(\frac{\delta}{\varepsilon^{\frac{q(A^2r^2)-1}{q(A^2r^2)}}}\right)
\]  in order to control the corresponding value for $t_0$. These adaptions imply 
\[
\lambda(U)\geq 
\frac{\delta^{\frac{2}{n'}}}{2A^4r^2}\left(\frac{m(B(Ar))}{m(U)}\right)^{\frac{2}{n'}\frac{q(A^2r^2)-1}{q(A^2r^2)}}\geq 
\frac{a(n)}{A^4r^2}\left(\frac{m(B(r))}{m(U)}\right)^{\frac{2}{N}}
\]
where we used $A\geq1$ in the last estimate.
This yields the claim.
\end{proof}

Theorem~\ref{thm:hk_FK_global} is the  global version of Theorem~\ref{thm:main_hk_FK}. It is formulated as follows.

\begin{theorem}
Let $r_0,C>0$. Assume that we have 
\[
\mathcal{P}_{r}(t,x,y)
\\
 \leq 
C
\frac{(1\vee S^{-2}(\sqrt{\rho(B_x(r),B_y(r))^2S^2+t^2}-t))^{\frac{n}{2}}}{\sqrt[\frac{2q(t)}{q(t)-1}]{m(B_x(\sqrt t))m(B_y(\sqrt t))}}
 \euler^{-tS^{-2}\zeta(\rho(B_x(r),B_y(r))S/t)}
\]
for all $t\geq r^2\geq r_0^2$ and $x,y\in X$, 
and the volume comparison estimate
\[
\frac{m(B_x(R))}{R^{n}}
\leq C 
\frac{m(B_x(r))}{r^{n}}, \quad r_1\leq r\leq R,
\]
for all $x\in X$.
Then, for all $\varepsilon\in(0,1)$, $r\geq 1\vee r_0$, and $U\subset B(r)$ satisfying $m(U)\geq \varepsilon m(B(r))$, we have 
\[
\lambda(U)
\geq 
\frac{C_n'}{r^{2+\varepsilon}}
\left(
\frac{m(B(r))}{m(U)}\right)^{\frac{2}{N}}\left[1\wedge \frac{1}{m(B(r))}\right] ^{\frac{1}{C_n'q(r^2)}}
\]
where $C_n'=C_n'(C,n)>0$ and 
\[
N\geq n\frac{q(r^2)}{q(r^2)-1}+ \frac{\ln \frac{1}{\varepsilon}}{\varepsilon}\frac{1}{\ln r}+\frac{1}{q(r^2)-1}\ln(1\vee m(B(r)))
\]
\end{theorem}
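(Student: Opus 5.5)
The plan is to deduce the statement from Theorem~\ref{thm:main_hk_FK} applied with centre $o$ equal to the centre of the ball $B(r)$ containing $U$, and with the scale parameter $A$ chosen as a suitable power of $r$. This choice turns the prefactor $A^{-4}r^{-2}$ into $r^{-2-\varepsilon}$, up to a factor depending on $m(B(r))$, and turns the term $\ln\frac1\varepsilon/\ln A$ in $N$ into the term $\frac{1}{\varepsilon}\ln\frac1\varepsilon\cdot\frac1{\ln r}$.

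First I verify the two hypotheses of Theorem~\ref{thm:main_hk_FK}.

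\emph{On-diagonal bound.} Set $x=y=o$ in the assumed averaged Gaussian bound. Then $\rho(B_o(r),B_o(r))=0$, so the polynomial factor equals $1$ and the exponential factor equals $1$. The bound therefore reads
\[
\mathcal{P}_r(t,o,o)\le \frac{C}{m(B(\sqrt t))^{\frac{q(t)-1}{q(t)}}}
\]
for all $t\ge r^2\ge r_0^2$, with no upper limit on $t$. This gives the bound on $\mathcal{P}$ for $r^2\le t\le 2A^4r^2$.

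\emph{Volume doubling.} The assumed volume comparison gives
\[
\frac{m(B_x(s_2))}{m(B_x(s_1))}\le C\left(\frac{s_2}{s_1}\right)^n
\]
for all $x$ and all $s_2\ge s_1\ge r_0$. This covers the range required in Theorem~\ref{thm:main_hk_FK}, and it also supplies the hypothesis of Proposition~\ref{prop:reversedoubling} used inside that proof.

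\emph{Choice of $A$.} Theorem~\ref{thm:main_hk_FK} needs
\[
A\ge \Big(K\cdot\big(1\vee m(B(r))^{K'/q(r)}\big)\Big)^{\frac{q(r^2)}{q(r^2)-1}},
\qquad K=(2C)^{3(2C)^{21n}},\quad K'=(2C)^{21n}.
\]
I take
\[
A=\max\Big\{r^{\varepsilon/4},\ \big(K(1\vee m(B(r)))^{K'/q(r^2)}\big)^{\frac{q(r^2)}{q(r^2)-1}}\Big\}.
\]
I use monotonicity of $q$ to replace $q(r)$ by $q(r^2)$, which is valid once $r\ge1$. Then
\[
A^4\le r^{\varepsilon}\cdot K^{c}\cdot\big(1\vee m(B(r))\big)^{c/q(r^2)}
\]
for a constant $c=c(C,n)$, using that $\frac{q}{q-1}\le2$ at large scales. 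Inserting this into the conclusion of Theorem~\ref{thm:main_hk_FK} gives
\[
\lambda(U)\ge \frac{C_n'}{r^{2+\varepsilon}}\left(\frac{m(B(r))}{m(U)}\right)^{2/N}\left[1\wedge\frac1{m(B(r))}\right]^{\frac{1}{C_n'q(r^2)}}.
\]

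\emph{Dimension.} The middle term in the admissible $N$ of Theorem~\ref{thm:main_hk_FK} is
\[
\frac{\ln(1/\varepsilon)}{\ln A}\le\frac{4\ln(1/\varepsilon)}{\varepsilon\ln r}.
\]
Any $N$ exceeding the stated expression is therefore admissible, after absorbing the factor $4$ into the constants or harmlessly enlarging $N$. A larger $N$ only weakens the bound, because $m(B(r))/m(U)\ge1$. The remaining two terms of $N$ match the statement directly.

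\emph{Main obstacle.} The delicate step is the bookkeeping in the choice of $A$. It must be large enough for the threshold of Theorem~\ref{thm:main_hk_FK}, including the reverse doubling range, and the doubling must hold up to radius $2A^2r$, which is automatic since the doubling is global. At the same time $A^4$ may cost only $r^{\varepsilon}$ times a power of $m(B(r))^{1/q(r^2)}$.

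Two cases need separate care. First, when $m(B(r))<1$, the factor $1\vee m(B(r))$ equals $1$, so the bracket $[1\wedge 1/m(B(r))]$ is not needed. Second, for small $r$ one may have $q(r^2)\le1$; then I enlarge $r_0$ (equivalently restrict to $r$ with $q(r^2)\ge2$) and treat the finitely many intermediate scales by adjusting $C_n'$. If a uniform constant cannot be obtained this way, I would instead state the result for $r\ge r_0'$ with $q(r_0'^2)\ge2$.
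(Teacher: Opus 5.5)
Your proposal is the paper's proof: the paper likewise applies Theorem~\ref{thm:main_hk_FK} at the centre of $B(r)$ with $A=r^{\varepsilon/4}\vee(\text{threshold})$, so that $A^{-4}\ge r^{-\varepsilon}$ up to a constant and a factor $(1\wedge m(B(r))^{-1})^{c/q(r^2)}$. You are in fact more careful than the paper about the exponent $q(r^2)/(q(r^2)-1)$ in the threshold, which the paper drops; however, your switch from $q(r)$ to $q(r^2)$ there is not justified by monotonicity, which points the wrong way when $m(B(r))>1$ (the $q(r)$ is simply a typo, as the choice of $\delta$ in the proof of Theorem~\ref{thm:main_hk_FK} shows).

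One caveat, which the paper's one-line proof passes over silently, is the factor $4$ in $\ln(1/\varepsilon)/\ln A\le 4\ln(1/\varepsilon)/(\varepsilon\ln r)$: it cannot be absorbed into $C_n'$, since lowering $N$ to the stated value changes $(m(B(r))/m(U))^{2/N}$ by a factor that, given only $m(U)\ge\varepsilon m(B(r))$, is controlled merely by $r^{3\varepsilon/2}$ and not by a constant. So this route actually yields the statement with $\frac{4}{\varepsilon}\ln\frac{1}{\varepsilon}\cdot\frac{1}{\ln r}$ in $N$, or with the stated $N$ but prefactor $r^{-2-4\varepsilon}$ if you take $A=r^{\varepsilon}\vee(\text{threshold})$ instead.
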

\begin{proof}
Choose
\[
A= r^{\frac{\varepsilon}4}\vee {(2C)^{3(2C)^{21n}}}\cdot {1\vee ( m(B(r))^{\frac{(2C)^{21n}}{q(r^2)}}}
\] 
to obtain for $r\geq 1$
\begin{multline*}
\lambda(U)
\geq 
\left[\frac{1}{r^\varepsilon}\wedge \frac{1}{(2C)^{12(2C)^{21n}} 1\vee m(B(r))^{\frac{4(2C)^{21n}}{q(r^2)}}}
\right]
\frac{C_n}{r^2}
\left(
\frac{m(B(r))}{m(U)}\right)^{\frac{2}{N}}
\\
\geq 
\left[\frac{1}{(2C)^{12(2C)^{21n}} 1\vee m(B(r))^{\frac{4(2C)^{21n}}{q(r^2)}}}\right]
\frac{C_n'}{r^{2+\varepsilon}}
\left(
\frac{m(B(r))}{m(U)}\right)^{\frac{2}{N}},
\end{multline*}
This yields the claim. 
\end{proof}

\begin{remark}Alternatively to our proof strategy of Theorem~\ref{thm:main_hk_FK}, one might be tempted to mimick the pointwise approach by estimating the averaged semigroup in a subset with a covering of  balls of least possible radius and use a Vitali covering argument. This leads indeed to a much worse estimate involving the ratio of diameter and least radius which cannot be mitigated within the present general strategy.
\end{remark}

{\small

\bibliographystyle{alpha}
\bibliography{Davies_ref.bib}
}

\end{document}